\documentclass{amsart}
\usepackage[utf8]{inputenc}
\usepackage[margin=1.3in]{geometry}
\usepackage{amsmath,amsthm,amsfonts,amssymb}
\usepackage[all]{xy}
\usepackage{color}
\usepackage{xparse}
\usepackage{verbatim}
\usepackage{aliascnt}
\usepackage[colorlinks,citecolor=blue,linkcolor=blue,urlcolor=blue,filecolor=blue]{hyperref}
\usepackage{bbm}
\usepackage{bbold}
\usepackage{enumerate}
\usepackage{tikz-cd}
\usepackage{amsthm}

\theoremstyle{definition}
\newtheorem{definition}{Definition}[section]
\newtheorem*{example}{Example}
\newtheorem*{remark}{Remark}

\theoremstyle{plain}
\newtheorem{lemma}[definition]{Lemma}
\newtheorem{theorem}[definition]{Theorem}

\newtheorem{cor}[definition]{Corollary}
\newtheorem{prop}[definition]{Proposition}

\newtheorem*{theorem*}{Theorem}
\newtheorem*{prop*}{Proposition}
\newtheorem*{cor*}{Corollary}

\newtheorem{introthm}{Theorem}

\newcommand{\thh}{\mathrm{THH}}
\newcommand{\Map}{\mathrm{Map}}

\newcommand{\colim}{\mathrm{colim}}
\newcommand{\Sp}{\mathrm{Sp}}
\newcommand{\Mod}{\mathrm{Mod}}
\newcommand{\CycSp}{\mathrm{CycSp}}
\newcommand{\stMod}{\mathrm{stMod}}

\DeclareMathOperator{\psl}{PSL}
\DeclareMathOperator{\pgl}{PGL}
\DeclareMathOperator{\spl}{SL}
\DeclareMathOperator{\gl}{GL}
\DeclareMathOperator{\fun}{Fun}

\newcommand{\tc}{\mathrm{TC}}
\newcommand{\cyclicorbit}{\mathcal{O}_{\mathcal{F}_\mathrm{cyc}}}

\usepackage[mathscr]{euscript}

\title{The algebraic K-theory of $k[\spl_2(\mathbb{F}_q)]$}
\author{Isaac Moselle}
\address{Department of Mathematical Sciences, University of Copenhagen, Denmark}
\email{ismo@math.ku.dk}

\subjclass[2020]{Primary 19D50; Secondary 19D55, 20C20, 16S34}
\keywords{higher algebraic $K$-theory, topological cyclic homology, trace methods, cyclic assembly, modular representation theory, group rings}

\begin{document}
\begin{abstract}
    We compute via trace methods the higher algebraic $K$-theory of the group ring $k[\spl_2(\mathbb{F}_q)]$, as well as the related groups  $\psl_2(\mathbb{F}_q)$, $\pgl_2(\mathbb{F}_q)$, and $\gl_2(\mathbb{F}_q)$, where $k$ is a perfect field of characteristic $p$ and $q=p^r$. At the core of the computation is the algebraic $K$-theory of the group ring of the Sylow $p$-subgroup, $k[C_p^r]$, which we determine via a theorem of L\"uck--Reich--Rognes--Varisco on cyclic assembly for topological cyclic homology. In the process, we reprove the cyclic assembly result in the language of Nikolaus--Scholze, analyse assembly for smaller families of subgroups, and develop further tools for computing topological cyclic homology of group rings.
\end{abstract}

\maketitle

\section*{Introduction}

Modular representation theory studies modules over group rings $k[G]$, where $k$ is a field of characteristic $p>0$ and $G$ is a finite group. When $p$ divides the order of $G$, the resulting category exhibits a rich structure related to the $p$-local subgroup structure of $G$. This raises the natural question of whether the algebraic $K$-theory of $k[G]$ admits explicit computation. The groups $K_i(k[G])$ are well understood for $i=0,1$. By work of Brauer, the Grothendieck group $K_0(k[G])$ is free of rank equal to the number of $k$-conjugacy classes of $p'$-elements in $G$, and plays a fundamental role in the theory of Brauer characters. The group $K_1(k[G])$ identifies with the abelianisation of the group of units of $k[G]$ (\cite[Lemma III.1.4]{weibel2013k}), investigated in \cite{magurn2006explicit, magurn2007explicit}. For higher $i$, however, very few explicit computations of higher $K$-groups of modular group rings are known (although see \cite{vogeli2025derived} for results when the Sylow $p$-subgroup is cyclic of order $p$).

In this paper, we compute the $p$-torsion in the higher algebraic $K$-theory of $k[G]$, for $G$ one of the groups
\[
\psl_2(\mathbb{F}_q), \quad \pgl_2(\mathbb{F}_q), \quad \spl_2(\mathbb{F}_q), \quad \gl_2(\mathbb{F}_q),
\]
and $k$ a perfect field of characteristic $p$ with $q=p^r$. These groups play a foundational role in finite group theory and provide a natural testing ground for explicit calculations. From this computation, we obtain the following description of $K_i(k[G])$.

\begin{introthm}\label{intro-thm-a}
    Suppose that $G$ is one of the groups above and $k$ is a perfect field of characteristic $p$, where $q=p^r$. Then
    \[ K_i(k[G])\cong k^{n_i} \oplus K_i(k[G]/J) \] 
   where $J$ is the Jacobson radical and $k[G]/J$ the semisimple quotient. The value $n_i$ is given by the coefficient of $x^i$ in the Hilbert series
   \[ C(1+x+x^2+\dots)^{r-1}(x+x^3+x^5+\dots) \]
    where the constant $C$ has values
    \begin{itemize}
        \renewcommand\labelitemi{--}
        \item $C=2$ for $G=\psl_2(\mathbb{F}_q)$,
        \item $C=1$ for $G=\pgl_2(\mathbb{F}_q)$,
        \item $C=4$ for $G=\spl_2(\mathbb{F}_q)$,
        \item $C=q-1$ for $G=\gl_2(\mathbb{F}_q)$.
    \end{itemize}
\end{introthm}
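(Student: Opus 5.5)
We will compute $K_*(k[G])$ using trace methods, reducing the problem to the relative term for the surjection $k[G]\to k[G]/J$. Since $k[G]/J$ is a product of matrix algebras over finite extensions of $k$, its $K$-theory is known; Quillen's computation shows it has no $p$-torsion in positive degrees. By Dundas--Goodwillie--McCarthy, the relative $K$-theory of the nilpotent extension $k[G]\to k[G]/J$, after $p$-completion, equals relative $\tc$. As a first step we therefore write
\[ K(k[G])^\wedge_p \simeq \tc(k[G])^\wedge_p \times_{\tc(k[G]/J)} K(k[G]/J)^\wedge_p , \]
and reduce to computing $\tc(k[G])$ relative to $\tc(k[G]/J)$. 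Here $\tc(k[G]/J)$ is a sum of copies of $\tc$ of perfect fields, which is concentrated in degrees $-1,0$. A splitting argument then yields the decomposition $K_i\cong k^{n_i}\oplus K_i(k[G]/J)$ for $i\ge 1$. This argument needs the relative groups to be $k$-vector spaces; we expect to get this from the fact that the nontrivial part comes from $\thh$ of $k[P]$, which is a $k$-module with Frobenius-semilinear structure.

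Next we use the splitting $\thh(k[G])\simeq \thh(k)\otimes \Sigma^\infty_+ L BG \simeq \bigoplus_{[g]} \thh(k)\otimes \Sigma^\infty_+ BC_G(g)$, indexed by conjugacy classes, together with cyclic assembly (L\"uck--Reich--Rognes--Varisco, which we reprove in Nikolaus--Scholze language). After $p$-completion, only the $p$-parts matter. Classes with $p'$-order components contribute exactly $\tc(k[G]/J)$-type pieces, since for $p'$-groups $BC$ is $p$-adically trivial. The interesting contributions come from conjugacy classes whose $p$-part is a nontrivial unipotent element $u$. Its centralizer is $Z(G)\times U$ up to a $p'$ index, where $U\cong C_p^r$ is the Sylow $p$-subgroup. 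This reduces the computation to $\tc$ of $k[C_p^r]$, twisted by the action of the normalizer torus on the nontrivial elements of $U$.

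For $k[C_p^r]$ we apply the cyclic assembly formula. $\tc(k[A])$ for $A$ elementary abelian decomposes over cyclic subgroups into pieces $\tc(k)\otimes BA$-type terms, whose reduced homotopy is computed from $H_*(BC_p^r;k)$. The Poincaré series of that homology is $(1+x)^r/(1-x)^r$ up to exterior/polynomial generators. The fixed points and Frobenius analysis turn each nontrivial cyclic subgroup $\langle u\rangle$ into a contribution with Hilbert series $(1+x+\cdots)^{r-1}(x+x^3+\cdots)$. Here the factor $(x+x^3+\cdots)$ arises from $\tc$ of the $\langle u\rangle$-summand: the orbit of $u$ under $S^1$, where homology of $BS^1$-related terms yields odd degrees. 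The factor $(1+x+\cdots)^{r-1}$ comes from the remaining $B(C_p^r/\langle u\rangle)$ directions, with $k$-dimension $1$ per degree, tensored over $k$.

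Finally, the constant $C$ is the number of $G$-conjugacy classes of pairs consisting of a nontrivial unipotent class with its central $p'$-twist, each weighted appropriately. There are 2 unipotent classes in $\spl_2$ and $\psl_2$ ($p$ odd) and 1 in $\pgl_2$ and $\gl_2$; the central elements give factors $2$, $1$, $1$, $q-1$ respectively. This yields $C=4,2,1,q-1$, after checking that the centralizer in each case is $Z\times U$ and that the normalizer action on $U$ produces exactly the stated rank. We expect the main obstacle to be the $\tc$ computation of a single summand $\thh(k)\otimes BC_G(g)_{hC}$ with the correct Frobenius. In particular, we must verify that the Hilbert series collapses to $(1-x)^{-(r-1)}\cdot x/(1-x^2)$ and that there are no extension problems, so that the result is a $k$-vector space. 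For this we would first try the Tate orbit lemma, which reduces the summand to homotopy orbits computed by an Atiyah--Hirzebruch spectral sequence with $k$-coefficients.
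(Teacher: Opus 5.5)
Your reduction from $K$ to $\tc$ (Dundas--Goodwillie--McCarthy plus splitting off $K(k[G]/J)$) matches the paper. The central step fails, though. You discard the $p'$-classes because ``for $p'$-groups $BC$ is $p$-adically trivial''. That fails in two ways.

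First, centralisers of $p'$-elements need not be $p'$-groups: $C_G(1)=G$, and $C_G(z)=G$ for every central $z$. Second, the decomposition of $LBG$ into individual classes is not a decomposition of spaces with $p$-Frobenius lift, because $p$ sends $[zu]$ to $[z^p]$. So $\tc$ does not split along classes. One only has the filtration of Proposition~\ref{tc-ss}, whose bottom piece contains the summand $\tc(\thh(k)\otimes\mathbb{S}[BG];p)$. This summand is the assembly term $C_*(BG;\tc(k))$, with $\pi_n\cong H_n(BG;\mathbb{Z}_p)\oplus H_{n+1}(BG;\tc_{-1}(k))$ in positive degrees.

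Already for $G=C_p$ and $k$ finite, this piece has $\pi_{2n}\cong H_{2n+1}(BC_p;\mathbb{Z}_p)\cong\mathbb{Z}/p$, while $\tc_{2n}(k[C_p];p)=0$. The group homology is killed only by the connecting map from the $p$-singular part, and the natural map from $\tc$ to that part ($\tc^W$) is not an isomorphism. Proving this cancellation is the heart of the paper:
\begin{itemize}
\item Propositions~\ref{split-inj-ass} and~\ref{bound-surj-ass} show that $H_n(BC_p^r;\mathbb{Z}_p)\to\tc_n$ is split injective and that the boundary surjects onto $H_n(BC_p^r;\tc_{-1}(k))$. Both are deduced from rank one by cyclic assembly.
\item A $K_0(\mathbb{F}_p[H])$ count (Proposition~\ref{k-fin-ass-iso}) or, for infinite $k$, a cardinality count then gives only an abstract isomorphism $\tc_n\cong\tc^W_n$.
\item To use this for $G$, the paper first passes to the Borel subgroup (Proposition~\ref{red-to-borel}). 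It then uses the Frobenius-group splitting and the free torus action (Propositions~\ref{tc-frob-groups} and~\ref{free-action}).
\end{itemize}
Working on $G$ directly, you would have to cancel $H_*(BG;\mathbb{Z}_p)$ itself, and you give no mechanism for that. The same exact sequence is also where exponent $p$ comes from. $\tc$ is only a $\tc(k)$-module, not a $k$-module, and for infinite $k$ the paper gets $k^{n_i}$ only as an $\mathbb{F}_p$-vector space by cardinality. An appeal to a ``Frobenius-semilinear $k$-structure'' does not supply this.

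Your derivation of the series is also heuristic, and as stated it is wrong. One copy of $k$ per odd degree per nontrivial $u$ predicts $\tc_{2n-1}(k[C_p];p)\cong k^{p-1}$. Counting per cyclic subgroup, as you write, would even undercount by a factor $p-1$. But Hesselholt--Madsen, as quoted in the preliminaries of Section 3, give $k^{n(p-1)}$. Each nontrivial $u\in C_p^r$ contributes $\pi_*\Sigma(\thh(k)\otimes\mathbb{S}[L_{\{u\}}BC_p^r])_{hS^1}$, with series $(1+x+\cdots)^{r-1}(x+2x^3+3x^5+\cdots)$. Feeding this same input into Proposition~\ref{tcw-calc} gives $(p^r-1)(1+x+\cdots)^{r-1}(x+2x^3+3x^5+\cdots)$. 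So the displayed series appears to have lost a factor $1+x^2+x^4+\cdots$; check the target against $r=1$ rather than tuning a heuristic to match it.

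One part of your proposal is worth keeping. Reading $C$ as the number of $p$-singular classes $[zu]$, each with centraliser $Z(G)\times U$, explains the constants $4,2,1,q-1$ correctly once the cancellation above is established. Minor point: $H_*(BC_p^r;\mathbb{F}_p)$ has Poincar\'e series $(1-x)^{-r}$, not $(1+x)^r/(1-x)^r$.
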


The ring $k[G]/J$ is Morita equivalent to a product of perfect fields (see Lemma \ref{artin-wed}). It follows that $K_i(k[G]/J)$ is a uniquely $p$-divisible group (\cite[Theorem 5.4]{hiller1981lambda}), which is computable in many cases. For example, if $k$ is finite then $k[G]/J$ is Morita equivalent to a product of finite fields, so Theorem \ref{intro-thm-a} combined with Quillen's computation (\cite[Theorem 8]{quillen1972cohomology}) produces a complete description of $K_i(k[G])$. Since $K_i(k[G]/J)$ only relies on the semisimple quotient, it is the $p$-torsion component that is of primary interest from the perspective of group theory.

Our calculation is entirely based on the framework of trace methods. Trace methods approximate algebraic $K$-theory via invariants arising from Hochschild homology that are more amenable to computation---in particular, topological cyclic homology ($\tc(-;p)$). This approach is very effective for finite dimensional algebras over perfect fields. Indeed, for such an algebra $A$, the map $K_i(A) \xrightarrow{} \tc_i(A;p)$ exhibits $\tc_i(A;p)$ as the $p$-torsion component of $K_i(A)$. The $\tc$ calculations have been performed for truncated polynomials in a single variable (\cite{hesselholt1997cyclic, speirs2020k}), some cases of truncated polynomials in many variables (\cite{angeltveit2014algebraic}), and for coordinate-axis algebras (\cite{hesselholt2007k, speirs2021k}). 

We proceed, then, by computing the topological cyclic homology of $k[G]$. Our approach, mirroring techniques in group cohomology, is to first reduce the problem to the Sylow $p$-subgroup $C_p^r$. Accordingly, we examine the topological cyclic homology of the elementary abelian group ring $k[C_p^r]$, determining these groups alongside the action induced by certain normalisers. Because $k[C_p^r] \cong k[x_1,\ldots,x_r]/(x_1^p,\ldots,x_r^p)$, this calculation is equivalent to computing $\tc$ of a truncated polynomial algebra where all exponents are $p$. From this perspective, our explicit determination complements \cite[Theorem 1.3]{angeltveit2014algebraic} via entirely different methods, and is therefore of independent interest.

\begin{introthm}\label{intro-thm-b}
    For $k$ a perfect field of characteristic $p$, we have for $i>0$
    \[ \tc_i(k[C_p^r];p) \cong k^{n_i} \]
    where the $n_i$ are given by the Hilbert series
    \[ (p^r-1)(1+x+x^2+\dots)^{r-1}(x+x^3+x^5+\dots). \]
\end{introthm}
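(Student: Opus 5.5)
We use the cyclic assembly theorem of Lück--Reich--Rognes--Varisco. For a finite abelian $p$-group $G=C_p^r$, it identifies $\tc(k[G];p)$, after $p$-completion, with a sum over conjugacy classes of cyclic subgroups $C\le G$ of contributions built from $\tc(k[C];p)$ together with the homotopy orbits of the Weyl group $G/C$ (here the centraliser is all of $G$). Explicitly, I would aim for the splitting
\[
\tc(k[G];p)\simeq \tc(k;p)\otimes BG_+ \;\oplus\; \bigoplus_{1\neq C\le G \text{ cyclic}} \big(\tc(k[C];p)/\tc(k;p)\big)_{h(G/C)}\ \text{-type terms},
\]
or, in the cleaner Nikolaus--Scholze form, a decomposition of $\thh(k[G])=\bigoplus_{g\in G}\thh(k)\otimes B\langle\text{component of }g\rangle$ into summands indexed by $g\in G$. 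The element $g$ contributes $\thh(k)\otimes (BG)$, with circle action twisted through the map $B\mathbb{Z}\to BG$, $1\mapsto g$. We then compute $\tc$ summand by summand, grouping the $g$ by the cyclic subgroup $\langle g\rangle$.

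The steps are as follows. First, treat the summand $g=1$. There $\thh(k)\otimes BG_+$ carries the trivial cyclotomic structure twisted by $G$, and its $\tc$ is $\tc(k;p)\otimes BG_+$, which up to $p$-completion is $k$-free only in degree $0$ and $-1$ coming from $\tc(k)$. So it contributes nothing in degrees $i>0$ apart from $\mathbb{Z}_p$-modules that vanish after the standard $\mathrm{coker}(1-F)$ on $W(k)$. Second, take $g\neq 1$ of order $p$. Frobenius sends the $g$-summand to the $g^p=1$ summand, so these summands are not fixed by Frobenius. Following the Hesselholt--Madsen argument for $k[C_p]$, the $\tc$ contribution of the orbit $\{g\}$ (with $F$ mapping $g\mapsto g^p$) reduces to the homotopy orbits $(\thh(k)\otimes B(G/\langle g\rangle)_+ \otimes \Sigma B C_{p})_{hS^1}$-type term. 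More concretely, it reduces to $\Sigma\,\thh(k)_{hC_p}$-contributions smashed with $B(G/\langle g\rangle)_+\simeq (BC_p^{r-1})_+$. Third, compute homotopy: $\pi_*(\thh(k)_{hC_p})$ reduced appropriately gives $k$ in each odd degree, which is the factor $x+x^3+\dots$. Tensoring over $k$ with $H_*(BC_p^{r-1};k)$ gives Poincaré series $(1+x+x^2+\dots)^{r-1}$, since $H_*(BC_p;\mathbb{F}_p)$ is one-dimensional in each degree. There are $p^r-1$ nonidentity elements, which produces the factor $(p^r-1)$.

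The main obstacle is the second step: showing that the non-identity summands contribute exactly a homotopy-orbit term with no extension or differential interference, and that the homotopy of $\thh(k)\otimes BC_p^{r-1}$ under the relevant orbit construction is a $k$-vector space rather than containing $W_n(k)$-extensions. I would first try to establish that the Frobenius on these summands is an equivalence onto the $g^p$-part after $p$-completion, so that $\tc$ of the $g$-block is the fibre of $\mathrm{can}-\varphi$ on the pair of blocks. This identifies it with $\Sigma(\thh(k)\otimes B\mathbb{Z}\times BG)_{hS^1}$ restricted appropriately. I would then argue degree-wise that the result is a $k$-module because $\thh(k)$ is an $\mathbb{F}_p$-linear $\thh(\mathbb{F}_p)$-module before taking orbits, and the relevant orbit is taken over a group acting through a finite quotient, which avoids the $\mathbb{Z}_p$-towers.
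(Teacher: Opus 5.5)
There is a genuine gap: your first step is false, and it hides the main difficulty. The $g=1$ summand gives $\tc(\thh(k)\otimes\mathbb{S}[BG];p)\simeq\tc(k)\otimes BG_+$ up to $p$-completion. Its homotopy in degree $i$ is $H_i(BG;\mathbb{Z}_p)\oplus H_{i+1}(BG;\tc_{-1}(k))$, which is nonzero for every $i>0$ (e.g.\ $H_1(BC_p^r;\mathbb{Z}_p)\cong(\mathbb{Z}/p)^r$). Nor can you compute $\tc$ summand by summand. The decomposition of $\thh(k[G])$ by $g$ is not a splitting of $p$-cyclotomic spectra, because Frobenius carries the $g\neq 1$ components into the $g=1$ component. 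What you actually get is a cofibre sequence
\[
C_*(BG;\tc(k))\longrightarrow \tc(k[G];p)\longrightarrow \tc^W(BG;k)\simeq\big(\Sigma(\thh(k)\otimes\mathbb{S}[L_TBG])_{hS^1}\big)^\wedge_p,\qquad T=G\setminus\{1\}.
\]
This is exactly the paper's assembly sequence, and your non-identity summands assemble to $\tc^W$, as in the proof of Proposition~\ref{free-action}. Determining $\tc_i(k[G];p)$ requires controlling the connecting maps, and that is where the paper's work lies. Proposition~\ref{split-inj-ass} shows that $H_n(BG;\mathbb{Z}_p)\to\tc_n$ is split injective. Proposition~\ref{bound-surj-ass}, with Lemma~\ref{group-homology-surj}, shows that the boundary $\tc^W_n\to H_n(BG;\tc_{-1}(k))$ is surjective. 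Both use cyclic assembly to reduce to the rank-one Lemmas~\ref{BC_p-split-inj} and~\ref{BC_p-split-surj}. For finite $k$ the two homology terms are then isomorphic, since $\tc_{-1}(k)\cong\mathbb{Z}_p$, and they cancel in $K_0(\mathbb{F}_p[H])$ (Proposition~\ref{k-fin-ass-iso}). For infinite $k$ a cardinality argument finishes. So $\tc_i\cong\tc^W_i$ holds only abstractly, not because the identity part vanishes: for $k=\overline{\mathbb{F}}_p$ one actually has $\tc_i(k[G];p)\cong H_i(BG;\mathbb{Z}_p)\oplus\tc^W_i(BG;k)$.

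Your proposed fix for the extension problem also fails. $\mathbb{F}_p$-linearity of the underlying spectrum of $\thh(k)$ does not survive $S^1$-homotopy orbits: $\pi_2(\thh(\mathbb{F}_p)_{hS^1})\cong\mathbb{Z}/p^2$. The orbits here are genuinely over $S^1$, not a finite quotient. The paper sidesteps this as follows. Since $\tc^W$ vanishes on a point, it is a constant functor on the non-trivial orbits of $\cyclicorbit(C_p^r)$ (Lemmas~\ref{red-cyc-functor} and~\ref{trivial-functor}). This gives $\tc^W(BC_p^r;k)\simeq\bigoplus_{|H|=p}\mathbb{S}[BC_p^{r-1}]\otimes\tc^W(BC_p;k)$, so all extension questions reduce to the rank-one Hesselholt--Madsen computation. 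For the same reason your degree count in the third step is off. Each $g\neq 1$ picks up the $H_*(BS^1)$ direction of the orbit spectral sequence, whose $E^2$-page is $H_*(BS^1;\thh_*(k))$. Consistently, the rank-one input $\tc^W_{2n-1}(BC_p;k)\cong k^{n(p-1)}$ means $k^n$, not $k$, per element in degree $2n-1$. Note also that the displayed series at $r=1$ gives $k^{p-1}$ in every odd degree, which is inconsistent with that input. Inserting the input into the formula of Proposition~\ref{tcw-calc} gives the displayed series multiplied by $(1+x^2+x^4+\cdots)$. So the fact that your count reproduces the statement is not evidence that the count is right.
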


To obtain our results, we develop tools for computing the topological cyclic homology of group rings. The topological Hochschild homology of group rings is intimately related to structures on the free loop space $LBG=\mathrm{Map}(S^1,BG)$, and we analyze $LBG$ to derive results on the topological cyclic homology. Since free loops are controlled by conjugacy classes of cyclic subgroups, this naturally leads to assembly phenomena over cyclic subgroups.

While our primary results only require coefficients in $k$, in the first two sections we work in the generality of coefficients in a connective ring spectrum $R$.

\subsection*{Cyclic assembly}
For the calculation in Theorem \ref{intro-thm-b}, we use a theorem of L\"uck--Reich--Rognes--Varisco (\cite[Theorem 1.2]{luck2019assembly}). Informally, the theorem states that for a group $G$ (with a certain finiteness hypothesis) and a connective ring spectrum $R$, the spectrum $\tc(R[G];p)$ may be recovered from the spectra $\tc(R[H];p)$ as $H$ ranges over the cyclic subgroups. Formally, if we denote by $\mathcal{O}_{\mathcal{F}_\mathrm{cyc}}(G)$ the category of transitive $G$-sets with cyclic isotropy, one can construct a functor $\mathcal{O}_{\mathcal{F}_\mathrm{cyc}}(G) \to \Sp$ assigning the spectrum $\tc(R[H];p)$ to $G/H$, and the corresponding assembly map
\[ \underset{\mathcal{O}_{\mathcal{F}_\mathrm{cyc}}(G)}{\colim} \ \tc(R[H];p)\to\tc(R[G];p) \]
is an equivalence. One can view this result as analogous to Artin's induction theorem, with the caveat that the associated spectral sequence in no way collapses. In this paper, we reprove the cyclic assembly theorem for $\tc$ in the language of Nikolaus--Scholze, and show that it holds for all (possibly infinite) groups after $p$-completion (Corollary \ref{cyc-induct-tc}). We furthermore analyze the assembly map for smaller families of subgroups, in particular the family of cyclic subgroups of order prime to $p$ (see Corollary \ref{tc-cyc-prime}).

For an elementary abelian group, the structure of the category $\mathcal{O}_{\mathcal{F}_\mathrm{cyc}}(G)$ is particularly simple, making the colimit amenable to computation. Moreover, the cyclic subgroups are simply $C_p$, and the $\tc$ of the ring $k[C_p]\cong k[x]/(x^p)$ is well understood (\cite{speirs2020k}). Hence, we use cyclic assembly to give the description of $ \tc_i(k[C_p^r];p)$ in Theorem \ref{intro-thm-b}.

\subsection*{Frobenius groups}
To obtain Theorem \ref{intro-thm-a} from Theorem \ref{intro-thm-b}, we step up from the Sylow subgroup to its normaliser. For the groups considered here, this normaliser is the Borel subgroup of upper triangular matrices. In the cases of $\psl_2(\mathbb{F}_q)$ and $\pgl_2(\mathbb{F}_q)$, these normalisers have a straightforward structure as Frobenius groups. Recall that a semi-direct product $K \rtimes H$ is a Frobenius group if $H \cap H^g$ is trivial for all $g \in G \setminus H$. Just as the complex representation theory of such a group cleanly decomposes into representations of $H$ (via inflation) and $K$ (via induction), we show that $\tc(R[G])$ admits a similar decomposition. 

\begin{prop*}
    Suppose $G\cong K \rtimes H$ is a Frobenius group. For $R \in \mathrm{CAlg}(\Sp)$ connective, the diagram

    % https://q.uiver.app/#q=WzAsNCxbMCwwLCJcXHRjKFIpX3toSH0iXSxbMiwwLCJcXHRjKFJbSF0pIl0sWzAsMiwiXFx0YyhSW0tdKV97aEh9Il0sWzIsMiwiXFx0YyhSW0ddKSJdLFswLDJdLFsyLDNdLFswLDFdLFsxLDNdXQ==
\[\begin{tikzcd}
	{\tc(R;p)_{hH}} && {\tc(R[H];p)} \\
	\\
	{\tc(R[K];p)_{hH}} && {\tc(R[G];p)}
	\arrow[from=1-1, to=1-3]
	\arrow[from=1-1, to=3-1]
	\arrow[from=1-3, to=3-3]
	\arrow[from=3-1, to=3-3]
\end{tikzcd}\]

    becomes a pushout after $p$-completion. If moreover the order of $H$ is prime to $p$, then the map $\tc(R[K];p)_{hH}\to \tc(R[G];p)$ is split after $p$-completion.
\end{prop*}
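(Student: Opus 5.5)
We reduce to the cyclic assembly theorem (Corollary \ref{cyc-induct-tc}): after $p$-completion, $\tc(R[G];p)$ is the colimit over $\cyclicorbit(G)$ of $G/C\mapsto\tc(R[C];p)$. The idea is to split $\cyclicorbit(G)$ according to where the cyclic subgroups lie. In a Frobenius group $G=K\rtimes H$, every element lies either in $K$ or in a conjugate of $H$, and $H\cap H^g=1$ for $g\notin H$. Hence each nontrivial cyclic subgroup $C$ lies either in $K$ or in exactly one conjugate of $H$; a cyclic subgroup cannot meet both nontrivially, since $K\cap H^g=1$ and cyclic groups have at most one subgroup of each order. This gives a decomposition of the family of cyclic subgroups as $\mathcal{F}_K\cup\mathcal{F}_H$, where $\mathcal{F}_K$ consists of the cyclic subgroups of $K$ and $\mathcal{F}_H$ of the cyclic subgroups of conjugates of $H$, with $\mathcal{F}_K\cap\mathcal{F}_H=\{1\}$.

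The first step is to show that the colimit over $\cyclicorbit(G)$ decomposes as a pushout of the colimits over the orbit categories for $\mathcal{F}_K$, $\mathcal{F}_H$ and $\{1\}$. I would do this via the left Kan extension and cofinality formalism. Since $\mathcal{F}_K$ and $\mathcal{F}_H$ are families that are closed under subgroups and conjugation and intersect in $\{1\}$, the $G$-space $E\mathcal{F}_{\mathrm{cyc}}$ is the homotopy pushout of $E\mathcal{F}_K \leftarrow EG\rightarrow E\mathcal{F}_H$. This can be checked on fixed points for each cyclic $C$: if $C\in\mathcal{F}_K\setminus\{1\}$, the $C$-fixed points are $(*\leftarrow\emptyset\to\emptyset)$, giving $*$; the case $C\in\mathcal{F}_H\setminus\{1\}$ is symmetric; and for $C=1$ all three are contractible. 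Colimits of the functor $\tc(R[-];p)$ over an orbit category are computed by tensoring with these classifying spaces, so this yields the pushout of assembly colimits.

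The second step is to identify each corner.
\begin{itemize}
\item Over $\{1\}$, the colimit over the orbit category is $\tc(R;p)_{hG}$.
\item Over $\mathcal{F}_K$: every subgroup in $\mathcal{F}_K$ lies in the normal subgroup $K$, so induction from $K$ identifies the colimit with $\big(\colim_{\cyclicorbit(K)}\tc(R[-];p)\big)_{hH}$. By cyclic assembly for $K$, this is $\tc(R[K];p)_{hH}$.
\item Over $\mathcal{F}_H$: every subgroup lies in a unique conjugate of $H$, and $H$ is self-normalising, so $\mathcal{F}_H$ is induced from the family of cyclic subgroups of $H$. The colimit is therefore $\tc(R[H];p)$, again by cyclic assembly for $H$.
\end{itemize}
The corners come out as $\tc(R;p)_{hG}$ rather than $\tc(R;p)_{hH}$. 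To fix this, replace $\{1\}$ by the family of subgroups of $K$, i.e.\ use the equivalence $(\tc(R;p)_{hK})_{hH}$ versus $\tc(R[K];p)$. More cleanly, I expect the precise pushout to use $E\mathcal{F}_K\simeq EH$ as a $G$-space, so that the $\mathcal{F}_K$-corner is exactly the stated term. Getting this bookkeeping right, meaning the correct identification of the induced functors and the naturality of the maps, is the step I expect to be the main obstacle.

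For the splitting, assume $|H|$ is prime to $p$. Then for $p$-complete spectra, homotopy orbits by $H$ are a retract of the underlying spectrum, via the transfer composed with averaging. So the map $\tc(R;p)_{hH}\to\tc(R[H];p)$ is split: compose with the augmentation $R[H]\to R$ and divide by $|H|$. A split map in the top row of a pushout square yields a split map in the bottom row. This gives a retraction of $\tc(R[K];p)_{hH}\to\tc(R[G];p)$ after $p$-completion.
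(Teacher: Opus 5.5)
\textbf{There is a genuine gap: the $\mathcal{F}_H$-corner of your pushout is misidentified.}

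Your fixed-point check that $E\mathcal{F}_{\mathrm{cyc}}\simeq E\mathcal{F}_K\cup_{EG}E\mathcal{F}_H$ is fine, and so are your identifications of the $\{1\}$- and $\mathcal{F}_K$-corners. The error in the $\mathcal{F}_H$-corner is exactly what produces the mismatch you noticed.

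The family $\mathcal{F}_H$ contains the trivial subgroup, and that subgroup lies in all $|K|$ conjugates of $H$. So $\mathcal{F}_H$ is \emph{not} induced from $H$:
\begin{itemize}
\item induction $\cyclicorbit(H)\to\mathcal{O}_{\mathcal{F}_H}(G)$ is an equivalence except at the trivial orbit, where $\mathrm{Aut}(G/e)=G$ but $\mathrm{Aut}(H/e)=H$;
\item equivalently, $G\times_H E_H\mathcal{F}_{\mathrm{cyc}}(H)$ has the right fixed points for $C\neq 1$, but its underlying space is $G/H$, not a point.
\end{itemize}
Concretely, Theorem \ref{induct-tc} with $T=\bigcup_g H^g$ computes the colimit over $\mathcal{O}_{\mathcal{F}_H}(G)$ as $\tc(\thh(R)\otimes\mathbb{S}[L_TBG];p)$. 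Here $L_TBG\simeq BG\sqcup(LBH\setminus BH)$, whose constant-loop component is $BG$, not $BH$.

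So your decomposition actually yields a pushout $\tc(R[K];p)_{hH}\leftarrow\tc(R;p)_{hG}\to X_H$. In it, $X_H$ differs from $\tc(R[H];p)$ in the constant-loop summand. That pushout is valid, but it is not the square in the statement. Your suggested repairs also act on the wrong side. Replacing $\{1\}$ by all subgroups of $K$ changes nothing, since that family still meets $\mathcal{F}_H$ only in $\{1\}$. And the $\mathcal{F}_K$-corner was never the problem.

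\textbf{How to fix it.} The missing idea is to glue along induced $G$-spaces rather than along classifying spaces of families. Set $Y=G\times_H E_H\mathcal{F}_{\mathrm{cyc}}(H)$. The same fixed-point check gives $E\mathcal{F}_{\mathrm{cyc}}\simeq E\mathcal{F}_K\cup_{G\times_H EG}Y$; for $C=1$ it reads $*\leftarrow G/H\xrightarrow{\simeq}G/H$. Tensoring with your functor now gives $\tc(R[K];p)_{hH}\leftarrow\tc(R;p)_{hH}\to\tc(R[H];p)$ after $p$-completion, using cyclic assembly for $H$ on the right-hand corner.

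\textbf{The paper's route.} The paper avoids orbit-category bookkeeping entirely. The $G$-classes outside $K$ are represented by the nontrivial classes of $H$, and $C_G(h)=C_H(h)$ for these. Hence $LBG\simeq L_KBG\sqcup(LBH\setminus BH)$, so $L_KBG\leftarrow BH\to LBH$ is a pushout of spaces with Frobenius lifts with colimit $LBG$. One then applies $\tc(\thh(R)\otimes\mathbb{S}[-];p)$ and Proposition \ref{tc-normal}.

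\textbf{The splitting.} Your splitting argument is valid once the correct square is in place. When $p\nmid|H|$, the composite $\tc(R;p)_{hH}\to\tc(R[H];p)\to\tc(R;p)$ is an equivalence after $p$-completion, and a retraction of the top map of a pushout transfers to the bottom map. The paper instead observes that in this case $L_KBG\sqcup L_{G\setminus K}BG$ is a decomposition compatible with the $p$-Frobenius lift.
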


As an example, consider the group $\mathbb{F}_q \rtimes \mathbb{F}_q^\times$, which is the normaliser of a Sylow $p$-subgroup in $\pgl_2(\mathbb{F}_q)$. The above proposition implies that for any connective $R$, we have 
\[ \tc_i(R[\mathbb{F}_q \rtimes \mathbb{F}_q^\times];p) \cong \tc_i(R[\mathbb{F}_q])_{\mathbb{F}_q^\times} \oplus \tc_i(R[\mathbb{F}_q^\times];p) \]
up to $p$-completion. As part of our calculation of Theorem \ref{intro-thm-b}, we compute the action of $\mathbb{F}_q^\times$ on $\tc_i(k[C_p^r];p)$. Using this tool, alongside further analysis for the families $\spl_2(\mathbb{F}_q)$ and $\gl_2(\mathbb{F}_q)$, we calculate the groups $\tc_i(k[N];p)$, where $N$ is the normaliser of a Sylow $p$-subgroup.

\subsection*{Reduction to $p$-local subgroups}
The final step in proving Theorem \ref{intro-thm-a} is passing from the Sylow subgroup to the entire group $G$. As we show, for the families listed, this follows immediately from an analysis of their stable module categories. However, a secondary goal of this paper is to demonstrate how computing $\tc_i(k[G];p)$ for a general finite group $G$ may be reduced entirely to computations involving $p$-local subgroups (subgroups containing a non-trivial normal $p$-subgroup).

To this end, we describe in Appendix A induction theory for the genuine $G$-spectrum $\tc(k[-];p)$, showing that it is projective relative to Brauer hyperelementary subgroups. While much of this induction theory---studied previously by Vogeli in \cite{vogeli2025derived} and used to great effect---will be known to experts, we provide a unified reference here. We deduce the following result, which allows one to study the groups $\tc_i(k[G];p)$ via the Brown complex of non-trivial $p$-subgroups (also known as the Quillen complex). Recall that this is the simplicial complex $\Delta_p(G)$ with $n$-simplices given by chains $P_0 < \ldots <P_n$ of non-trivial $p$-subgroups. It has the property that the stabiliser $G_\sigma$ of any simplex is a $p$-local subgroup of $G$.

\begin{prop*}
    Let $k$ be a perfect splitting field for $G$ of characteristic $p$. For each $i>0$, there is a split exact sequence
    \begin{align*}
    0 \to \tc_i(k[G];p) \to \bigoplus_{\sigma \in (\Delta_p(G))_0/G} \tc_i(k[G_\sigma];p)
    &\to \bigoplus_{\sigma \in (\Delta_p(G))_1/G} \tc_i(k[G_\sigma];p) \to \dots\\
    \end{align*}    
    where $(\Delta_p(G))_n$ denotes the set of $n$-simplices of $\Delta_p(G)$.
\end{prop*}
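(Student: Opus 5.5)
The plan is to realise $\tc(k[-];p)$ as a Mackey functor and apply a standard Brown-complex acyclicity argument. The Mackey structure comes from induction and restriction along subgroup inclusions; this is the genuine $G$-spectrum of Appendix A. For each $i$, the assignment $H\mapsto \tc_i(k[H];p)$ is then a Mackey functor $M$ on subgroups of $G$. The appendix shows it is projective relative to the Brauer hyperelementary subgroups.

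First I would improve this to projectivity relative to the $p$-local subgroups in positive degrees. If $H$ is hyperelementary with trivial $p$-part, then $H$ has order prime to $p$. So $k[H]$ is semisimple and Morita equivalent to a product of perfect fields, and $\tc_i(k[H];p)=0$ for $i>0$. Every other Brauer hyperelementary subgroup has a nontrivial normal Sylow $p$-subgroup, and is therefore $p$-local. Relative projectivity is inherited when we drop subgroups on which $M$ vanishes. Hence $M$ is projective relative to the family $\mathcal{L}$ of $p$-local subgroups. Every such subgroup normalises a nontrivial $p$-subgroup, so it fixes a vertex of $\Delta_p(G)$.

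Next, form the chain complex of $G$-sets $\Delta_p(G)_0\leftarrow\Delta_p(G)_1\leftarrow\cdots$, augmented to a point. Applying $M$ contravariantly, $M(X)=\bigoplus_{\sigma\in X/G}M(G_\sigma)$ for a $G$-set $X$, and the face maps are sums of restrictions. This gives exactly the displayed sequence, with the first map given by restrictions $M(G)\to M(G_\sigma)$. (Restrictions to simplices whose stabiliser is not $G_\sigma$ as a set are handled by the usual conjugation bookkeeping.) To show the augmented complex is split exact, I would use the standard fact (Dress, also Webb's theorem on Brown complexes) that an $\mathcal{L}$-projective Mackey functor is split exact on any complex of $G$-sets whose $K$-fixed points are contractible for all $K\in\mathcal{L}$. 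Concretely, $M$ is a summand of $\bigoplus_{L\in\mathcal{L}}\mathrm{Ind}_L^G\mathrm{Res}^G_L M$. By the Mackey formula, this reduces exactness to the restricted complex for each $L$, which is the complex of $\Delta_p(G)$ viewed as an $L$-set.

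The key input is Quillen's theorem: for $L$ with a nontrivial normal $p$-subgroup $O_p(L)$, the fixed points $\Delta_p(G)^K$ are contractible for every $K\le L$ whose conjugates lie in $\mathcal{L}$. The contraction is the conical one, $P\le PO\ge O$, with $O=O_p(K)$. Contractibility of fixed points makes the simplicial chain complex of $\Delta_p(G)$, as a complex of $L$-sets, split exact as Mackey-functor coefficients. In practice one passes to the orbit-wise cochain complex and uses an $L$-equivariant contraction built from the cone point.

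The main obstacle I expect is making this last step rigorous. The cone contraction is only a homotopy equivalence, not a simplicial retraction. Transferring it to split exactness of $M$-coefficient cochains needs care: either use the $\mathcal{L}$-equivariant homotopy equivalence $\Delta_p(G)\simeq *$ in the sense of Bredon, or argue via Bredon cohomology $H^*_G(\Delta_p(G);M)$ compared with $H^*_G(*;M)$. Splitness would then follow from $\mathcal{L}$-projectivity, and the vanishing from the fixed-point contractibility.
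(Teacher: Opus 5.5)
The overall architecture matches the paper's: relative projectivity of $M=\tc_i(k[-];p)$, vanishing on $p'$-subgroups in positive degrees, Quillen's conical contraction, and a Webb-type acyclicity theorem. The step where you pass to the family $\mathcal{L}$ of $p$-local subgroups fails, and it fails for two independent reasons.

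First, it is not true that every Brauer hyperelementary subgroup that is not a $p'$-group has a normal Sylow $p$-subgroup. In $C_n\rtimes P$ with $p\nmid n$, the Sylow subgroup $P$ is normal only when it acts trivially, i.e.\ only when the group is elementary. For instance, $S_3=C_3\rtimes C_2$ at $p=2$ has $O_2(S_3)=1$ and $\Delta_2(S_3)^{S_3}=\emptyset$. Yet $\tc_i(k[S_3];2)\cong\tc_i(k[C_2];2)\neq 0$ in odd degrees, because $C_2$ is strongly $2$-embedded in $S_3$. So such subgroups can neither be discarded by vanishing nor handled by a contraction. This is exactly where the hypothesis that $k$ is a splitting field is needed, and your proposal never uses it. By the second half of Corollary~\ref{trace-inv-defect}, $M$ is then projective relative to the Brauer \emph{elementary} subgroups $C_n\times Q$. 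These groups are nilpotent, so each is either a $p'$-group, where $M$ vanishes, or of the form $P\times H'$ with $1\neq P$ a $p$-group. In the latter case $\Delta_p(G)^{P\times H'}$ is contractible via $S\le SP\ge P$.

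Second, even granting $\mathcal{L}$-projectivity, enlarging to $\mathcal{L}$ breaks your last step. The family $\mathcal{L}$ is not closed under subgroups. After reducing to $\mathrm{Res}^G_L$, you need the fixed-point condition for every $K\le L$ with $M(K)\neq 0$, not merely for $K\in\mathcal{L}$. The cone $P\le PO\ge O$ with $O=O_p(L)$ is only defined on $p$-subgroups normalised by $O$, so it does not give an $L$-equivariant contraction of $\Delta_p(G)$. Concretely, take $G=S_3\times S_3$ and $p=2$. The subgroup $L=C_2\times S_3$ is $p$-local, but $K=1\times S_3\le L$ has $\Delta_2(G)^K$ equal to three isolated points, while $M(K)\neq 0$.

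In fact the ``standard fact'' as you state it is false. Inducing the rational Burnside functor of this $L$ up to $G$ gives an $\mathcal{L}$-projective Mackey functor whose augmented cochain complex on $\Delta_2(G)$ has the nonzero cohomology summand $\tilde H^0(\Delta_2(G)^K;\mathbb{Q})^{L/K}\cong\mathbb{Q}$. The correct statement is \cite[Theorem A]{webb1991split}. It takes a subgroup-closed family $\mathcal{X}$ relative to which $M$ is projective, and a family $\mathcal{Y}$ on which $M$ vanishes. It then asks for acyclicity of $\Delta^H$ only for $H\in\mathcal{X}\setminus\mathcal{Y}$.

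With $\mathcal{X}$ the elementary subgroups (a subgroup-closed family) and $\mathcal{Y}$ the $p'$-subgroups, the argument in the first point verifies precisely these hypotheses, and Webb's theorem gives the split exact sequence directly. That is the paper's proof. It also disposes of the `main obstacle' you flag at the end, since that step is exactly the content of Webb's theorem.
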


\subsection*{Acknowledgments}
I would like to thank my supervisor Jesper Grodal for guidance on the content of this paper, as well as many encouraging conversations. I also thank Maxime Ramzi for several helpful comments and corrections. Finally, I would like to thank Emma Brink and Jonathan Clivio for useful comments on the introduction, and Chase Vogeli for discussions on an earlier draft of this project.

I was supported by the Danish National Research
Foundation through the Copenhagen Centre for Geometry and Topology (DNRF151).

\subsection*{Outline}
In Section 1, we recall material relating to free loop spaces. We analyze the free loop space on $BG$ and state the description of the cyclotomic spectrum $\thh(k[G])$ in terms of the free loop space. 

In Section 2, we use the analysis of Section 1 to deduce results on $\thh$ and $\tc$ of group rings. In particular, we prove that cyclic assembly holds for $\tc$, and examine smaller families of subgroups.

In Section 3, we calculate $\tc(k[C_p^r];p)$ by combining cyclic assembly with the usual assembly sequence. We also recall material relating to the $r=1$ case.

In Section 4, we apply the results of earlier sections to achieve the main computation of the paper, Theorem \ref{main-thm}. We compute the topological cyclic homology of the group rings, and finally recall how $\tc$ relates to the $K$-theory groups.

Finally, in Appendix A, we review the theory of spectral Mackey functors and show how the $p$-local reductions used in Section 4 can be framed in terms of Brown's simplicial complex.

\subsection*{Conventions}
All categories are $\infty$-categories, in the sense of \cite{lurie2009higher, lurie2012}. We denote by $\mathcal{S}$ the category of spaces, and $\mathbb{S}[-]$ the suspension spectrum functor.

Throughout the paper, we use the language of cyclotomic spectra developed by Nikolaus--Scholze, \cite{nikolaus2018topological}. We will use $\tc(-;p)$ to refer to the $p$-typical variant of topological cyclic homology.
\section{Free loop spaces}
In this section, we recall some material relating to free loop spaces. Free loop spaces carry an action of a certain monoid (the Witt monoid), which can be used to describe the cyclotomic structures on Hochschild homology. We analyse this action in the case of the free loop space on $BG$ for $G$ a discrete group, and show how it relates to the well known centraliser decomposition. Finally we prove an assembly theorem for the free loop space, which leads to assembly results for Hochschild and cyclic homology of group rings.

\subsection{The Witt monoid and Frobenius lifts}
For a space $X \in \mathcal{S}$, the free loop space $LX:= \Map(S^1,X)$ has a rich structure. Firstly, there is an $S^1$-action coming from the action of $S^1$ on itself. Secondly, for each $n \in \mathbb{N}$ there is an $S^1$-equivariant equivalence
\[ \Map(S^1,X) \xrightarrow{\simeq}\Map(S^1_{hC_n}, X) \simeq \Map(S^1,X)^{hC_n} \]
where $\Map(S^1,X)^{hC_n}$ carries the residual $S^1/C_n \simeq S^1$ action (informally, the equivalence sends a loop in $X$ to the same loop repeated $n$ times). In particular, we have an $S^1$-equivariant map $LX \xrightarrow{} LX^{hC_n}$ for each $n \in \mathbb{N}$, as well as various compatibilities between them. Such maps are known as Frobenius lifts (see for example \cite{antieau2021cartier}). Following \cite{mccandless2021curves}, the structure of Frobenius lifts may be formalised as the action of a certain monoid (the Witt monoid). We include also the $p$-typical version of this construction, which we will utilize later.

\begin{definition}\cite[Construction 2.1.1]{mccandless2021curves}
    The monoid $\mathbb{N}^\times$ acts on $S^1$ via $z \mapsto{} z^k$ for $k \in \mathbb{N}^\times$. Let the Witt monoid $\mathbb{W}$ be the semi-direct product
    \[ \mathbb{W} = S^1 \rtimes \mathbb{N}^\times \]
    where $\mathbb{N}^\times$ is regarded as a discrete topological monoid.

    For a prime $p$, let $\mathbb{W}_p \subseteq \mathbb{W}$ be the submonoid $S^1 \rtimes \mu_p$, where $\mu_p=\{1,p,p^2,\ldots\} \subseteq \mathbb{N}^\times$.
\end{definition}

For any $\mathcal{C}$, we can now define the category of objects in $\mathcal{C}$ with Frobenius lifts as a presheaf category on $B\mathbb{W}$ - equivalently, the category of objects in $\mathcal{C}$ with a (right) action of $\mathbb{W}$.

\begin{definition}\cite[Definition 2.12]{mccandless2021curves}
    For a category $\mathcal{C}$, the category of objects in $\mathcal{C}$ with Frobenius lifts is
    \[ \mathcal{C}^\mathrm{Fr} = \fun(B\mathbb{W}^\mathrm{op}, \mathcal{C}) = \mathcal{P}_\mathcal{C}(B\mathbb{W}). \]
    Similarly, the category of objects with a $p$-Frobenius lift is
    \[ \mathcal{C}^{p\mathrm{Fr}} = \fun(B\mathbb{W}_p^\mathrm{op}, \mathcal{C}) = \mathcal{P}_\mathcal{C}(B\mathbb{W}_p). \]
    When $\mathcal{C}$ is symmetric monoidal, we will equip $\mathcal{C}^\mathrm{Fr}$ (and $\mathcal{C}^{p\mathrm{Fr}}$) with the pointwise symmetric monoidal structure.
\end{definition}

By restricting along $S^1 \xrightarrow{} \mathbb{W}$, we obtain a functor
\[ \mathcal{C}^\mathrm{Fr} \xrightarrow{} \fun((BS^1)^\mathrm{op}, \mathcal{C}) \simeq \fun(BS^1, \mathcal{C}) \]
that sends an object with Frobenius lifts to the underlying object with $S^1$-action. The extra structure coming from $\mathbb{N}^\times$ is precisely the Frobenius lifts $X \xrightarrow{} X^{hC_n}$ for each $n$, along with compatibilities between them. Explicitly, an element $n \in \mathbb{N}^\times$ acts via a map $X \xrightarrow{} X$, and the structure of the Witt monoid ensures that this factors through the fixed points of $C_n \subseteq S^1$. By contrast, the data of an object with a $p$-Frobenius lift is equivalent to an object $X$ with an $S^1$-action along with a single map $X \xrightarrow{} X^{hC_p}$ (this follows from $\mu_p \cong \mathbb{N}$ being the free $\mathbb{E}_1$-monoid). There is an evident forgetful functor $\mathcal{C}^\mathrm{Fr} \xrightarrow{}\mathcal{C}^{p\mathrm{Fr}}$ which we will often use implicitly.

Now, there is a homomorphism $\mathbb{W} \xrightarrow{} \mathrm{End}(S^1)$ given by the action of $\mathbb{W}$ on $S^1$,
\[ (\theta, n) \cdot z = \theta z^n. \]
The free loop space $LX$ carries a right action of $\mathrm{End}(S^1)$. We use this to define an action of $\mathbb{W}$ on $LX$. We will also refer to the resulting functor $\mathcal{S} \xrightarrow{} \mathcal{S}^\mathrm{Fr}$ as $L(-)$.

\begin{definition}
    Let $L(-): \mathcal{S} \xrightarrow{} \mathcal{S}^\mathrm{Fr}$ be the composite
    \[\mathcal{S} \xrightarrow{} \mathcal{P}_\mathcal{S}(B\mathrm{End}(S^1)) \xrightarrow{} \mathcal{P}_\mathcal{S}(B\mathbb{W}) = \mathcal{S}^\mathrm{Fr} \]
    where the first functor is the restricted Yoneda embedding, and the second is restriction along $B\mathbb{W} \xrightarrow{} B\mathrm{End}(S^1)$.
\end{definition}

For any functor $F: \mathcal{C} \xrightarrow{} \mathcal{D}$, there is a functor $F^\mathrm{Fr}: \mathcal{C}^\mathrm{Fr} \xrightarrow{} \mathcal{D}^\mathrm{Fr}$. In particular, for $R \in  \Sp$, we obtain the composite
\[ \mathcal{S} \xrightarrow{L(-)} \mathcal{S}^\mathrm{Fr} \xrightarrow{\mathbb{S}[-]} \Sp^\mathrm{Fr} \xrightarrow{R \otimes -} \Mod(R)^\mathrm{Fr} \]
sending a space $X$ to the $R$-homology of the free loop space $R[LX]$, so that $R[LX]$ has an action of the Witt monoid functorial in maps of spaces.  

\subsection{The free loop space on $BG$}

Let $G$ be a discrete group. There is a well-known decomposition of the free loop space $LBG$ in terms of centraliser subgroups. Fix a set of representatives for the conjugacy classes in $G$. Then
\[ LBG \simeq \bigsqcup_{[g]} BC_G(g) \]
where $C_G(g)$ denotes the centraliser. We explain the centraliser decomposition, and how it relates to the structure of Frobenius lifts, in the next proposition.

\begin{prop}[Centraliser decomposition]\label{cent-decomp}
    The components of the space $LBG$ are indexed by the conjugacy classes of $G$, and there is an $S^1$-equivariant decomposition
    \[  LBG \simeq \bigsqcup_{[g]} BC_G(g).  \]
    Moreover, the action of $n \in \mathbb{N}^\times \subseteq \mathbb{W}^\mathrm{op}$ on $\pi_0(LBG)$ sends the component indexed by $[g]$ to the component indexed by $[g^n]$. 
\end{prop}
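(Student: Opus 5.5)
We identify $LBG$ with the mapping groupoid $\fun(BS^1,BG)$, computed at the level of $1$-types. Since $BG$ is a $1$-type, $\Map(S^1,BG)$ is also a $1$-type. It is equivalent to the groupoid of functors $B\mathbb{Z}\to BG$, because $S^1\simeq B\mathbb{Z}$ and maps into a $1$-type only see the fundamental groupoid. This functor groupoid has objects the elements $g\in G$ (homomorphisms $\mathbb{Z}\to G$) and morphisms $h\colon g\to hgh^{-1}$. Its path components are therefore the conjugacy classes, and the automorphism group of the object $g$ is $C_G(g)$. Choosing representatives gives an equivalence $LBG\simeq\bigsqcup_{[g]}BC_G(g)$ of spaces. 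Equivalently, $LBG\simeq EG\times_G G^{\mathrm{ad}}$, the homotopy quotient of $G$ acting on itself by conjugation; decomposing $G^{\mathrm{ad}}$ into orbits $G/C_G(g)$ recovers the same formula.

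For $S^1$-equivariance, the statement should be read as follows: each component $BC_G(g)$ carries an $S^1$-action, and the decomposition respects it. Connectedness of $S^1$ means the action preserves components, so each $BC_G(g)$ inherits an action. I would describe this action explicitly. An $S^1$-action on a $1$-type $BC$ is determined by a central element, namely the image of the generator of $\pi_1(S^1)$ under the orbit map $S^1\to\Map(BC,BC)$ at the identity, which lands in $\pi_1(\Map(BC,BC),\mathrm{id})=Z(C)$. For the component of $g$, rotating a loop once around traces the loop itself, so the central element is $g\in Z(C_G(g))$. Thus the decomposition is $S^1$-equivariant, where $BC_G(g)$ carries the action induced by $B\mathbb{Z}\to BZ(C_G(g))$, $1\mapsto g$. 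Equivalently, it is the action obtained from the map $B\mathbb{Z}\times BC_G(g)\to BC_G(g)$ given by multiplication by $g$, which is a homomorphism because $g$ is central in $C_G(g)$.

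The main obstacle is making this equivariance rigorous $\infty$-categorically rather than just on homotopy. I would handle it by noting that actions of the $2$-group $S^1=B\mathbb{Z}$ on $1$-types are determined by their strict $1$-categorical data. Concretely, the functor $\mathcal{S}_{\le1}\to\mathcal{S}_{\le1}$, $X\mapsto\Map(S^1,X)$, together with its $S^1$-action, can be computed in the $(2,1)$-category of groupoids. There the rotation action of $B\mathbb{Z}$ on $\fun(B\mathbb{Z},BG)$ is given by the natural transformation $\mathrm{id}\Rightarrow\mathrm{id}$ with component $g$ at the object $g$. This is exactly the natural transformation described above.

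Finally, for $n\in\mathbb{N}^\times$, the action on $LBG$ is precomposition with the degree-$n$ map $S^1\to S^1$, which is $B$ of multiplication by $n$ on $\mathbb{Z}$. On objects of the functor groupoid it sends the homomorphism $1\mapsto g$ to $1\mapsto g^n$. On $\pi_0$ it therefore sends $[g]$ to $[g^n]$, which is well defined on conjugacy classes since $(hgh^{-1})^n=hg^nh^{-1}$.
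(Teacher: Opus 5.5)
Your proposal is correct and follows essentially the same route as the paper. The paper models $BG$ as the groupoid of free transitive $G$-sets and $LBG$ as pairs $(X,\phi)$ with $\phi$ an automorphism, which is equivalent to your functor groupoid $\fun(B\mathbb{Z},BG)$; it then identifies isomorphism classes with conjugacy classes, automorphism groups with centralisers, and the action of $n$ with $\phi\mapsto\phi^n$, i.e.\ $[g]\mapsto[g^n]$. Your explicit identification of the $S^1$-action on each component $BC_G(g)$ via the central element $g\in Z(C_G(g))$ is correct, and it goes beyond the paper, which leaves the $S^1$-equivariance implicit.
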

\begin{proof}
    The space $BG$ is modeled by the groupoid of transitive $G$-sets (there is a unique such set up to non-canonical choice). Hence $LBG$ is the groupoid of pairs $(X, \phi)$ with $X$ a transitive $G$-set, $\phi$ an automorphism of $X$. By construction, $n \in \mathbb{N}^\times$ sends the object $(X, \phi)$ to $(X, \phi^n)$. Every automorphism of the $G$-set $G/e$ is given by right multiplication by some element in $G$, denoted by $r_g$. Hence there is a non-canonical isomorphism $(X, \phi) \cong (G/e, r_g)$ for every $(X, \phi) \in LBG$. Now $(G/e, r_g) \cong (G/e, r_h)$ if and only if there is some $r_k$ such that 
    % https://q.uiver.app/#q=WzAsNCxbMCwwLCJHL2UiXSxbMSwwLCJHL2UiXSxbMCwxLCJHL2UiXSxbMSwxLCJHL2UiXSxbMCwxLCJyX2siXSxbMCwyLCJyX2ciLDJdLFsxLDMsInJfaCJdLFsyLDMsInJfayIsMl1d
\[\begin{tikzcd}
	{G/e} & {G/e} \\
	{G/e} & {G/e}
	\arrow["{r_k}", from=1-1, to=1-2]
	\arrow["{r_g}"', from=1-1, to=2-1]
	\arrow["{r_h}", from=1-2, to=2-2]
	\arrow["{r_k}"', from=2-1, to=2-2]
\end{tikzcd}\]
    commutes, which holds if and only if there is some $k \in G$ such that $h=kgk^{-1}$. Similarly, the automorphism group of $(G/e, r_g)$ is precisely the group $C_G(k)$, proving the first claim. By definition, $n \in \mathbb{N}^\times$ sends $(G/e, r_g)$ to $(G/e, r_{g^n})$, which proves the second claim. 
\end{proof}

We see that the structure of Frobenius lifts on $LBG$ is a homotopy theoretic version of the exponent maps $[g] \mapsto{} [g^n]$ on the set of conjugacy classes in $G$. We now want to construct the structure of Frobenius lifts on subspaces of $LBG$. This will correspond to collections of conjugacy classes closed under $[g] \xrightarrow{} [g^n]$.

\begin{definition}\label{LTBG-def}
    For $T \subseteq G$ a subset closed under conjugation, let $L_TBG \subseteq LBG$ be the subspace of components indexed by conjugacy classes lying in $T$. 
\end{definition}

Hence by definition, upon picking representatives for conjugacy classes in $T$ we have the centraliser decomposition
\[ L_TBG \simeq \bigsqcup_{[g] \in T} BC_G(g). \]

\begin{prop}\label{sub-frob-lift}
    Suppose that $T$ is closed under conjugation and $g \mapsto g^n$ for all $n > 0$ (resp. closed under conjugation and $g \mapsto g^p$). Then the action of $\mathbb{W}^\mathrm{op}$ (resp., $\mathbb{W}_p^\mathrm{op}$) on $LBG$ restricts to $L_TBG$, so that $L_TBG$ is a space with Frobenius lifts (resp., a space with a $p$-Frobenius lift).
\end{prop}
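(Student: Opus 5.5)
The plan is to use that $L_TBG$ is a union of path components of $LBG$, so that a monoid action restricts to it exactly when it preserves the corresponding subset of $\pi_0$. I work with $\mathbb{W}$; the $\mathbb{W}_p$ case is identical with $\mathbb{N}^\times$ replaced by $\mu_p$.

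First, in the $\infty$-category $\mathcal{S}$, a subspace given by a union of components $U \subseteq \pi_0(X)$ is a subobject, i.e.\ a $(-1)$-truncated map $X_U \hookrightarrow X$. Mapping spaces into it are computed as
\[
\Map(Y, X_U) \simeq \Map(Y,X) \times_{\Map(\pi_0 Y, \pi_0 X)} \Map(\pi_0 Y, U).
\]
So, for a (right) action of a monoid $M$ on $X$, i.e.\ a functor $F\colon BM^{\mathrm{op}}\to\mathcal{S}$ with value $X$, the following holds. If every $m\in M$ sends $U$ into $U$ on $\pi_0$, then the full subfunctor with value $X_U$ is well defined. Concretely, the subobjects of $F$ in $\fun(BM^{\mathrm{op}},\mathcal{S})$ correspond to $M$-stable subsets of $\pi_0$, because subobject lattices of a presheaf are computed pointwise and must be closed under the action maps. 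Higher coherences come for free: being a subobject is a property, so the $(-1)$-truncated inclusion $X_U\to X$ lifts uniquely to a natural transformation once the action maps factor through $X_U$.

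Second, I check the stability condition using Proposition~\ref{cent-decomp}. The monoid $\mathbb{W}=S^1\rtimes\mathbb{N}^\times$ is generated by $S^1$ and $\mathbb{N}^\times$. The $S^1$ part acts trivially on $\pi_0$, since $S^1$ is connected, so its action maps are homotopic to the identity on $\pi_0$. The element $n$ sends the component $[g]$ to $[g^n]$, and $[g^n]\in T$ whenever $g\in T$ by the closure hypothesis (conjugation closure ensures $T$ is a union of conjugacy classes, so $L_TBG$ is defined). Hence every element of $\mathbb{W}$ preserves $\pi_0(L_TBG)$. For $\mathbb{W}_p$ only the powers $p^k$ occur, and closure under $g\mapsto g^p$ suffices by iteration.

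The only real obstacle is making the first step rigorous at the $\infty$-categorical level, i.e.\ justifying that a $\pi_0$-stable union of components inherits the full coherent action rather than only the individual action maps. I will handle this by noting that $\mathcal{P}_\mathcal{S}(B\mathbb{W})$ is a topos, in which monomorphisms into $LBG$ correspond to subsets of the presheaf $\pi_0(LBG)$ of sets on the homotopy category of $B\mathbb{W}$ that are stable under its morphisms. The $\mathbb{W}$-stable subset $\pi_0(L_TBG)$ therefore determines the subobject $L_TBG$, and this subobject is by construction an object of $\mathcal{S}^{\mathrm{Fr}}$ (resp.\ $\mathcal{S}^{p\mathrm{Fr}}$) whose underlying space is $L_TBG$.
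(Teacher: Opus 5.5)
Your proposal is correct and follows the same route as the paper: since $L_TBG\subseteq LBG$ is a union of components, it suffices to check that $\mathbb{W}$ (resp.\ $\mathbb{W}_p$) preserves $\pi_0(L_TBG)$. That follows from Proposition~\ref{cent-decomp}, together with the fact that the connected $S^1$ acts trivially on $\pi_0$. Your extra step makes explicit what the paper takes for granted: because subobjects in $\mathcal{P}_\mathcal{S}(B\mathbb{W})$ are determined pointwise on $\pi_0$, a $\pi_0$-stable union of components inherits the full coherent action.
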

\begin{proof}
    The inclusion $L_TBG \subseteq LBG$ is an inclusion of components, so it suffices to see that the action of $\mathbb{W}$ (resp $\mathbb{W}_p$) on $\pi_0(LBG)$ restricts to an action on $\pi_0(L_TBG)$. But this is immediate from Proposition \ref{cent-decomp} (note that $S^1$ necessarily acts trivially on $\pi_0(LBG)$).
\end{proof}

\begin{remark}
    Observe that the structure of Frobenius lifts does not carry the information corresponding to the inversion $[g] \mapsto{} [g^{-1}]$. This is because we only remember the action by $\mathbb{W} \subseteq \mathrm{End}(S^1)$, not the entire monoid $\mathrm{End}(S^1)\simeq S^1 \rtimes (\mathbb{Z},\times)$.
\end{remark}

\begin{example}
    Let $N \leq G$ be a normal subgroup of $G$. Then $N$ is closed under conjugation and $g \mapsto{} g^n$, so $L_NBG$ is a space with Frobenius lifts.
\end{example}
\begin{example}\label{ex-lbg-decomp}
    Let $G=S_3$ and $p=3$. There are $3$ conjugacy classes ($[1]$, $[(123)]$, $[(12)]$), and the subset $[(12)]$ is closed under $g \mapsto g^3$. Thus we obtain a decomposition
    \[ LBS_3 \simeq L_{[1]\cup[(123)]}BS_3 \sqcup  L_{[(12)]}BS_3\]
    as spaces with a $p$-Frobenius lift. From the centraliser decomposition, we can see that the inclusion $C_2 \xrightarrow{} S_3$ induces an equivalence $L_{\{g\}}BC_2 \xrightarrow{\simeq} L_{[(12)]}BS_3$ (where $g$ is the non-trivial element of $C_2$).
\end{example}

\subsection{Assembly theorems for $LBG$}
We now prove an assembly theorem for the free loop space. In order to make the statement self contained, we introduce a few concepts from equivariant homotopy theory.

\begin{definition}
    Let $\mathcal{O}(G)$ (the orbit category of $G$) be the category of transitive $G$-sets. For $\mathcal{F}$ a family of subgroups closed under conjugation and subgroups, let $\mathcal{O}_{\mathcal{F}}(G)$ be the full subcategory of $G$-sets with isotropy in $\mathcal{F}$. We will denote the family of cyclic subgroups by $\mathcal{F}_{\text{cyc}}$.
\end{definition}

There is a functor from $\mathcal{O}(G)$ to spaces given by $X \mapsto X_{hG}$. This sends the $G$-set $G/H$ to $BH$. Note that although $BH$ is naturally pointed, this functor does not preserve basepoints. For any $\mathcal{C}$ we obtain via restriction functors
\[ \text{Fun}(\mathcal{S}, \mathcal{C}) \xrightarrow{} \text{Fun}(\mathcal{O}(G), \mathcal{C}) \xrightarrow{} \text{Fun}(\mathcal{O}_{\mathcal{F}}(G), \mathcal{C}). \]
For $F: \mathcal{S} \xrightarrow{} \mathcal{C}$, when the context is clear we will also refer to the resulting functor $\mathcal{O}_{\mathcal{F}}(G) \xrightarrow{} \mathcal{C}$ as $F$. It sends the $G$-set $G/H$ to $F((G/H)_{hG})\simeq F(BH)$. This construction is standard (see for example \cite{rezk2014global}). We obtain an assembly map over the subgroups in $\mathcal{F}$
 \[ \underset{\mathcal{O}_{\mathcal{F}}(G)}{\colim} \ F(BH) \xrightarrow{} F(BG).\]
When this is an equivalence, we can hope to reconstruct the value of $F$ on $BG$ from its values on $BH$ as $H$ varies over the subgroups in $\mathcal{F}$. In particular, in this case there is a homotopy colimit spectral sequence (when $\mathcal{C}=\text{Sp}$, for example):
\[ E^2_{p,q}=\underset{\mathcal{O}_{\mathcal{F}}(G)}{\text{colim}}^p \ \pi_q(F(BH)) \Rightarrow \pi_{p+q}(F(BG)).\]

We now prove a general assembly theorem for $L_TBG$ (recall that $L_TBG$ is the full subgroupoid of $LBG$ indexed by conjugacy classes lying in $T \subseteq G$).

\begin{prop}[Assembly for the free loop space]\label{induct-theta}
    Let $\mathcal{F}$ be a family of subgroups and set $T = \bigcup_{H \in \mathcal{F}} H \subseteq G$. Then the assembly map for the functor $L(-): \mathcal{S} \xrightarrow{} \mathcal{S}^\mathrm{Fr}$ gives an equivalence of spaces with Frobenius lifts onto $L_TBG$:
    \[ \underset{\mathcal{O}_{\mathcal{F}}(G)}{\colim} \ LBH \xrightarrow{\simeq} L_TBG \subseteq LBG. \]
\end{prop}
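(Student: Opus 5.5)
First I will reduce to a statement about underlying spaces. The forgetful functor $\mathcal{S}^\mathrm{Fr} = \fun(B\mathbb{W}^\mathrm{op},\mathcal{S}) \to \mathcal{S}$ preserves colimits, since colimits in functor categories are computed pointwise, and it is conservative. It therefore suffices to show two things: the assembly map lands in $L_TBG$ as a map with Frobenius lifts, and it is an equivalence on underlying spaces. The first point is formal. By Proposition \ref{cent-decomp}, the component of $LBH$ indexed by $[h]$ maps to the component of $LBG$ indexed by $[h]_G$, and $h \in H \subseteq T$. The assembly map is natural in $L(-)$, hence $\mathbb{W}$-equivariant, and $L_TBG$ is a union of components, so the map factors through $L_TBG$ in $\mathcal{S}^\mathrm{Fr}$. (Note that $T$ is closed under conjugation and powers, since $\mathcal{F}$ is closed under conjugation and subgroups; this is consistent with Proposition \ref{sub-frob-lift}.)

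For the underlying equivalence I will use the groupoid model from the proof of Proposition \ref{cent-decomp}. The key identification is $LBH \simeq (LBG \times_{BG} BH)$, more precisely $L((G/H)_{hG}) \simeq (\mathrm{Map}(S^1, G/H))_{hG}$. Indeed, $L$ commutes with the homotopy orbit construction for $G$-sets: a free loop in $X_{hG}$ is a point of $BG$ with a loop, together with a lift to $X$ along the monodromy. Concretely, $LBG \simeq (G^{\mathrm{ad}})_{hG}$, where $G^{\mathrm{ad}}$ is $G$ with the conjugation action. Under this model, $L(X_{hG}) \simeq \{(g,x) : g\in G,\ x\in X,\ gx=x\}_{hG}$, naturally in the $G$-set $X$. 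For $X = G/H$ this is the $G$-set $E(G/H) := \{(g, aH) : a^{-1}ga \in H\}$ with diagonal action.

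The colimit over $\mathcal{O}_\mathcal{F}(G)$ then becomes $\colim_{G/H}\, E(G/H)_{hG} \simeq \bigl(\colim_{G/H} E(G/H)\bigr)_{hG}$, because homotopy orbits commute with colimits. This is a colimit of $G$-spaces in $\fun(BG,\mathcal{S})$, which I compute fiberwise over each $g \in G$. The fiber over $g$ is the functor $G/H \mapsto (G/H)^{\langle g\rangle}$. Its colimit over $\mathcal{O}_\mathcal{F}(G)$ is the realization of the functor $\mathcal{O}_\mathcal{F}(G) \to \mathrm{Set}$, $G/H \mapsto \mathrm{Map}_G(G/\langle g\rangle, G/H)$, whenever $\langle g \rangle \in \mathcal{F}$. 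The colimit of a corepresentable functor is contractible, since $G/\langle g\rangle$ is initial in the relevant slice. If $g \notin T$, then $(G/H)^{\langle g\rangle}=\emptyset$ for all $H\in\mathcal{F}$, so the fiber is empty. Hence $\colim E(G/H)\simeq T \subseteq G^{\mathrm{ad}}$, and taking $hG$ gives $L_TBG$.

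The main obstacle is the fiberwise colimit step. Two points need care. First, the colimit of $G/H \mapsto (G/H)^{C}$ over $\mathcal{O}_\mathcal{F}(G)$ is contractible for $C\in\mathcal{F}$. This follows from the Yoneda lemma: the colimit of the corepresentable $\mathrm{Hom}(G/C,-)$ is $*$. Second, the fiberwise decomposition has to be made compatible with the $G$-action. I will handle this by working with the functor $\mathcal{O}_\mathcal{F}(G)\to \fun(BG,\mathcal{S})_{/G^{\mathrm{ad}}}$ and using that colimits in an $\infty$-topos are universal, i.e. stable under pullback along $\{g\}\to G^{\mathrm{ad}}$. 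I also need the naturality of $L(X_{hG})\simeq E(X)_{hG}$ in $X$, which I would verify directly in the groupoid model.
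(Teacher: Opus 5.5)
Your proof is correct. It uses the same key fact as the paper, packaged with different tools.

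\textbf{The two routes.} The paper uses Thomason's theorem to present the colimit as the nerve of the Grothendieck construction $\int_{\mathcal{O}_{\mathcal{F}}(G)} LBH$, whose objects are triples $(X,x,g)$ with $g\cdot x=x$. It then applies Quillen's Theorem A to the functor into the groupoid $L_TBG$. The slice over $g$ is identified with the poset $\{H\in\mathcal{F} : g\in H\}$, which is contractible because it has the minimum $\langle g\rangle$.

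You instead use the inertia model $L(X_{hG})\simeq E(X)_{hG}$, natural in $X$. You pull $(-)_{hG}$ out of the colimit and compute the colimit of $G$-sets over $G^{\mathrm{ad}}$ fibrewise, using universality of colimits. Your fibre over $g$ is the corepresentable functor $\mathrm{Hom}(G/\langle g\rangle,-)$. Its category of elements is exactly the paper's slice poset.

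So the combinatorial input is the same in both: $(G/\langle g\rangle, e\langle g\rangle)$ is initial among pointed orbits in $\mathcal{O}_{\mathcal{F}}(G)$ whose base point is fixed by $g$.

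\textbf{What each buys.} Your route is model-independent. It avoids writing down the morphisms of the Grothendieck construction and invoking the $1$-categorical Theorem A. It also gives the slightly sharper equivariant statement $\colim_{\mathcal{O}_{\mathcal{F}}(G)} E(G/H)\simeq T$ as $G$-sets, before taking orbits. You also make explicit why the assembly map factors through $L_TBG$ in $\mathcal{S}^{\mathrm{Fr}}$, which the paper leaves implicit. The paper's argument is more hands-on and classical.

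\textbf{Small slips to fix.} Your heuristic $LBH\simeq LBG\times_{BG}BH$ is false. That pullback drops the requirement that the lift be fixed by the monodromy: its fibre over a loop with monodromy $g$ is $G/H$ rather than $(G/H)^{\langle g\rangle}$. Likewise, $\mathrm{Map}(S^1,G/H)$ is just $G/H$. The concrete formula you actually use, $E(X)=\{(g,x): gx=x\}$, is the correct one.

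Also, $\{g\}\to G^{\mathrm{ad}}$ is not $G$-equivariant when $g$ is non-central. Either do the fibrewise step after forgetting to $\mathcal{S}$ (the forgetful functor preserves colimits and pullbacks and is conservative), or pull back along the orbit $G/C_G(g)\to G^{\mathrm{ad}}$ instead.
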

\begin{proof}
    Colimits and equivalences in the presheaf category $\mathcal{S}^\mathrm{Fr}$ are computed underlying, so it is enough to show that the assembly map is an equivalence of spaces. The colimit can be described as the geometric realisation of the Grothendieck construction $\int_{\mathcal{O}_\mathcal{F}(G)}LBH$ (\cite[Theorem 1.2]{thomason1979homotopy}), which is a $1$-category with an explicit description in terms of objects and morphisms.

    The objects in $\int_{\mathcal{O}_\mathcal{F}(G)}LBH$ are given by triples $(X,x,g)$, where $X \in \mathcal{O}_\mathcal{F}(G)$ is a transitive $G$-set with isotropy in $\mathcal{F}$, $x\in X$ is a point in $X$, $g \in G$ is such that $g\cdot x=x$. A map from $(X, x, g)$ to $(Y,y,h)$ is a pair $(\phi, \alpha)$, where $\phi:X \xrightarrow{} Y$ is a map of $G$-sets, and $\alpha \in G$ is such that $\alpha \cdot \phi(x)=y$ and $h\alpha=\alpha g$. 

    Let $F$ denote the functor $\int_{\mathcal{O}_\mathcal{F}(G)}LBH \xrightarrow{} L_TBG$ induced by $LBH \xrightarrow{} L_TBH \subseteq LBG$. By Quillen's theorem A (\cite{quillen2006higher}), it is enough to see that for any object $g \in LBG$, the slice category $F/g$ is contractible. But from the above description of $\int_{\mathcal{O}_\mathcal{F}(G)}LBH$, the slice category is equivalent to the following poset:
    \[ F/g \simeq \{H \in \mathcal{F} \ | \ g \in H \} \]
    But by assumption, the poset is non-empty, and it has a minimal element, since it is closed under intersections. Hence the geometric realisation of the poset is contractible, as required, and the assembly map onto $L_TBG$ is an equivalence.
\end{proof}

We record two special cases. The first is the most important example of assembly for the free loop space: it says that $LBG$ may be recovered as the colimit over cyclic subgroups. Later we will deduce corresponding cyclic assembly results for invariants such as $\thh$ from the result for $LBG$.

\begin{cor}
    Let $\mathcal{F}_\mathrm{cyc}$ be the collection of cyclic subgroups. The assembly map for $L(-)$ over $\mathcal{F}_\mathrm{cyc}$ is an equivalence of spaces with Frobenius lifts:
    \[ \underset{\cyclicorbit(G)}{\colim} \ LBH \xrightarrow{\simeq} LBG. \]
\end{cor}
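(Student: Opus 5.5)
This is a direct specialisation of Proposition \ref{induct-theta}, taking $\mathcal{F} = \mathcal{F}_\mathrm{cyc}$. Two things need checking: that $\mathcal{F}_\mathrm{cyc}$ is an admissible family, and that the associated subset $T$ is all of $G$.

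\emph{Admissibility.} A conjugate of a cyclic subgroup is cyclic, and so is any subgroup of a cyclic group. Hence $\mathcal{F}_\mathrm{cyc}$ is closed under conjugation and under passing to subgroups, as the definition of a family requires.

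\emph{The subset $T$.} Set $T = \bigcup_{H \in \mathcal{F}_\mathrm{cyc}} H$. Every $g \in G$ lies in the cyclic subgroup $\langle g \rangle \in \mathcal{F}_\mathrm{cyc}$, so $T = G$. By Definition \ref{LTBG-def}, $L_TBG$ consists of the components of $LBG$ indexed by conjugacy classes contained in $T$. Since $T = G$, this is every component, so $L_GBG = LBG$ as subspaces with Frobenius lifts. Here $G$ is trivially closed under conjugation and under $g \mapsto g^n$, so Proposition \ref{sub-frob-lift} applies and the Frobenius lift structure on $L_GBG$ is the full structure on $LBG$.

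\emph{Conclusion.} Proposition \ref{induct-theta} now says that the assembly map $\colim_{\cyclicorbit(G)} LBH \to L_GBG = LBG$ is an equivalence in $\mathcal{S}^\mathrm{Fr}$, which is exactly the statement of the corollary.

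There is no substantive obstacle here; the only point needing care is the identification $T = G$.

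If one checks the contractibility step of Proposition \ref{induct-theta} directly in this case, the slice poset $\{H \text{ cyclic} \mid g \in H\}$ has least element $\langle g \rangle$, so it is non-empty with an initial object and hence contractible.
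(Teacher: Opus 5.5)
Your proposal is correct and matches the paper, which records this corollary as the special case $\mathcal{F}=\mathcal{F}_\mathrm{cyc}$ of Proposition~\ref{induct-theta} without further argument. The one thing to verify is $T=G$, and you handle it correctly via $g\in\langle g\rangle$.
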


The second occurs where we take $\mathcal{F}$ to be the family of subgroups of a normal subgroup $N$.

\begin{cor}\label{cor-LBN}
    Suppose $N$ is a normal subgroup of $G$. Then there is an equivalence of spaces with Frobenius lift:
    \[LBN_{hG/N} \xrightarrow{\simeq} L_NBG\]

\end{cor}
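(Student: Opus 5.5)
We apply Proposition \ref{induct-theta} to the family $\mathcal{F}_N$ of all subgroups of $N$. Since $N$ is normal, $\mathcal{F}_N$ is closed under conjugation and under passage to subgroups, so it is a family. The union of its members is $T=\bigcup_{H\le N}H=N$. Proposition \ref{induct-theta} then gives an equivalence of spaces with Frobenius lifts
\[ \underset{\mathcal{O}_{\mathcal{F}_N}(G)}{\colim}\ LBH \xrightarrow{\simeq} L_NBG. \]
The remaining work is to identify the left-hand colimit with $LBN_{hG/N}$.

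To do this, we observe that $G/N$ is a terminal-type object in the relevant sense. The $G$-set $G/N$ has isotropy $N\in\mathcal{F}_N$, and every orbit $G/H$ with $H\le N$ maps to it via $G/H\to G/N$. More precisely, we use the $G$-set $G/N$ viewed inside $\mathcal{O}_{\mathcal{F}_N}(G)$, together with its automorphism group. We have $\mathrm{Aut}_G(G/N)=N_G(N)/N=G/N$, which gives a functor $B(G/N)\to\mathcal{O}_{\mathcal{F}_N}(G)$.

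We claim this functor is cofinal. By Quillen's Theorem A in its $\infty$-categorical form, it suffices to check the following: for each object $G/H$ of $\mathcal{O}_{\mathcal{F}_N}(G)$, the comma category of maps $G/H\to G/N$ under the $G/N$-action is weakly contractible. The set $\mathrm{Map}_G(G/H,G/N)$ is the set $(G/N)^H$. Since $H\le N$ acts trivially on $G/N$, this set is all of $G/N$. The group $G/N$ acts on it freely and transitively, so the homotopy quotient is contractible. Cofinality then gives
\[ \underset{\mathcal{O}_{\mathcal{F}_N}(G)}{\colim}\ LBH \simeq \underset{B(G/N)}{\colim}\ LBN = LBN_{hG/N}. \]
The action of $G/N$ on $LBN$ here is induced by functoriality of $L(-)$ applied to the $G/N$-action on $BN=(G/N)_{hG}$, and the composite with the assembly map is the evident map.

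Because colimits in $\mathcal{S}^{\mathrm{Fr}}$ are computed pointwise and $L(-)$ is applied objectwise, this identification holds in $\mathcal{S}^{\mathrm{Fr}}$ and not merely on underlying spaces. We expect the main care to be needed in the cofinality check, specifically in the correct variance, that is, whether the slice category over or under each object is the relevant one. If that check becomes awkward, a fallback is available: compare the Grothendieck-construction description used in the proof of Proposition \ref{induct-theta} directly with the action groupoid of $G/N$ on $\int LBN$. Since both have the same slice categories over an object $g\in N$, contractibility follows as before.
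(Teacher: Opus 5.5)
Your proposal is correct and follows the paper's proof. Like the paper, you apply Proposition \ref{induct-theta} to the family of subgroups of $N$ and then use that the full subcategory on $G/N$ is cofinal in $\mathcal{O}_{\mathcal{F}}(G)$. Your check of that cofinality supplies what the paper only asserts: the undercategory at $G/H$ is the action groupoid of $G/N$ acting freely and transitively on $(G/N)^H = G/N$, and this is the correct variance, so the fallback argument is unnecessary.
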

\begin{proof}
    Take $\mathcal{F}=\{H \ | \ H \leq N\}$. By Proposition \ref{induct-theta}, we have that 
    \[ \underset{\mathcal{O}_{\mathcal{F}}(G)}{\colim} \ LBH \xrightarrow{\simeq} L_NBG. \]
    But the full subcategory on $G/N$ is cofinal in $\mathcal{O}_{\mathcal{F}}(G)$, so we have an equivalence
    \[ \underset{\mathcal{O}_{\mathcal{F}}(G)}{\colim} \ LBH \simeq LBN_{h{G/N}} \]
    as required.
\end{proof}

\section{$\thh$ and $\tc$ of group rings}
We now use our analysis of $LBG$ to deduce results on $\thh$ and $\tc$ of group rings. We begin with some background on these invariants, and how they can be described in the case of group rings. We then describe the filtrations on $\thh$ and $\tc$ arising from the filtration on $LBG$, and analyse the resulting spectral sequence for $\tc(R[G])$. Finally, we use Proposition \ref{induct-theta} to prove assembly theorems for $\thh$ and $\tc$. 

\subsection{Cyclotomic structures and Frobenius lifts}
For a ring $R$, the topological Hochschild homology $\thh(R)$ is the Hochschild homology relative to the sphere spectrum. Importantly, $\thh(R)$ has the structure of a cyclotomic spectrum.  Recall that a cyclotomic spectrum (in the sense of \cite{nikolaus2018topological}) is a spectrum $X$ with $S^1$-action, along with $S^1$-equivariant maps $X \xrightarrow{} X^{tC_p}$ for each prime $p$, where $(-)^{tC_p}$ denotes the $C_p$-Tate construction. We then define the topological cyclic homology $\tc(R)$ as the mapping spectrum in the category of cyclotomic spectra from the unit to $\thh(R)$. For a full account of $\thh$, $\tc$ and cyclotomic spectra, see \cite{nikolaus2018topological}. In the case where $R$ is a group ring, the cyclotomic structure admits a particularly convenient formulation in terms of the free loop space $LBG$, as we now describe.

The category of spectra with Frobenius lifts is deeply connected to the category of cyclotomic spectra. This is what motivates the study of objects with Frobenius lifts. If $X$ is a spectrum with Frobenius lifts, then we can (in particular) produce equivariant maps $X \xrightarrow{} X^{hC_p}$, using the action of $p \in \mathbb{N}^\times \subseteq \mathbb{W}$. Postcomposing the Frobenius lifts $X \xrightarrow{} X^{hC_p}$ with the canonical maps $X^{hC_p} \xrightarrow{} X^{tC_p}$ equips $X$ with a cyclotomic structure. We will also refer to the resulting cyclotomic spectrum as $X$ (although note that spectra with Frobenius lifts are not a subcategory of cyclotomic spectra).

\begin{definition}
    Let the canonical functor $\Sp^\mathrm{Fr} \xrightarrow{} \CycSp$ be the functor that gives $X$ a cyclotomic structure via the maps $X \xrightarrow{} X^{hC_p} \xrightarrow{} X^{tC_p}$ for each $p$.

    Similarly, let the canonical functor $\Sp^{p\mathrm{Fr}} \xrightarrow{} p\CycSp$ be the functor that gives $X$ a $p$-cyclotomic structure via the map $X \xrightarrow{} X^{hC_p} \xrightarrow{} X^{tC_p}$.

    For a formal construction, see \cite[Construction 2.3.7]{mccandless2021curves}.
\end{definition}

Now we may consider the composite
\[ \mathcal{S} \xrightarrow{L(-)} \mathcal{S}^\mathrm{Fr} \xrightarrow{} \Sp^\mathrm{Fr} \xrightarrow{} \CycSp. \]
This assigns to a space $X$ the spectrum $\mathbb{S}[LX]$, so we observe that $\mathbb{S}[LX]$ carries a canonical cyclotomic structure. Importantly, the cyclotomic spectrum $\mathbb{S}[LX]$ has an alternative description in terms of $\thh$. The based loop space $\Omega X$ is an $\mathbb{E}_1$-group, and the suspension spectrum (or group ring) $\mathbb{S}[\Omega X]$ is thus an algebra in spectra. Taking $\thh$ of this algebra recovers the cyclotomic spectrum $\mathbb{S}[LX]$.

\begin{theorem}[$\thh$ of group rings]\label{thh-group-rings}
    For $X \in \mathcal{S}$ connected, there is an equivalence of cyclotomic spectra
    \[ \thh(\mathbb{S}[\Omega X]) \simeq  \mathbb{S}[LX]. \]
\end{theorem}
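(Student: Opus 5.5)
The plan is to reduce to the cyclic bar construction and then match two descriptions of the Frobenius: the one coming from the Tate diagonal, and the one coming from the Witt monoid action on $LX$. Since $X$ is connected, $X \simeq BM$ for the group-like $\mathbb{E}_1$-space $M = \Omega X$, so $\mathbb{S}[\Omega X] = \Sigma^\infty_+ M$. Recall that $\thh$ of an $\mathbb{E}_1$-ring $A$ is the geometric realisation of the cyclic bar construction $[n]\mapsto A^{\otimes (n+1)}$, viewed as a cyclic object. Its $S^1$-action comes from the cyclic structure, and its cyclotomic Frobenius comes from the Tate diagonal combined with $p$-fold edgewise subdivision, as in \cite{nikolaus2018topological}.

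\textbf{Step 1 (underlying $S^1$-spectra).} Because $\Sigma^\infty_+$ is symmetric monoidal and commutes with colimits, the cyclic bar construction of $\Sigma^\infty_+ M$ is $\Sigma^\infty_+$ applied levelwise to the unstable cyclic bar construction $B^{\mathrm{cyc}}_\bullet M$. Hence
\[ \thh(\mathbb{S}[M]) \simeq \mathbb{S}[|B^{\mathrm{cyc}}_\bullet M|] \]
$S^1$-equivariantly. It then remains to produce an $S^1$-equivariant equivalence $|B^{\mathrm{cyc}}_\bullet M| \simeq L BM$. For a group-like $M$ this is the classical argument. The map $B^{\mathrm{cyc}}_\bullet M \to B_\bullet M$ that forgets the zeroth coordinate is a fibration over $BM$ with fibre $M$ and with monodromy given by conjugation. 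This identifies the total space with the space of pairs (point of $BM$, loop at it), that is, with $LBM$. Equivariance holds because both $S^1$-actions arise from the cyclic category, via Connes' identification $|\Lambda^{\mathrm{op}}$-objects$|$ $\simeq$ $S^1$-spaces. A cleaner route is to check the equivalence on the generator $M = \Omega$ of a free group-like monoid and extend by sifted-colimit preservation, since both sides preserve sifted colimits in $M$ among group-like monoids.

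\textbf{Step 2 (Frobenius).} For a suspension spectrum, the Tate diagonal $\mathbb{S}[Y] \to (\mathbb{S}[Y]^{\otimes p})^{tC_p}$ factors as
\[ \mathbb{S}[Y] \to \mathbb{S}[(Y^p)^{hC_p}] \to \mathbb{S}[Y^p]^{hC_p} \to \mathbb{S}[Y^p]^{tC_p}, \]
where the first map is induced by the space-level diagonal $Y \to (Y^{p})^{hC_p}$ \cite{nikolaus2018topological}. Applying this levelwise to the $p$-subdivided cyclic bar construction shows that the cyclotomic Frobenius of $\thh(\mathbb{S}[M])$ is $\mathbb{S}[-]$ of an unstable map $\varphi_p: B^{\mathrm{cyc}}M \to (B^{\mathrm{cyc}}M)^{hC_p}$, followed by the canonical map from homotopy fixed points to the Tate construction. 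This is exactly the shape of the canonical functor $\Sp^{\mathrm{Fr}} \to \CycSp$ above. It therefore remains to show that, under Step 1, $\varphi_p$ agrees $S^1$-equivariantly with the Frobenius lift $LX \to LX^{hC_p}$ given by $p \in \mathbb{N}^\times \subseteq \mathbb{W}$, i.e.\ precomposition with the $p$-fold cover of $S^1$.

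\textbf{Main obstacle.} The hard part is making this identification coherent: matching the edgewise-subdivision diagonal on $B^{\mathrm{cyc}}M$ with the $p$-th power map on loops, compatibly with the $S^1 \simeq S^1/C_p$ actions and for all primes simultaneously. I would first try to argue by universality. Both constructions are natural in the connected space $X$, so they define functors $\mathcal{S}_{\geq 1} \to \CycSp$ that preserve sifted colimits (indeed all colimits, after accounting for basepoints). Both have the same underlying $S^1$-spectrum by Step 1, so it suffices to compare the Frobenius maps on a generator such as $X = S^1$, or more generally to compare the unstable maps as natural transformations of functors out of $\mathcal{S}$. Such transformations are controlled by their value on a point, together with the $\mathrm{End}(S^1)$-action on $\Map(S^1,-)$. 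If this rigidification becomes too delicate, I would fall back on the explicit identifications in \cite[IV.3]{nikolaus2018topological} and \cite[Construction 2.3.7]{mccandless2021curves}, which carry out exactly this comparison.
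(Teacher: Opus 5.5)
Your outline is the Nikolaus--Scholze argument itself: the cyclic bar construction, the factorisation of the Tate diagonal of a suspension spectrum through the space-level diagonal, and the identification of the resulting unstable Frobenius on $B^{\mathrm{cyc}}\Omega X$ with the $p$-fold cover map on $LX$. Your fallback is exactly the reference the paper's proof consists of, \cite[Corollary IV.3.3]{nikolaus2018topological}, so this is essentially the paper's approach. One caveat on your optional universality route: $X \mapsto \mathbb{S}[LX]$ does not preserve all colimits (e.g.\ $L(S^1\vee S^1)\simeq LBF_2$ has components indexed by all conjugacy classes of $F_2$), and a sifted-colimit reduction would require comparing the Frobenii naturally on the whole subcategory of free group-like monoids (finite wedges of circles), not just at $X=S^1$.
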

\begin{proof}
    See \cite[Corollary IV.3.3]{nikolaus2018topological}. Note that they only consider the $p$-local case, but the global case follows immediately from their proof.
\end{proof}

The above theorem lets us compute $\thh$ of spherical group rings via the free loop space. Similarly, if $R\in \mathrm{CAlg}(\Sp)$ is any ring of coefficients, by symmetric monoidality of $\thh$ we obtain an equivalence of $\thh(R)$-modules in cyclotomic spectra
\[ \thh(R[\Omega X]) \simeq \thh(R) \otimes \thh(\mathbb{S}[\Omega X]) \simeq \thh(R) \otimes \mathbb{S}[LX]. \]
Hence we can also understand the cyclotomic spectrum $\thh(R[\Omega X])$ via the free loop space. In the next part of this section, we analyse the space with Frobenius lifts $LBG$ (for a discrete group $G$), and use this to produce decompositions and filtrations of $\thh(R[G])$.

\begin{remark} 
    We can immediately deduce the result for $\thh$ relative to $R$. The equivalence 
    \[  \thh(R[\Omega X]) \simeq \thh(R) \otimes \mathbb{S}[LX] \]
    is (in particular) an equivalence of $\thh(R)$-modules in $\Sp$, so we can compute
    \[ \thh(R[\Omega X]/R) \simeq \thh(R[\Omega X])\otimes_{\thh(R)} R \simeq \mathbb{S}[LX] \otimes R \simeq R[LX]. \]
    The right hand side, $R[LX]$, is an $R$-module with Frobenius lifts. Relative $\thh$ does not have a cyclotomic structure, so this cannot be promoted to an equivalence of cyclotomic structures. It does, however, tell us that Hochschild homology of group algebras always carries the structure of Frobenius lifts (by transport of structure).
\end{remark}

\begin{example}
    Let $G=S_3$ and $p=3$. By Example \ref{ex-lbg-decomp}, there is a decomposition 
     \[ LBS_3 \simeq L_{[1]\cup[(123)]}BS_3 \sqcup  L_{[(12)]}BS_3\]
    as spaces with a $p$-Frobenius lift. Hence for $R \in \mathrm{CAlg}(\Sp)$ we have a decomposition
    \[ \thh(R[G]) \simeq \thh(R) \otimes \mathbb{S}[LBG]\simeq \thh(R)\otimes\mathbb{S}[L_{[1]\cup[(123)]}BS_3] \oplus \thh(R) \otimes \mathbb{S}[L_{[(12)]}BS_3] \]
    of $p$-cyclotomic spectra. Finally, applying $\tc$, we have a decomposition
    \[ \tc(R[G]) \simeq \tc\big(\thh(R)\otimes\mathbb{S}[L_{[1]\cup[(123)]}BS_3] \big) \oplus \tc\big( \thh(R) \otimes \mathbb{S}[L_{[(12)]}BS_3] \big). \]
\end{example}

\subsection{Assembly for $\thh$ and $\tc$}

We now turn to assembly theorems for $\thh$ and $\tc$. This will be deduced directly from the assembly results for $LBG$ (Proposition \ref{induct-theta}) and the equivalence $\thh(R[G]) \simeq \thh(R) \otimes \mathbb{S}[LBG]$.

The suspension functor $\mathcal{S}^\mathrm{Fr} \xrightarrow{} \Sp^\mathrm{Fr}$ gives rise to a functor $\mathcal{O}(G) \xrightarrow{} \Sp^\mathrm{Fr}$ sending $G/H$ to $\mathbb{S}[LBH]$. Postcomposing with the canonical functor and tensoring with $\thh(R)$ gives a functor $\mathcal{O}(G) \xrightarrow{} \CycSp$ sending $G/H$ to $\thh(R) \otimes \mathbb{S}[LBH]$. Observe that while $\thh(R) \otimes \mathbb{S}[LBH]$ is equivalent to $\thh(R[G])$, the functoriality over $\mathcal{O}(G)$ cannot be described on the level of rings.

\begin{prop}[Assembly for $\thh$]\label{induct-thh}
    Let $R \in \mathrm{CAlg}(\Sp)$ be a ring, and $\mathcal{F}$ a family of subgroups. Set $T = \bigcup_{H \in \mathcal{F}} H \subseteq G$. Then the assembly map for the functor
    \[ \thh(R) \otimes \mathbb{S}[L(-)]: \mathcal{S} \xrightarrow{} \CycSp \]
    gives an equivalence
    \[ \underset{\mathcal{O}_{\mathcal{F}}(G)}{\colim} \ \thh(R) \otimes \mathbb{S}[ LBH] \xrightarrow{\simeq} \thh(R) \otimes \mathbb{S}[ L_TBG] \]
    of cyclotomic spectra.
\end{prop}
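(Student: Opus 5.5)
The plan is to deduce the statement formally from Proposition \ref{induct-theta}, by checking that each functor in the composite
\[ \mathcal{S}^\mathrm{Fr} \xrightarrow{\mathbb{S}[-]} \Sp^\mathrm{Fr} \xrightarrow{} \CycSp \xrightarrow{\thh(R)\otimes -} \CycSp \]
preserves colimits. Proposition \ref{induct-theta} gives an equivalence $\colim_{\mathcal{O}_\mathcal{F}(G)} LBH \xrightarrow{\simeq} L_TBG$ in $\mathcal{S}^\mathrm{Fr}$. The assembly map for $\thh(R)\otimes\mathbb{S}[L(-)]$ is, by construction, the image of this map under the composite, followed by the inclusion of components $L_TBG \subseteq LBG$. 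So it suffices to show that the composite carries the colimit cocone in $\mathcal{S}^\mathrm{Fr}$ to a colimit cocone in $\CycSp$.

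I would verify this one functor at a time.

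\begin{enumerate}
\item $\mathbb{S}[-]\colon \mathcal{S}^\mathrm{Fr} = \fun(B\mathbb{W}^\mathrm{op},\mathcal{S}) \to \fun(B\mathbb{W}^\mathrm{op},\Sp)$ is postcomposition with a left adjoint. Colimits in functor categories are computed pointwise, so it preserves colimits.
\item For the canonical functor $\Sp^\mathrm{Fr} \to \CycSp$, recall from \cite{nikolaus2018topological} that colimits in $\CycSp$ are computed on underlying spectra with $S^1$-action. The structure maps of the colimit are the induced maps $\colim X_i \to \colim X_i^{tC_p} \to (\colim X_i)^{tC_p}$. The underlying $S^1$-spectrum of the image of $X \in \Sp^\mathrm{Fr}$ is obtained by restricting along $BS^1 \to B\mathbb{W}$, which preserves colimits. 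The cyclotomic Frobenius $X \to X^{hC_p} \to X^{tC_p}$ is natural in $X$, so the image of a colimit of Frobenius-lift spectra carries exactly the induced structure maps. Hence the functor preserves colimits. To be safe about naturality, I would argue using the formal construction \cite[Construction 2.3.7]{mccandless2021curves}, which realises the functor as a map of lax equalizers; colimits there are detected on the underlying object.
\item $\thh(R)\otimes-$ on $\CycSp$ preserves colimits, because the symmetric monoidal structure on $\CycSp$ is computed underlying and $\otimes$ commutes with colimits in each variable in $\Sp^{BS^1}$.
\end{enumerate}

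Combining these, the assembly map becomes an equivalence of cyclotomic spectra onto $\thh(R)\otimes\mathbb{S}[L_TBG]$, as claimed.

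The main obstacle is step 2. The canonical functor is not presented as a left adjoint, and the Tate construction does not commute with colimits in general, so one must be careful that the cyclotomic structure on the colimit is the induced one. The point to stress is that in $\CycSp$ the colimit's structure map is defined as the composite through $\colim X_i^{tC_p}$, so no commutation of $(-)^{tC_p}$ with colimits is ever needed. Alternatively, one can avoid this issue entirely: equivalences in $\CycSp$ are detected on underlying spectra, and the underlying spectrum of the assembly colimit is $\thh(R)\otimes\mathbb{S}[\colim LBH]$. That reduces everything to the underlying equivalence from Proposition \ref{induct-theta}.
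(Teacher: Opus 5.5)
Your proposal is correct and follows the paper's proof: the paper also deduces the statement from Proposition \ref{induct-theta} by noting that $\mathbb{S}[-]\colon \mathcal{S}^\mathrm{Fr}\to\Sp^\mathrm{Fr}$, the canonical functor $\Sp^\mathrm{Fr}\to\CycSp$, and $\thh(R)\otimes-$ all preserve colimits. Your justification of step 2 is a sound way to fill in what the paper asserts in one line: colimits in $\CycSp$ are computed on underlying $S^1$-spectra, the forgetful functor is conservative, and the composite to $\Sp^{BS^1}$ is just restriction.
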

\begin{proof}
    By Proposition \ref{induct-theta}, the map of spaces with Frobenius lifts
    \[ \underset{\mathcal{O}_{\mathcal{F}}(G)}{\colim} \ LBH \xrightarrow{} L_TBG. \]
    is an equivalence.
    
    The suspension functor $\mathbb{S}[-]: \mathcal{S}^\mathrm{Fr} \xrightarrow{} \Sp^\mathrm{Fr}$, the canonical functor $\Sp^\mathrm{Fr} \xrightarrow{} \CycSp$ and tensoring with $\thh(R)$ preserve colimits, so 
    \[ \underset{\mathcal{O}_{\mathcal{F}}(G)}{\colim} \ \thh(R) \otimes \mathbb{S}[ LBH] \simeq \thh(R) \otimes \mathbb{S}[\underset{\mathcal{O}_{\mathcal{F}}(G)}{\colim} \ LBH] \]
    as cyclotomic spectra.
\end{proof}
\begin{remark}
    The cyclotomic spectrum $\thh(R)$ is a commutative algebra in cyclotomic spectra. In the latter case of this proposition, we have an equivalence of $\thh(R)$-modules in cyclotomic spectra.
\end{remark}

After applying the equivalence $\thh(R) \otimes \mathbb{S}[ LBH] \simeq \thh(R[G])$, Proposition \ref{induct-thh} becomes 
\[ \underset{\mathcal{O}_{\mathcal{F}}(G)}{\colim} \ \thh(R[H]) \xrightarrow{\simeq} \thh(R) \otimes \mathbb{S}[ L_TBG] \]
and in particular if $T =G$,
\[ \underset{\mathcal{O}_{\mathcal{F}}(G)}{\colim} \ \thh(R[H]) \xrightarrow{\simeq} \thh(R[G]). \]
While the latter formulation is more intuitive, it obscures what exactly the functoriality in $\mathcal{O}_{\mathcal{F}}(G)$ is.

We may now show the assembly theorem for $\tc$, using that $\tc(-;p)$ commutes with colimits of connective spectra up to $p$-completion.

\begin{theorem}[Assembly for $\tc$]\label{induct-tc}
    Let $R \in \mathrm{CAlg}(\Sp)$ be connective, and $\mathcal{F}$ a collection of subgroups. Set $T = \bigcup_{H \in \mathcal{F}} H \subseteq G$. Then the assembly map for $\tc$ gives an equivalence after $p$-completion:
    \[ \underset{\mathcal{O}_{\mathcal{F}}(G)}{\colim} \ \tc \big(  \thh(R) \otimes \mathbb{S}[LBH];p\big) \xrightarrow{\simeq}  \tc \big(  \thh(R) \otimes \mathbb{S}[L_TBG];p\big). \]
    In particular, if $T=G$, then the assembly map is an equivalence after $p$-completion:
     \[ \underset{\mathcal{O}_{\mathcal{F}}(G)}{\colim} \ \tc \big(  \thh(R) \otimes \mathbb{S}[LBH];p\big) \xrightarrow{\simeq}  \tc \big(  \thh(R) \otimes \mathbb{S}[LBG];p\big) \simeq \tc(R[G];p). \]
\end{theorem}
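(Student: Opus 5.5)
We deduce the statement from Proposition \ref{induct-thh} together with the fact that $\tc(-;p)$ commutes with colimits of bounded-below cyclotomic spectra after $p$-completion. By Proposition \ref{induct-thh}, the assembly map
\[ \underset{\mathcal{O}_{\mathcal{F}}(G)}{\colim} \ \thh(R) \otimes \mathbb{S}[LBH] \xrightarrow{\simeq} \thh(R) \otimes \mathbb{S}[L_TBG] \]
is an equivalence in $\CycSp$. The colimit is taken in cyclotomic spectra. Since the forgetful functor to spectra (with $S^1$-action) preserves colimits, the colimit is computed underlying. Each term is connective, because $R$ is connective, so $\thh(R)$ is connective, and $\mathbb{S}[LBH]$ is connective. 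So the whole diagram lives in the full subcategory of connective cyclotomic spectra, which is closed under colimits.

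Next we apply $\tc(-;p)$, using the Nikolaus--Scholze formula
\[ \tc(X;p) \simeq \mathrm{fib}\big(X^{hS^1} \xrightarrow{\varphi^{hS^1}-\mathrm{can}} (X^{tC_p})^{hS^1}\big) \]
valid for bounded-below $X$, after $p$-completion. This is the key step and the main obstacle: showing that $X \mapsto \tc(X;p)^\wedge_p$ preserves colimits on connective $p$-cyclotomic spectra. Finite colimits are automatic, since $\tc$ is exact, so it suffices to treat filtered colimits, or sifted ones together with sums.

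For this we would invoke \cite[Theorem II.4.11 / Remark II.4.12]{nikolaus2018topological}. It identifies $\tc(X;p)$ with the mapping spectrum out of the unit and shows that, on $p$-complete bounded-below objects, $\tc$ equals the fibre of $\mathrm{can}-\varphi$ on $\mathrm{TC}^{-}$ and $\mathrm{TP}$. Both of these terms commute, after $p$-completion, with colimits of uniformly bounded-below diagrams. The reason is that on uniformly connective objects, $(-)^{hS^1}$ and $(-)^{tC_p}$ agree with their finite skeletal approximations in each fixed degree. Homotopy orbits commute with all colimits, and the Tate construction of a $p$-complete, uniformly bounded-below colimit is controlled by the Segal conjecture and Tate orbit lemma arguments of Nikolaus--Scholze. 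An alternative route avoids this: write $\tc(X;p)^\wedge_p \simeq \mathrm{fib}(\mathrm{TR}(X)\xrightarrow{1-F}\mathrm{TR}(X))^\wedge_p$, and show degreewise that $\mathrm{TR}$ commutes with colimits of connective objects. This holds because each $\mathrm{TR}^n$ is built from homotopy orbits via the isotropy separation fibre sequences.

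Granting this colimit-preservation, we apply $\tc(-;p)^\wedge_p$ to the equivalence above and commute it past the colimit. This gives the first claimed equivalence. For the case $T=G$, we have $L_TBG = LBG$. Combining with Theorem \ref{thh-group-rings} and the symmetric monoidality equivalence $\thh(R[G]) \simeq \thh(R)\otimes \mathbb{S}[LBG]$ of cyclotomic spectra, we identify the target with $\tc(R[G];p)$. Note that no finiteness assumption on $G$ or on $\mathcal{O}_{\mathcal{F}}(G)$ enters, only uniform connectivity. This is why $p$-completion suffices for arbitrary groups.
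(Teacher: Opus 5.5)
Your strategy matches the paper's: combine Proposition \ref{induct-thh} with the fact that $\tc(-)/p$ (equivalently $\tc(-;p)^\wedge_p$) commutes with colimits of connective cyclotomic spectra, which the paper takes from \cite[Theorem 2.7]{clausen2021}. The problem is that both justifications you sketch for this step rest on false claims.

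First, $\mathrm{TC}^-=(-)^{hS^1}$ and $\mathrm{TP}$ (and likewise $(-)^{tC_p}$) do not commute with colimits of uniformly connective diagrams, even after $p$-completion. The ``finite skeletal approximation'' argument works for homotopy orbits, not fixed points: for connective $X$, the group $\pi_n(X^{hS^1})$ receives contributions from $H^{2k}(BS^1;\pi_{n+2k}X)$ for every $k\geq 0$. Concretely, take $X_i=\Sigma^{2i}H\mathbb{F}_p$ with trivial action. Then $\bigoplus_i X_i\simeq\prod_i X_i$, so $\pi_0\big((\bigoplus_i X_i)^{hS^1}\big)\cong\prod_i\mathbb{F}_p$, whereas $\bigoplus_i\pi_0(X_i^{hS^1})\cong\bigoplus_i\mathbb{F}_p$. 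Both are already $p$-complete, so $p$-completion does not repair this.

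Second, your $\mathrm{TR}$ route fails for a similar reason. Each $\mathrm{TR}^n$ does commute with colimits, but $\mathrm{TR}=\lim_R\mathrm{TR}^n$ does not. Give a connective spectrum $X$ with $S^1$-action the zero Frobenius. Then the pullbacks defining $\mathrm{TR}^{n}$ split, and $\mathrm{TR}(X)\simeq\prod_{k\geq0}X_{hC_{p^k}}$ ($p$-adically), which does not commute with infinite sums. Yet $\tc(X;p)\simeq(\Sigma X_{hS^1})^\wedge_p$ does.

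So only the fibre of $\varphi^{hS^1}-\mathrm{can}$ (or of $1-F$) has the colimit property. This cannot be proved term by term: it is a genuine theorem, and any proof must use how the two maps interact. Replace your sketch with the citation \cite[Theorem 2.7]{clausen2021}, applied to $\tc(-)/p$ and then compared with $\tc(-;p)$, exactly as the paper does. The remaining steps of your proposal are then fine: colimits in $\CycSp$ are computed underlying, the diagram is connective, and the target is identified with $\tc(R[G];p)$ when $T=G$.
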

\begin{proof}
By \cite[Theorem 2.7]{clausen2021}, the functor $\tc(-)/p$ commutes with colimits of connective cyclotomic spectra, so from Proposition \ref{induct-thh}, we see that
 \[ \underset{\mathcal{O}_{\mathcal{F}}(G)}{\colim} \ \tc \big(  \thh(R) \otimes \mathbb{S}[LBH]\big)/p \xrightarrow{\simeq}  \tc \big(  \thh(R) \otimes \mathbb{S}[L_TBG]\big)/p \]
 and hence the assembly map for $\tc(-)$ is an equivalence after $p$-completion. We conclude by the standard equivalence $\tc(X)^\wedge_p \simeq \tc(X;p)^\wedge_p$.
\end{proof}
\begin{remark}\label{p-comp-remark}
    Suppose $R$ is an $\mathbb{F}_p$-algebra. Then $\thh(R) \otimes \mathbb{S}[L_TBG]$ is an $\mathbb{F}_p$-algebra, so $p$-complete, which implies that $\tc \big(  \thh(R) \otimes \mathbb{S}[L_TBG];p\big)$ is already $p$-complete. Indeed, there is a cofibre sequence
    \[ \tc \big(  \thh(R) \otimes \mathbb{S}[L_TBG];p\big) \xrightarrow{} (\thh(R) \otimes \mathbb{S}[L_TBG])^{hS^1} \xrightarrow{} \big((\thh(R) \otimes \mathbb{S}[L_TBG])^{tC_p}\big)^{hS^1} \]
    in which the middle and right terms are $p$-complete (via \cite[Lemma I.2.9]{nikolaus2018topological} and closure of $p$-complete objects under limits). Moreover, in this case $\tc \big(  \thh(R) \otimes \mathbb{S}[L_TBG];p\big)$ is a module over $\tc(\mathbb{F}_p)_{\geq 0} \simeq \mathbb{Z}_p$.
\end{remark}

The most important application of Theorem \ref{induct-tc} is that $\tc$ has assembly for the family of cyclic subgroups. This mostly appears as \cite[Theorem 1.2]{luck2019assembly}. Note, however, that we do not require the family of cyclic subgroups to have a classifying space of finite type, at the expense of taking $p$-completion.

\begin{cor}[\cite{luck2019assembly}]\label{cyc-induct-tc}
    The cyclic assembly map for $\tc$ is an equivalence for $R$ connective after $p$-completion:
     \[ \underset{\mathcal{O}_{\mathcal{F}_\mathrm{cyc}}(G)}{\colim} \ \tc \big(  \thh(R) \otimes \mathbb{S}[LBH];p\big)\xrightarrow{\simeq}  \tc \big(  \thh(R) \otimes \mathbb{S}[LBG];p\big) \simeq \tc(R[G];p). \]
\end{cor}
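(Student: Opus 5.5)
This is a direct specialisation of Theorem \ref{induct-tc} to the family $\mathcal{F}=\mathcal{F}_\mathrm{cyc}$ of cyclic subgroups. The plan has three steps.

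\textbf{Step 1: the family covers $G$.} Theorem \ref{induct-tc} needs $\mathcal{F}$ to be a family, meaning closed under conjugation and passage to subgroups. Both hold for cyclic subgroups: a subgroup of a cyclic group is cyclic, and a conjugate of a cyclic group is cyclic. For the union, every $g\in G$ lies in the cyclic subgroup $\langle g\rangle$, so $T=\bigcup_{H\in\mathcal{F}_\mathrm{cyc}}H=G$. This is the only input specific to cyclic subgroups. Concretely, in the proof of Proposition \ref{induct-theta}, the slice poset $\{H\in\mathcal{F}_\mathrm{cyc}\mid g\in H\}$ is nonempty and has minimal element $\langle g\rangle$. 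No finiteness hypothesis on $G$ or on a model of $E_{\mathcal{F}_\mathrm{cyc}}G$ enters, since that argument is purely combinatorial.

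\textbf{Step 2: apply Theorem \ref{induct-tc}.} With $T=G$, the second part of Theorem \ref{induct-tc} says that the assembly map
\[ \underset{\mathcal{O}_{\mathcal{F}_\mathrm{cyc}}(G)}{\colim} \ \tc\big(\thh(R)\otimes\mathbb{S}[LBH];p\big)\xrightarrow{} \tc\big(\thh(R)\otimes\mathbb{S}[LBG];p\big) \]
is an equivalence after $p$-completion, for connective $R\in\mathrm{CAlg}(\Sp)$.

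\textbf{Step 3: identify the target.} Theorem \ref{thh-group-rings} gives $\thh(\mathbb{S}[G])\simeq\mathbb{S}[LBG]$ as cyclotomic spectra, since $\Omega BG\simeq G$. Symmetric monoidality of $\thh$ then gives $\thh(R[G])\simeq\thh(R)\otimes\mathbb{S}[LBG]$, and applying $\tc(-;p)$ yields $\tc(R[G];p)$.

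The underlying ingredients, namely Clausen's result that $\tc/p$ commutes with colimits of connective cyclotomic spectra, and the passage from $\tc$ to $\tc(-;p)$ after $p$-completion, are already absorbed into Theorem \ref{induct-tc}, so they are not repeated here. The one point to watch is connectivity. The terms $\thh(R)\otimes\mathbb{S}[LBH]$ must be connective cyclotomic spectra for Clausen's theorem to apply. This holds because $R$ is connective, so $\thh(R)$ is connective, and suspension spectra are connective.
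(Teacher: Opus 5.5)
Your proposal is correct and matches the paper, which obtains this corollary directly from Theorem \ref{induct-tc} by taking $\mathcal{F}=\mathcal{F}_\mathrm{cyc}$ and observing that $T=G$ because every $g$ lies in $\langle g\rangle$. Your remark that the slice poset $\{H\in\mathcal{F}_\mathrm{cyc} \mid g\in H\}$ has the explicit minimum $\langle g\rangle$ is a nice addition: it makes the contractibility step in Proposition \ref{induct-theta} transparent even for infinite $G$, which is the case the paper's ``no finiteness hypothesis'' remark is about.
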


Corollary \ref{cyc-induct-tc} suggests that we can calculate topological cyclic homology of finite group rings from the topological cyclic homology of finite cyclic group rings. One would ideally like to know the value of $\tc_*(R[C_{p^n}])$. For groups of order $p$ and arbitrary coefficients, this is calculated up to extension in \cite[Theorem 1.4.1]{hesselholt2020topological}. In characteristic $p$, the situation is more straightforward. In this case there is an isomorphism $R[C_{p^n}]=R[x]/x^{p^n}$, and the topological cyclic homology of truncated polynomial algebras has been evaluated by Hesselholt and Madsen (\cite{hesselholt1997cyclic}, \cite{speirs2020k}). In the next section, we will use this to calculate $\tc(k[C_p^r])$ for a perfect field $k$.

A further class of examples arises from taking $\mathcal{F}$ to be the family of subgroups of a normal subgroup $N$. For such a family, we can recover the topological cyclic homology associated to the subspace $L_NBG$.

\begin{prop}\label{tc-normal}
    Let $N \lhd G$ be a normal subgroup. Then for $R \in \mathrm{CAlg}(\mathrm{Sp})$ connective, after $p$-completion:
    \[ \tc(R[N];p)_{hG/N} \xrightarrow{\simeq} \tc \big(\thh(R) \otimes \mathbb{S}[L_NBG] ; p \big). \]
\end{prop}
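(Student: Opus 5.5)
The plan is to specialise Theorem \ref{induct-tc} to the family $\mathcal{F}=\{H \mid H \leq N\}$ and then identify the resulting colimit over $\mathcal{O}_{\mathcal{F}}(G)$ with a homotopy orbit construction, exactly as in the proof of Corollary \ref{cor-LBN}. For this family, $T=\bigcup_{H\le N} H = N$. Theorem \ref{induct-tc} therefore gives a $p$-adic equivalence
\[ \underset{\mathcal{O}_{\mathcal{F}}(G)}{\colim} \ \tc\big(\thh(R)\otimes\mathbb{S}[LBH];p\big) \xrightarrow{\simeq} \tc\big(\thh(R)\otimes \mathbb{S}[L_NBG];p\big). \]

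It remains to compute the left-hand colimit. The full subcategory of $\mathcal{O}_{\mathcal{F}}(G)$ on the single object $G/N$ is cofinal. Indeed, every $G/H$ with $H\le N$ maps to $G/N$. Moreover, for each object, the relevant comma category is the category of maps $G/H\to G/N$ together with morphisms induced by automorphisms of $G/N$. This category is a transitive action groupoid, hence connected and contractible, because $\mathrm{Map}(G/H,G/N)=(G/N)^H=G/N$ is a free transitive $\mathrm{Aut}(G/N)=G/N$-set.

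The one-object subcategory is $B(G/N)$ (up to op, which is harmless for a group). A colimit over it is the homotopy orbits of the value at $G/N$. That value is $\tc(\thh(R)\otimes\mathbb{S}[LBN];p)\simeq \tc(R[N];p)$ by Theorem \ref{thh-group-rings}, with the $G/N$-action induced by the functoriality of $X\mapsto X_{hG}$ on $\mathrm{Aut}(G/N)$, i.e.\ by conjugation. Hence the colimit is $\tc(R[N];p)_{hG/N}$, and the assembly map restricts to the map in the statement.

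The main subtlety is that Theorem \ref{induct-tc} already absorbs the only analytic input, namely that $\tc(-)/p$ commutes with colimits of connective cyclotomic spectra. One must therefore only check that the objects $\thh(R)\otimes\mathbb{S}[LBH]$ are connective (true since $R$ is connective) and that cofinality applies before applying $\tc$. The latter holds since cofinal restriction preserves colimits of any diagram.
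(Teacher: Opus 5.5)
Your proposal is correct and follows the paper's proof: apply Theorem \ref{induct-tc} to the family $\{H \mid H \le N\}$ (so that $T=N$), then identify the colimit with $\tc(R[N];p)_{hG/N}$ using cofinality of the object $G/N$, as in Corollary \ref{cor-LBN}. Your check that the comma categories are contractible, because $(G/N)^H = G/N$ is a free transitive $\mathrm{Aut}(G/N)$-set, supplies the cofinality detail that the paper leaves implicit.
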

\begin{proof}
    Consider the family $\mathcal{F} = \{ H \ | \ H \leq N \}$. By Theorem \ref{induct-tc}, we have that 
    \[ \underset{\mathcal{O}_{\mathcal{F}}(G)}{\colim} \ \tc \big(  \thh(R) \otimes \mathbb{S}[LBH];p\big) \xrightarrow{\simeq}  \tc \big(  \thh(R) \otimes \mathbb{S}[L_NBG];p\big). \]
    But, as in the proof of Corollary \ref{cor-LBN}, we have an equivalence
    \[ \underset{\mathcal{O}_{\mathcal{F}}(G)}{\colim} \ \tc \big(  \thh(R) \otimes \mathbb{S}[LBH];p\big) \simeq \tc(R[N];p)_{hG/N}.\]
\end{proof}
\begin{remark}
    Again, one must be careful with the functoriality in this statement. The $G/N$ action on $\tc(R[N];p)$ arises from the $G/N$ action on the space $BN$, not on the level of rings. In particular, the action may be non-trivial even for central extensions.
\end{remark}

\subsection{The family of $p'$-subgroups}
We now examine the assembly map when $\mathcal{F}$ is a family consisting of subgroups with order prime to $p$ (henceforth referred to as $p'$-subgroups). In the modular representation theory of finite groups, the character theory is controlled by $p'$-subgroups. Indeed, the Brauer character provides a rational equivalence between $K_0(k[G])$ and the group of $W(k)$-class functions on conjugacy classes with order prime to $p$. An analogous statement cannot hold for $\tc$: indeed, for $k$ a field of characteristic $p$ and $P$ a $p$-group, the spectrum $\tc(k[P])$ will in general differ from $\tc(k)$ in positive degrees. Nevertheless, we now show that an analogous statement does hold in non-positive degrees.

We begin by constructing a filtration on the space $LBG$, compatible with the $p$-Frobenius lifts.

\begin{definition}
    Let $\Lambda_i \subseteq G$ be the set of elements of $G$ of order $p^km$ for $k\leq i$, $p\nmid m$. We will write $L_iBG$ for $L_{\Lambda_i}BG$. 
\end{definition}

\begin{prop}\label{LBG-filtration}
    Suppose $|G| = p^nm$. Then
    \[ L_0BG \subseteq L_1BG \subseteq \ldots \subseteq L_nBG  \]
    is a $n$-step filtration of $LBG$ as a space with a $p$-Frobenius lift.

    Moreover, the action of $p \in \mathbb{N}^\times$ on $L_iBG$ for $i>0$ factors as
    % https://q.uiver.app/#q=WzAsMyxbMCwxLCJMX3tpLTF9QkciXSxbMSwxLCJMX2lCRyJdLFsxLDAsIkxfaUJHIl0sWzAsMSwiIiwwLHsic3R5bGUiOnsidGFpbCI6eyJuYW1lIjoiaG9vayIsInNpZGUiOiJ0b3AifX19XSxbMiwxLCJcXGNkb3QgcCJdLFsyLDAsIiIsMSx7InN0eWxlIjp7ImJvZHkiOnsibmFtZSI6ImRhc2hlZCJ9fX1dXQ==
\[\begin{tikzcd}
	& {L_iBG} \\
	{L_{i-1}BG} & {L_iBG}
	\arrow[dashed, from=1-2, to=2-1]
	\arrow["{\cdot p}", from=1-2, to=2-2]
	\arrow[hook, from=2-1, to=2-2]
\end{tikzcd}\]
\end{prop}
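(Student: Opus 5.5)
The plan is to reduce everything to statements about subsets of $G$, using Proposition \ref{sub-frob-lift} and the component description of Proposition \ref{cent-decomp}.

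First, each $\Lambda_i$ is closed under conjugation, since conjugate elements have the same order. It is also closed under $g \mapsto g^p$. If $g$ has order $p^k m$ with $p \nmid m$, then $g^p$ has order $p^{k-1}m$ when $k \geq 1$, and order $m$ when $k=0$ (because $p$ is coprime to $m$). In both cases the $p$-part of the order does not increase. By Proposition \ref{sub-frob-lift}, each $L_iBG$ is therefore a space with a $p$-Frobenius lift.

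Next, the inclusions $\Lambda_0 \subseteq \Lambda_1 \subseteq \dots \subseteq \Lambda_n$ are clear from the definition. Since $L_iBG \subseteq L_{i+1}BG$ are inclusions of unions of components, and both carry the restricted $\mathbb{W}_p^\mathrm{op}$-action, these inclusions are maps of spaces with $p$-Frobenius lift. Because $|G| = p^n m$, every element has order $p^k m'$ with $k \leq n$, so $\Lambda_n = G$ and $L_nBG = LBG$. This gives the $n$-step filtration.

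For the factorisation, we use that $L_{i-1}BG \subseteq L_iBG$ is an inclusion of path components. A map of spaces $L_iBG \to L_iBG$ factors through a union of components exactly when its image on $\pi_0$ lands in those components, and the factorisation is then unique up to contractible choice. By Proposition \ref{cent-decomp}, the action of $p$ sends the component $[g]$ to $[g^p]$. For $g \in \Lambda_i$ of order $p^k m$:
\begin{itemize}
\item if $k \geq 1$, then $g^p$ has order $p^{k-1}m$ with $k-1 \leq i-1$;
\item if $k = 0$, then $g^p$ has order $m$, so $g^p \in \Lambda_0 \subseteq \Lambda_{i-1}$, using $i > 0$.
\end{itemize}
Hence $\cdot p$ maps $\pi_0(L_iBG)$ into $\pi_0(L_{i-1}BG)$, and the dashed factorisation exists.

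The only point needing some care is this last step. We must make sure the factorisation is meant at the level of spaces, or $S^1$-equivariantly, rather than as a map of spaces with $p$-Frobenius lift. Working one component at a time handles this: a full sub-$\infty$-groupoid on a set of components is a $(-1)$-truncated inclusion, so lifts through it are unique. The $S^1$-action fixes components, so the factorisation is compatible with the $S^1$-structure as well.
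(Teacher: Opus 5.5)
Your proposal is correct and follows essentially the same route as the paper. The filtration comes from closure of $\Lambda_i$ under conjugation and $g\mapsto g^p$ via Proposition \ref{sub-frob-lift}, and the factorisation is checked on $\pi_0$, using that $L_{i-1}BG\subseteq L_iBG$ is an inclusion of components and that $[g]\mapsto[g^p]$ lowers the $p$-part of the order. Your extra checks make explicit what the paper leaves implicit: $\Lambda_n=G$, the $k=0$ case, and that the lift is unique and compatible with the $S^1$-structure.
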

\begin{proof}
    By definition, the subsets $\Lambda_i$ are closed under conjugation and $g \xrightarrow{} g^p$, so the first part follows from Proposition \ref{sub-frob-lift}.

    For the second, note again that $L_{i-1}BG \subseteq L_{i}BG$ is an inclusion of components, so it suffices to see that the factorisation exists on $\pi_0$. On $\pi_0(LBG)$, $p$ acts by sending a conjugacy class $[g]$ to $[g^p]$, so sends the conjugacy classes in $\Lambda_i$ to those in $\Lambda_{i-1}$ for $i>0$.
\end{proof}

The filtration of Proposition \ref{LBG-filtration} gives a filtration on $\thh(R[G])$ as a $p$-cyclotomic upon taking suspension spectra and tensoring with $\thh(R)$.

We can identify the associated graded explicitly. Recall that if $X \xrightarrow{} Y$ is a map of $p$-cyclotomic spectra, then the cofibre gains the structure of a $p$-cyclotomic spectrum via:

% https://q.uiver.app/#q=WzAsNixbMCwwLCJYIl0sWzAsMSwiWF57dENfcH0iXSxbMSwwLCJZIl0sWzEsMSwiWV57dENfcH0iXSxbMiwwLCJaIl0sWzIsMSwiWl57dENfcH0iXSxbMCwyXSxbMSwzXSxbMCwxXSxbMiwzXSxbNCw1LCIiLDEseyJzdHlsZSI6eyJib2R5Ijp7Im5hbWUiOiJkYXNoZWQifX19XSxbMiw0XSxbMyw1XV0=
\[\begin{tikzcd}
	X & Y & Z \\
	{X^{tC_p}} & {Y^{tC_p}} & {Z^{tC_p}}
	\arrow[from=1-1, to=1-2]
	\arrow[from=1-1, to=2-1]
	\arrow[from=1-2, to=1-3]
	\arrow[from=1-2, to=2-2]
	\arrow[dashed, from=1-3, to=2-3]
	\arrow[from=2-1, to=2-2]
	\arrow[from=2-2, to=2-3]
\end{tikzcd}\]

Moreover, $Z$ (along with the map $Z \xrightarrow{} Z^{tC_p}$) is the cofibre in $p$-cyclotomic spectra.

\begin{lemma}\label{filtration-thh-rg}
    Suppose that $|G|=p^nm$ ($p\nmid m$), and $R \in \mathrm{CAlg}(\mathrm{Sp})$. There is a length $n$ filtration of $\thh(R[G])$ as a $p$-cyclotomic spectrum
    \[ \thh(R) \otimes \mathbb{S}[L_0BG] \xrightarrow{} \thh(R) \otimes \mathbb{S}[L_1BG] \xrightarrow{} \ldots \xrightarrow{ } \thh(R) \otimes \mathbb{S}[L_nBG] \simeq \thh(R[G])\]
    and for $i>0$, the associated graded is
    \[ \thh(R) \otimes  \mathbb{S}[L_{\Lambda_i - \Lambda_{i-1}}BG] \simeq \bigoplus_{[g], \ \mathrm{ord}(g)=p^im} \thh(R) \otimes \mathbb{S}[BC_G(g)] \]
    with $p$-cyclotomic Frobenius $0$.
\end{lemma}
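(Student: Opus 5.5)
The plan is to apply the functor $\thh(R)\otimes\mathbb{S}[-]$, followed by the canonical functor $\Sp^{p\mathrm{Fr}}\to p\CycSp$, to the filtration of Proposition \ref{LBG-filtration}. Since each $L_{i-1}BG\subseteq L_iBG$ is a map of spaces with a $p$-Frobenius lift, and all three functors are functorial for such maps, we obtain a sequence of maps of $p$-cyclotomic spectra. The last term is $\thh(R)\otimes\mathbb{S}[L_nBG]$. Every element of $G$ has order $p^km$ with $k\le n$ (the order divides $|G|$), so $\Lambda_n=G$ and $L_nBG=LBG$. Theorem \ref{thh-group-rings}, combined with symmetric monoidality, then identifies this last term with $\thh(R[G])$ as a cyclotomic, hence $p$-cyclotomic, spectrum.

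For the associated graded, I use that $L_{i-1}BG\subseteq L_iBG$ is an inclusion of a union of components. Writing $L_iBG\simeq L_{i-1}BG\sqcup L_{\Lambda_i-\Lambda_{i-1}}BG$ as spaces with $S^1$-action, the suspension spectrum splits as a direct sum. The cofibre is therefore $\thh(R)\otimes\mathbb{S}[L_{\Lambda_i-\Lambda_{i-1}}BG]$ with its underlying $S^1$-action. The centraliser decomposition of Proposition \ref{cent-decomp} rewrites this as the sum over conjugacy classes $[g]$ of elements of order exactly $p^i m'$ (with $p\nmid m'$) of $\thh(R)\otimes\mathbb{S}[BC_G(g)]$. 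This matches the displayed formula, reading ``$\mathrm{ord}(g)=p^im$'' as $p$-part exactly $p^i$.

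It remains to identify the cyclotomic Frobenius on the cofibre, which I expect to be the main point. The Frobenius on $Z=\mathrm{cofib}(X\to Y)$ is induced from the diagram above. On $Y=\thh(R)\otimes\mathbb{S}[L_iBG]$ the Frobenius is the composite $Y\to Y^{hC_p}\to Y^{tC_p}$, where the first map comes from the space-level action of $p\in\mu_p$ tensored with the Frobenius of $\thh(R)$. By Proposition \ref{LBG-filtration}, the action of $p$ on $L_iBG$ factors through $L_{i-1}BG$, and this factorisation is $S^1$-equivariant because the inclusion is one of components. Hence the map $Y\to Y^{tC_p}$ factors as $Y\to X^{tC_p}\to Y^{tC_p}$, compatibly with the Frobenius of $X$. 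One can check this on summands: on the summand $\thh(R)\otimes\mathbb{S}[L_{\Lambda_i-\Lambda_{i-1}}BG]$ the Frobenius lands in the $X^{tC_p}$ summand of $Y^{tC_p}\simeq X^{tC_p}\oplus Z^{tC_p}$, since $(-)^{tC_p}$ is exact and preserves this finite direct sum. Projecting to $Z^{tC_p}$ then gives zero, so the induced map $Z\to Z^{tC_p}$ is null.

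The care needed is in making the nullhomotopy canonical as a $p$-cyclotomic structure rather than a mere homotopy class. I would handle this by working in $\Sp^{p\mathrm{Fr}}$: the cofibre of $\mathbb{S}[L_{i-1}BG]\to\mathbb{S}[L_iBG]$ in $\Sp^{p\mathrm{Fr}}$ is $\mathbb{S}[L_{\Lambda_i-\Lambda_{i-1}}BG]_+$ relative to the basepoint, using pointed suspension spectra of the quotient $L_iBG/L_{i-1}BG$. In that quotient the $p$-action sends everything to the basepoint, so the Frobenius lift is canonically zero. The canonical functor to $p\CycSp$ and the functor $\thh(R)\otimes-$ preserve cofibres, so the cofibre in $p\CycSp$ has Frobenius $0$.
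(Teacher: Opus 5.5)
Your proposal is correct and follows essentially the same route as the paper: apply $\thh(R)\otimes\mathbb{S}[-]$ to the filtration of Proposition~\ref{LBG-filtration}, identify the cofibre via the splitting into components and the centraliser decomposition, and show the Frobenius vanishes because the $p$-action on $L_iBG$ factors through $L_{i-1}BG$. One clause needs correcting: $\thh(R)$ has only a Frobenius $\thh(R)\to\thh(R)^{tC_p}$, not a lift to $\thh(R)^{hC_p}$, so the Frobenius on $Y$ does not factor through $Y^{hC_p}$. The paper avoids this, as your last paragraph implicitly does, by first showing the spherical cofibre has zero Frobenius lift. It then uses that the Frobenius $X\otimes Z\to X^{tC_p}\otimes Z^{tC_p}\to(X\otimes Z)^{tC_p}$ vanishes whenever that of $Z$ does.
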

\begin{proof}
    The existence of the filtration follows from Proposition \ref{LBG-filtration}, by applying the functor $\thh(R) \otimes \mathbb{S}[-]$. Let $C_i$ denote $\mathrm{cofib}(\mathbb{S}[L_{i-1}BG] \xrightarrow{} \mathbb{S}[L_iBG])$ (for $i>0$). As a spectrum with $S^1$-action, $C_i$ is $\mathbb{S}[L_{\Lambda_i-\Lambda_{i-1}}BG]$, so that by the centraliser decomposition
    \[ C_i \simeq \bigoplus_{[g], \ \mathrm{ord}(g)=p^im} \thh(R) \otimes \mathbb{S}[BC_G(g)]. \]

    The cyclotomic Frobenius on $C_i$ is induced by the Frobenius lift $C_i \xrightarrow{} C_i^{hC_p}$. To see that the latter is $0$, note that by Proposition \ref{LBG-filtration}, the Frobenius lift on $L_{i}BG$ factors through $L_{i-1}BG^{hC_p}$, so we obtain a factorisation of the $p$-Frobenius lift on $\mathbb{S}[L_{i}BG]$, and hence the induced map on the cofibre is $0$:

    % https://q.uiver.app/#q=WzAsOCxbMCwwLCJcXG1hdGhiYntTfVtMX3tpLTF9QkddIl0sWzEsMCwiXFxtYXRoYmJ7U31bTF9pQkddIl0sWzIsMCwiQ19pIl0sWzAsMSwiXFxtYXRoYmJ7U31bTF97aS0xfUJHXntoQ19wfV0iXSxbMSwxLCJcXG1hdGhiYntTfVtMX2lCR157aENfcH1dIl0sWzAsMiwiXFxtYXRoYmJ7U31bTF97aS0xfUJHXV57aENfcH0iXSxbMSwyLCJcXG1hdGhiYntTfVtMX3tpLTF9QkddXntoQ19wfSJdLFsyLDIsIkNfaV57aENfcH0iXSxbMCwzXSxbMyw1XSxbMCwxXSxbMSw0XSxbNCw2XSxbMyw0XSxbNSw2XSxbMSwyXSxbNiw3XSxbMiw3LCIwIl0sWzEsMywiIiwxLHsic3R5bGUiOnsiYm9keSI6eyJuYW1lIjoiZGFzaGVkIn19fV1d
\[\begin{tikzcd}
	{\mathbb{S}[L_{i-1}BG]} & {\mathbb{S}[L_iBG]} & {C_i} \\
	{\mathbb{S}[L_{i-1}BG^{hC_p}]} & {\mathbb{S}[L_iBG^{hC_p}]} \\
	{\mathbb{S}[L_{i-1}BG]^{hC_p}} & {\mathbb{S}[L_{i-1}BG]^{hC_p}} & {C_i^{hC_p}}
	\arrow[from=1-1, to=1-2]
	\arrow[from=1-1, to=2-1]
	\arrow[from=1-2, to=1-3]
	\arrow[dashed, from=1-2, to=2-1]
	\arrow[from=1-2, to=2-2]
	\arrow["0", from=1-3, to=3-3]
	\arrow[from=2-1, to=2-2]
	\arrow[from=2-1, to=3-1]
	\arrow[from=2-2, to=3-2]
	\arrow[from=3-1, to=3-2]
	\arrow[from=3-2, to=3-3]
\end{tikzcd}\]

    Now the results for $\thh(R[G])\simeq \thh(R) \otimes \mathbb{S}[LBG]$ follow from tensoring with $\thh(R)$. Note that for $X,Y \in p\CycSp$, the tensor product is the $S^1$-spectrum $X \otimes Y$ with cyclotomic Frobenius 
    \[ X \otimes Y \xrightarrow{} X^{tC_p} \otimes Y^{tC_p} \xrightarrow{}  (X\otimes Y)^{tC_p}\]
    so that if the cyclotomic Frobenius on $Y$ is $0$, then the cyclotomic Frobenius on $X\otimes Y$ is $0$.
\end{proof}

The only associated graded we have not described is that in degree $0$. As a spectrum with $S^1$-action, this is $ \thh(R) \otimes \mathbb{S}[L_0BG]$, so it is the summand of $\thh(R[G])$ indexed by the $p'$-conjugacy classes. If $k$ is a large enough field of characteristic $p$, then $K_0(k[G])$ is equivalent to $\tc_0(k[G])$ (up to $p$-completion), and is free of rank the number of $p'$-conjugacy classes. Hence we can view the higher terms in this filtration as the contributions to the higher $K$-theory from the $p$-singular conjugacy classes, which do not contribute to $K_0$. We can make this precise via the spectral sequence for $\tc(R[G])$ arising from the filtration on $\thh(R[G])$.

\begin{prop}[Spectral sequence for TC]\label{tc-ss}
    Suppose that $|G|=p^nm$ ($p \nmid m$), and $R \in \mathrm{CAlg}(\Sp)$ is bounded below. Let $C$ be the cofibre of the map induced by the inclusion of the $p'$-elements in $G$,
    \[ \thh(R) \otimes \mathbb{S}[L_0BG] \xrightarrow{} \thh(R) \otimes \mathbb{S}[LBG]\simeq \thh(R[G]) \xrightarrow{} C. \]

    There is a spectral sequence converging to $\tc(C;p)^\wedge_p$:
    \begin{align*}
        E^1_{kl} &\simeq \pi_{k+l}\Big( \ \Sigma\big(\mathbb{S}[L_kBG - L_{k-1}BG] \otimes \thh(R)\big)_{hS^1} \Big)^\wedge_p \\
        &\simeq \pi_{k+l-1}\Big( \ \big(\bigoplus_{[g], \  \mathrm{ord}(g)=p^km} R[BC_G(g)] \otimes_R \thh(R)\big)_{hS^1} \Big)^\wedge_p\\
        &\Rightarrow \tc(C;p)^\wedge_p
    \end{align*}
\end{prop}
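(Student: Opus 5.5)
The plan is to filter $C$ using the filtration of Lemma \ref{filtration-thh-rg}, apply $\tc(-;p)$, and identify each associated graded piece with the shifted homotopy orbits appearing in the $E^1$-term.

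\textbf{The filtration.} The filtration $\thh(R)\otimes\mathbb{S}[L_iBG]$ of Lemma \ref{filtration-thh-rg} induces, after quotienting by the $i=0$ term, a finite filtration of $C$ in $p$-cyclotomic spectra:
\[ 0=C_{(0)} \to C_{(1)} \to \dots \to C_{(n)} = C, \qquad C_{(i)}=\mathrm{cofib}\big(\thh(R)\otimes\mathbb{S}[L_0BG]\to \thh(R)\otimes \mathbb{S}[L_iBG]\big). \]
For $i>0$ its associated graded pieces are the cofibres
\[ D_i=\thh(R)\otimes\mathbb{S}[L_{\Lambda_i-\Lambda_{i-1}}BG], \]
each with cyclotomic Frobenius $0$.

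\textbf{Exactness of $\tc(-;p)$.} On bounded below $p$-cyclotomic spectra, $\tc(-;p)$ is exact, since it is a mapping spectrum, or equivalently the fibre of $\varphi-\mathrm{can}$ from $X^{hS^1}$ to $(X^{tC_p})^{hS^1}$. Hence $\tc(C_{(i)};p)$ gives a finite filtration of $\tc(C;p)$ with graded pieces $\tc(D_i;p)$. A finite filtration yields a convergent spectral sequence, and $p$-completion preserves this. I would index it so that $E^1_{kl}=\pi_{k+l}\tc(D_k;p)^\wedge_p$.

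\textbf{The main step: $\tc$ of a graded piece.} For a bounded below $p$-cyclotomic $X$ with Frobenius $0$,
\[ \tc(X;p)=\mathrm{fib}\big(X^{hS^1}\xrightarrow{-\mathrm{can}}(X^{tC_p})^{hS^1}\big). \]
After $p$-completion, for $X$ bounded below, $(X^{tC_p})^{hS^1}\simeq X^{tS^1}$; this is Nikolaus--Scholze, Lemma II.4.2 and the surrounding discussion, and it requires $R$ bounded below. The canonical map then becomes $X^{hS^1}\to X^{tS^1}$, whose fibre is $\Sigma X_{hS^1}$ by the norm cofibre sequence $\Sigma X_{hS^1}\to X^{hS^1}\to X^{tS^1}$. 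So
\[ \tc(D_k;p)^\wedge_p\simeq \big(\Sigma (D_k)_{hS^1}\big)^\wedge_p, \]
which gives the first line of the $E^1$-term. The thing to check here is that the sign of $\mathrm{can}$ does not affect the fibre.

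\textbf{Identifying the $E^1$-term.} The second line follows from the centraliser decomposition of Proposition \ref{cent-decomp}, giving $L_{\Lambda_k-\Lambda_{k-1}}BG\simeq\bigsqcup_{[g],\ \mathrm{ord}(g)=p^km} BC_G(g)$, together with
\[ \thh(R)\otimes\mathbb{S}[Y]\simeq R[Y]\otimes_R\thh(R) \]
and the suspension shift by one.

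The main obstacle is the identification in the third paragraph. It needs the Tate orbit lemma, or the $p$-completion of $(X^{tC_p})^{hS^1}$ with $X^{tS^1}$, for bounded below $X$, which is the only place the boundedness hypothesis on $R$ enters.
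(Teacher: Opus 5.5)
Your proposal is correct and follows essentially the same route as the paper's proof: filter $C$ via Lemma \ref{filtration-thh-rg} and apply $\tc(-;p)$ to get a finite filtration with graded pieces $\tc(D_k;p)$. As in the paper, you then use Nikolaus--Scholze Lemma II.4.2 together with the vanishing Frobenius to identify each graded piece with $(\Sigma (D_k)_{hS^1})^\wedge_p$, and read off the $E^1$-term from the centraliser decomposition.
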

\begin{proof}
    By Lemma \ref{filtration-thh-rg}, there is an $n-1$ step filtration on $C$ with graded parts \[ \thh(R) \otimes  \mathbb{S}[L_{\Lambda_i - \Lambda_{i-1}}BG] \simeq \bigoplus_{[g], \ \mathrm{ord}(g)=p^im} \thh(R) \otimes \mathbb{S}[BC_G(g)] \]
    for $1 \leq i \leq n$. Applying $\tc(-;p)$, we obtain a filtration on $\tc(C;p)^\wedge_p$ with associated graded
    \[\tc\big(\thh(R) \otimes  \mathbb{S}[L_{\Lambda_i - \Lambda_{i-1}}BG];p\big).\]

    For a bounded below $p$-cyclotomic spectrum $X$, there is a fibre sequence (\cite[Lemma II.4.2.]{nikolaus2018topological})
    \[ \tc(X;p) \xrightarrow{} (\thh(X)^{hS^1};p)\xrightarrow{\phi_X^{hS^1}-\mathrm{can}} (\thh(X)^{tS^1};p) \]
    so that if the cyclotomic structure map $\phi_X$ is $0$, there is an equivalence $\tc(X;p) \simeq (\Sigma 
     \ \thh(X)_{hS^1})^\wedge_p$ (\cite[Corollary I.4.3]{nikolaus2018topological}). Now the spectrum 
     \[ \Sigma\big(\mathbb{S}[L_kBG - L_{k-1}BG] \otimes \thh(R)\big)_{hS^1} \]
    is bounded below, so the homotopy groups of the $p$-completion are the $p$-completion of the homotopy groups. This completes the identification of the $E^1$ page.
\end{proof}

In particular, we see that for $R$ connective, the cofibre in Proposition \ref{tc-ss} is $1$-connective.

\begin{cor}\label{tc-leq0-cor}
    Suppose that $R$ is connective. Then the map $L_0BG \xrightarrow{} LBG$ induces an equivalence
    \[ \Big(\tc\big(\thh(R) \otimes \mathbb{S}[L_0BG];p)^\wedge_p\Big)_{\leq 0} \xrightarrow{\simeq} \Big(\tc\big(\thh(R) \otimes \mathbb{S}[LBG];p\big)^\wedge_p\Big)_{\leq 0} \simeq \big(\tc(R[G];p)^\wedge_p\big)_{\leq 0}. \]
\end{cor}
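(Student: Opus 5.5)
We work with the cofibre sequence of $p$-cyclotomic spectra
\[ \thh(R) \otimes \mathbb{S}[L_0BG] \xrightarrow{} \thh(R[G]) \xrightarrow{} C \]
from Proposition \ref{tc-ss}. Since $\tc(-;p)$ is exact on bounded below $p$-cyclotomic spectra and $p$-completion is exact, this gives a cofibre sequence
\[ \tc\big(\thh(R) \otimes \mathbb{S}[L_0BG];p\big)^\wedge_p \xrightarrow{} \tc(R[G];p)^\wedge_p \xrightarrow{} \tc(C;p)^\wedge_p. \]
It therefore suffices to show that $\tc(C;p)^\wedge_p$ is $1$-connective, i.e.\ $\pi_j \tc(C;p)^\wedge_p = 0$ for $j \leq 0$. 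Granting this, the long exact sequence shows that the first map is an isomorphism on $\pi_j$ for $j \leq 0$. Indeed, on $\pi_0$ it is surjective because $\pi_0$ of the cofibre vanishes, and injective because $\pi_1$ of the cofibre maps to $\pi_0$ of the fibre term. A careful check is needed here: we need $\pi_j(\text{cofibre})=0$ for $j\le 0$ for iso in degrees $<0$ and surjectivity in degree $0$. For injectivity in degree $0$, exactness at $\pi_0$ of the first term says the kernel is the image of $\pi_1 \tc(C;p)^\wedge_p$, which need not vanish. So the statement really only gives an equivalence on $(-)_{\leq 0}$ after accounting for this, and we must examine degree $0$ more carefully. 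The cleanest route is to show that the connecting map $\pi_1\tc(C;p)^\wedge_p \to \pi_0$ is zero, or that the map $\pi_1 \tc(R[G];p)^\wedge_p\to\pi_1\tc(C;p)^\wedge_p$ is surjective. We plan to obtain this using the filtration splitting. Alternatively, if the intended meaning of ``$1$-connective cofibre'' already suffices, then truncation $\tau_{\le 0}$ of a map whose cofibre is $1$-connective is an equivalence. That is the standard fact: if $\mathrm{cofib}(f)$ is $1$-connective then $\mathrm{fib}(f)$ is $0$-connective. This gives $\pi_0$ surjective only, so in fact we need the cofibre to be $2$-connective, or the fibre to be $1$-connective.

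To get connectivity, we use the finite filtration of $C$ from Lemma \ref{filtration-thh-rg}. Its graded pieces have Frobenius $0$, so by the spectral sequence of Proposition \ref{tc-ss} each graded piece of $\tc(C;p)^\wedge_p$ is $\big(\Sigma (\thh(R)\otimes\mathbb{S}[BC_G(g)])_{hS^1}\big)^\wedge_p$. For $R$ connective, $\thh(R)$ is connective, hence so are the tensor product and its homotopy orbits. After suspension the graded pieces are therefore $1$-connective, and $p$-completion of a bounded below spectrum preserves this. A finite filtration with $1$-connective graded pieces has $1$-connective total object, so $\tc(C;p)^\wedge_p$ is $1$-connective.

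The main obstacle is the degree-$0$ injectivity noted above. To address it, we would show that $\pi_1$ of each graded piece, namely $\pi_0\big((\thh(R)\otimes\mathbb{S}[BC_G(g)])_{hS^1}\big)^\wedge_p = \pi_0(R)^\wedge_p$-type terms, maps to zero under the boundary into $\pi_0\tc(\thh(R)\otimes\mathbb{S}[L_0BG];p)$. We would try to do this by constructing classes in $\pi_1\tc(R[G];p)$ lifting them, namely the images of the group elements $g$ under $G\to K_1(R[G])\to\tc_1$, i.e.\ the trace of the unit $g$ viewed as a loop in $BC_G(g)$. If that fails, the fallback is to read $(-)_{\le 0}$ as the statement on negative degrees together with surjectivity in degree $0$.
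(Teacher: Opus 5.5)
Your connectivity argument is the paper's own proof. The finite filtration of $C$ from Lemma~\ref{filtration-thh-rg} has graded pieces with zero Frobenius, each contributing $\bigl(\Sigma(\thh(R)\otimes\mathbb{S}[BC_G(g)])_{hS^1}\bigr)^\wedge_p$. These are $1$-connective for $R$ connective, so $\tc(C;p)^\wedge_p$ is $1$-connective. The paper stops there ("it suffices to see that $C$ is $1$-connective"). Your objection to that step is correct: a $1$-connective cofibre only gives isomorphisms on $\pi_j$ for $j\le -1$ and a surjection on $\pi_0$.

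The genuine gap is in your planned repair, and it cannot be closed. The connecting map $\pi_1\tc(C;p)^\wedge_p\to\pi_0\tc(\thh(R)\otimes\mathbb{S}[L_0BG];p)^\wedge_p$ is not zero in general. So injectivity on $\pi_0$, and with it the corollary as literally stated, fails.

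Take $G=C_p$ and $R=k$ a finite field of characteristic $p$. Here $L_0BC_p=L_{\{e\}}BC_p\simeq BC_p$ is the constant-loop summand with trivial structure. As in the proof of Proposition~\ref{free-action}, the source is therefore the assembly source $C_*(BC_p;\tc(k))$. Its $\pi_0$ is $\mathbb{Z}_p\oplus H_1(BC_p;\tc_{-1}(k))\cong\mathbb{Z}_p\oplus\mathbb{Z}/p$. On the other side, $\tc_0(k[C_p];p)\cong\tc_0(k;p)\cong\mathbb{Z}_p$: the relative $K$-theory of the nilpotent extension $k[C_p]\to k$ is $1$-connective, so Dundas--Goodwillie--McCarthy applies. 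Equivalently, the paper's own Lemma~\ref{BC_p-split-surj} says the connecting map $\tc^W_1(BC_p;k)\to H_1(BC_p;\tc_{-1}(k))$ is surjective onto a nonzero group.

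Your specific lifting idea also fails for a structural reason. The Dennis trace of a group element is $g^{-1}\otimes g$, which lies in the $[e]$-summand, not the $[g]$-summand; it is the assembly image of $g\in H_1(BG)$. Its image in $\tc_1(C;p)$ is therefore zero, and it lifts nothing.

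What your argument and the paper's actually prove is your fallback: an equivalence on $(-)_{\le -1}$ together with a surjection on $\pi_0$. The same correction applies to Corollary~\ref{tc-cyc-prime}.
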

\begin{proof}
    Let $C$ be as in the statement of Proposition \ref{tc-ss}. It suffices to see that $C$ is $1$-connective. But for $R$ connective, the spectrum
    \[ \big(\bigoplus_{[g], \  \mathrm{ord}(g)=p^km} \mathbb{} R[BC_G(g)] \otimes_R \thh(R))\big)_{hS^1} \]
    is again connective, so the spectral sequence is concentrated in degrees $k+l>0$. Hence $C_{\leq0}=0$ after $p$-completion.
\end{proof}

We now use Theorem \ref{induct-tc} to describe the spectra $\tc\big(\thh(R) \otimes \mathbb{S}[L_rBG];p)$ via assembly.

\begin{lemma}\label{induct-cyc-pr}
    For $r \geq 0$, let $\mathcal{F}_\mathrm{cyc}^r$ be the family of cyclic subgroups of order $p^sm$ for $s \leq r$, $p \nmid m$. Then the assembly map gives an equivalence after $p$-completion (for $R$ connective)
    \[ \underset{\mathcal{O}_{\mathcal{F}_\mathrm{cyc}^r}(G)}{\colim} \ \tc \big(  \thh(R) \otimes \mathbb{S}[LBH] ;p \big) \xrightarrow{\simeq}  \tc \big(  \thh(R) \otimes \mathbb{S}[L_rBG] ;p\big). \]
\end{lemma}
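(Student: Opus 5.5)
This is a direct application of Theorem \ref{induct-tc} with $\mathcal{F} = \mathcal{F}_\mathrm{cyc}^r$. The only real content is checking the hypotheses and identifying the union $T = \bigcup_{H \in \mathcal{F}_\mathrm{cyc}^r} H$ with $\Lambda_r$, so that $L_TBG = L_rBG$.

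First, $\mathcal{F}_\mathrm{cyc}^r$ is a family, i.e.\ closed under conjugation and subgroups. Conjugation preserves both cyclicity and order. A subgroup of a cyclic group of order $p^sm$ is cyclic of order $p^{s'}m'$ with $s' \leq s$ and $m' \mid m$, so it again lies in $\mathcal{F}_\mathrm{cyc}^r$.

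Second, we identify $T$ with $\Lambda_r$. If $g \in H$ with $H \in \mathcal{F}_\mathrm{cyc}^r$, then the order of $g$ divides $|H| = p^sm$, so it is $p^{s'}m'$ with $s' \leq s \leq r$; hence $g \in \Lambda_r$. Conversely, if $g \in \Lambda_r$, then $\langle g \rangle$ is cyclic of order $p^km$ with $k \leq r$, so $\langle g\rangle \in \mathcal{F}_\mathrm{cyc}^r$ and $g \in T$. Thus $T = \Lambda_r$, and by Definition \ref{LTBG-def} (together with the definition of $L_rBG$) we get $L_TBG = L_{\Lambda_r}BG = L_rBG$.

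With these two facts, Theorem \ref{induct-tc} applied to $\mathcal{F} = \mathcal{F}_\mathrm{cyc}^r$ gives, after $p$-completion, the equivalence
\[ \underset{\mathcal{O}_{\mathcal{F}_\mathrm{cyc}^r}(G)}{\colim} \ \tc \big(  \thh(R) \otimes \mathbb{S}[LBH];p\big) \xrightarrow{\simeq}  \tc \big(  \thh(R) \otimes \mathbb{S}[L_rBG];p\big), \]
which is the claim.

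Two remarks on compatibility. The target of the assembly map in Theorem \ref{induct-tc} is $L_TBG$ with the Frobenius-lift structure restricted from $LBG$. Since $\Lambda_r$ is closed under conjugation and under all power maps $g \mapsto g^n$ (taking powers does not increase the $p$-part of the order), Proposition \ref{sub-frob-lift} shows this agrees with the structure on $L_rBG$ used in Proposition \ref{LBG-filtration}. The identification of the $p$-typical versus integral structures is likewise unchanged. There is no substantive obstacle; the only point needing care is the elementwise identification $T = \Lambda_r$.
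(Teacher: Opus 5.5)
Your proof is correct and is the same as the paper's, which simply says the lemma is immediate from Theorem \ref{induct-tc}. Your checks are exactly the details that citation leaves unstated: that $\mathcal{F}_\mathrm{cyc}^r$ is a family, and that $\bigcup_{H \in \mathcal{F}_\mathrm{cyc}^r} H = \Lambda_r$, so that $L_TBG = L_rBG$.
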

\begin{proof}
    This is immediate from Theorem \ref{induct-tc}. 
\end{proof}

An immediate corollary of Proposition \ref{induct-cyc-pr} is that assembly from the family of cyclic subgroups of order prime to $p$ controls $\tc(R[G])$ in non-positive degrees.

\begin{cor}\label{tc-cyc-prime}
    Let $\mathcal{F}_\mathrm{cyc}^0$ be the family of cyclic subgroups of order prime to $p$, and $R$ connective. The assembly map induces an equivalence after $p$-completion:
    \[ \bigg( \underset{\mathcal{O}_{\mathcal{F}_\mathrm{cyc}^0}(G)}{\colim} \ \tc \big(  \thh(R) \otimes \mathbb{S}[LBH] ;p \big) \bigg)_{\leq 0} \simeq \bigg( \underset{\mathcal{O}_{\mathcal{F}_\mathrm{cyc}^0}(G)}{\colim} \ \tc \big( R[H] ;p \big) \bigg)_{\leq 0} \xrightarrow{\simeq} \bigg( \tc(R[G];p) \bigg)_{\leq 0}. \]
\end{cor}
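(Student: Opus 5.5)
The plan is to combine Lemma \ref{induct-cyc-pr} in the case $r=0$ with Corollary \ref{tc-leq0-cor}. With $r=0$, the family $\mathcal{F}_\mathrm{cyc}^0$ consists of the cyclic subgroups of order prime to $p$. Lemma \ref{induct-cyc-pr} then says the assembly map
\[ \underset{\mathcal{O}_{\mathcal{F}_\mathrm{cyc}^0}(G)}{\colim} \ \tc \big(  \thh(R) \otimes \mathbb{S}[LBH] ;p \big) \xrightarrow{} \tc \big(  \thh(R) \otimes \mathbb{S}[L_0BG] ;p\big) \]
is an equivalence after $p$-completion. The union of the subgroups in $\mathcal{F}_\mathrm{cyc}^0$ is exactly $\Lambda_0$, the set of $p'$-elements, since every $p'$-element generates a cyclic $p'$-subgroup. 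Hence the target of the assembly map is the $L_0BG$ piece.

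Next I compose with the map induced by $L_0BG \to LBG$. The full assembly map to $\tc(\thh(R)\otimes\mathbb{S}[LBG];p)\simeq\tc(R[G];p)$ factors through $\tc(\thh(R)\otimes\mathbb{S}[L_0BG];p)$, because for $H\in\mathcal{F}^0_\mathrm{cyc}$ the map $LBH\to LBG$ lands in $L_0BG$ as spaces with Frobenius lifts. By Corollary \ref{tc-leq0-cor}, after $p$-completion the second map becomes an equivalence on $(-)_{\leq 0}$. Applying $(-)_{\leq 0}$ to the $p$-completed composite therefore gives the claimed equivalence.

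It remains to identify the source term with $\colim\, \tc(R[H];p)$. This uses the objectwise equivalence $\thh(R)\otimes\mathbb{S}[LBH]\simeq \thh(R[H])$ from Theorem \ref{thh-group-rings}, with the caveat, as in the paper's earlier remarks, that the functoriality over the orbit category is the one coming from spaces.

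The only subtle point is bookkeeping with $p$-completion and truncation. I would $p$-complete the colimit first, since it is the $p$-completed colimit that maps equivalently, and only then truncate. The truncation step is harmless because it is applied to a map that is already an equivalence after $p$-completion. Also, $R$ is assumed connective in the statement, so the connectivity hypothesis of Corollary \ref{tc-leq0-cor} holds. With these conventions the argument is a formal composition and needs no new input.
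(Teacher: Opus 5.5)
Your argument is the paper's own proof: Lemma \ref{induct-cyc-pr} at $r=0$, followed by Corollary \ref{tc-leq0-cor}. The second step is where it fails, and your remark that the truncation is ``harmless'' hides the problem. Only the first map is an equivalence after $p$-completion; the composite is not. What Proposition \ref{tc-ss} gives is that the cofibre $\tc(C;p)$ of $\tc(\thh(R)\otimes\mathbb{S}[L_0BG];p)\to\tc(R[G];p)$ is $1$-connective. Its bottom group $\pi_1$ is built from $\pi_0$ of the summands $\thh(R)\otimes\mathbb{S}[BC_G(g)]$ for $[g]$ $p$-singular, and it need not vanish. Since $\tau_{\leq 0}$ is not exact, the long exact sequence gives isomorphisms on $\pi_i$ for $i\leq -1$ but only a surjection on $\pi_0$. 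The reason is that the boundary $\pi_1\tc(C;p)\to\pi_0\tc(\thh(R)\otimes\mathbb{S}[L_0BG];p)$ can be nonzero. The proof of Corollary \ref{tc-leq0-cor} makes exactly this off-by-one error when it passes from $\tau_{\leq 0}\tc(C;p)=0$ to an equivalence on $\tau_{\leq 0}$.

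The boundary is already nonzero for $G=C_p$ and $R=\mathbb{F}_p$. Here $\mathcal{F}^0_\mathrm{cyc}=\{e\}$, so the colimit is $\tc(\mathbb{F}_p;p)_{hC_p}\simeq C_*(BC_p;\tc(\mathbb{F}_p;p))$ with $\tc(\mathbb{F}_p;p)\simeq\mathbb{Z}_p\oplus\Sigma^{-1}\mathbb{Z}_p$. Its $\pi_0$ is therefore $H_0(BC_p;\mathbb{Z}_p)\oplus H_1(BC_p;\mathbb{Z}_p)\cong\mathbb{Z}_p\oplus\mathbb{Z}/p$. On the other hand, $\tc_0(\mathbb{F}_p[C_p];p)\cong\tc_0(\mathbb{F}_p;p)\cong\mathbb{Z}_p$, because the augmentation ideal is nilpotent and the relative term vanishes in degrees $\leq 0$.

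The paper's Section 3 shows the same thing. By Lemma \ref{BC_p-split-surj} and Proposition \ref{bound-surj-ass}, the boundary $\tc^W_1(BC_p^r;k)\to H_1(BC_p^r;\tc_{-1}(k))$ is onto. So the summand $H_1(BC_p^r;\tc_{-1}(k))$ of $\pi_0C_*(BC_p^r;\tc(k))$ dies under assembly, and this summand is nonzero for $k$ finite. Note that $C_*(BC_p^r;\tc(k))$ is exactly the $\mathcal{F}^0_\mathrm{cyc}$-colimit for $G=C_p^r$.

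So the statement with $(-)_{\leq 0}$ is false as written, and no bookkeeping with completion or truncation will rescue this route. What your argument, like the paper's, actually proves is an equivalence on $(-)_{\leq -1}$ together with surjectivity on $\pi_0$.
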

\begin{proof}
    This is a consequence of Corollary \ref{tc-leq0-cor} and Lemma \ref{induct-cyc-pr}.
\end{proof}

\subsection{Frobenius groups}
Given a semi-direct product $G \cong K \rtimes H$, one would like to express $\tc(R[G])$ in terms of $\tc(R[K])$ and $\tc(R[H])$. This is not possible in general, but if the semi-direct product happens to be a Frobenius group then Theorem \ref{induct-theta} can be used to produce such a description, as we now show. Recall that $K \rtimes H$ is a Frobenius group if $H \cap H^g$ is trivial for all $g \in G \ \backslash  \ H$. Normalisers of Sylow subgroups are often Frobenius groups, so such groups play an important role in $p$-local representation theory.

\begin{prop}\label{tc-frob-groups}
    Suppose $G\cong K \rtimes H$ is a Frobenius group. For $R \in \mathrm{CAlg}(\Sp)$ connective, the diagram

    % https://q.uiver.app/#q=WzAsNCxbMCwwLCJcXHRjKFIpX3toSH0iXSxbMiwwLCJcXHRjKFJbSF0pIl0sWzAsMiwiXFx0YyhSW0tdKV97aEh9Il0sWzIsMiwiXFx0YyhSW0ddKSJdLFswLDJdLFsyLDNdLFswLDFdLFsxLDNdXQ==
\[\begin{tikzcd}
	{\tc(R;p)_{hH}} && {\tc(R[H];p)} \\
	\\
	{\tc(R[K];p)_{hH}} && {\tc(R[G];p)}
	\arrow[from=1-1, to=1-3]
	\arrow[from=1-1, to=3-1]
	\arrow[from=1-3, to=3-3]
	\arrow[from=3-1, to=3-3]
\end{tikzcd}\]
    becomes a pushout after $p$-completion.
    If moreover the order of $H$ is prime to $p$, then the map $\tc(R[K];p)_{hH}\xrightarrow{} \tc(R[G];p)$ is split after $p$-completion.
\end{prop}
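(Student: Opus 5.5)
Our plan is to prove the statement first at the level of free loop spaces, as a pushout of spaces with Frobenius lifts, and then to push it through $\thh(R)\otimes\mathbb{S}[-]$ and $\tc(-;p)$, as in the proof of Theorem \ref{induct-tc}. For a Frobenius group $G=K\rtimes H$, the standard structure theory says that $G$ is the union of $K$ and the conjugates of $H$, and that $H^g\cap H^{g'}$ is trivial unless $H^g=H^{g'}$. Consider the family $\mathcal{F}$ generated by $K$ and $H$, that is, all subgroups of $K$ and all subgroups of conjugates of $H$. Then $T=\bigcup_{L\in\mathcal{F}}L=G$. Proposition \ref{induct-theta} therefore gives
\[ \underset{\mathcal{O}_{\mathcal{F}}(G)}{\colim}\ LBL \xrightarrow{\simeq} LBG \]
as spaces with Frobenius lifts. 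The key step is to identify this colimit as a pushout. Alternatively, we can work directly with components: $\pi_0(LBG)$ splits as the classes meeting $K\setminus\{e\}$, the classes meeting $H^g\setminus\{e\}$, and the class of $e$. By Proposition \ref{cent-decomp}, we get $LBG\simeq L_KBG\sqcup_{BG} L_{H\text{-conj}}BG$, glued along the identity component $L_{\{e\}}BG\simeq BG$. All three pieces are closed under conjugation and powers, so Proposition \ref{sub-frob-lift} applies to each.

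Next we identify each piece. Corollary \ref{cor-LBN} gives $L_KBG\simeq (LBK)_{hH}$, and similarly $L_{\{e\}}BG\simeq (L_{\{e\}}BK)_{hH}=(BK)_{hH}$. For the $H$-part, we claim that the inclusion $H\to G$ induces an equivalence $L_{(H\setminus e)}BH\to L_{\bigcup_g H^g\setminus e}BG$. Two facts give this. First, for $1\neq h\in H$ we have $C_G(h)\le H$, because if $g$ commutes with $h$ then $h\in H\cap H^g$, which forces $g\in H$. Second, elements of $H$ that are conjugate in $G$ are already conjugate in $H$, by the same intersection argument. The same argument appears in Example \ref{ex-lbg-decomp}. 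Consequently $LBG$ is the pushout of $(LBK)_{hH}\leftarrow BG \to LBH\setminus\text{(nontrivial part)}$. Rewriting in terms of whole loop spaces, we obtain a pushout of spaces with Frobenius lifts
\[ LBG\simeq (LBK)_{hH}\sqcup_{(L_{\{e\}}BK)_{hH}} LBH, \]
where the corner maps to $LBH$ through $BH$. The point to check here is that $L_{\{e\}}BH=BH\to BG$ and the corner object agree. At the level of $\thh(R)\otimes\mathbb{S}[-]$, however, the trivial-element summand on the $H$ side is $\thh(R)\otimes\mathbb{S}[BH]$, which is exactly $\thh(R)\otimes \mathbb{S}[L_{\{e\}}BK]_{hH}$ restricted along $BH\to (BK)_{hH}$. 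So the cleanest route is to form the square of spectra, in which all maps are inclusions of summands indexed by conjugacy classes. There, being a pushout reduces to the set-theoretic decomposition of conjugacy classes together with the identifications of centralisers above, and the components indexed by $e$ are handled by the cofibre description.

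Since $\thh(R)\otimes\mathbb{S}[-]$ and the canonical functor to $\CycSp$ preserve colimits, we get a pushout of cyclotomic spectra. By \cite[Theorem 2.7]{clausen2021}, $\tc(-)/p$ preserves colimits of connective cyclotomic spectra, and therefore $\tc(-;p)^\wedge_p$ carries the square to a pushout. The three corners with $hH$ are identified as in Proposition \ref{tc-normal}, applied to the families $\{L\le K\}$ and $\{e\}$. This gives $\tc(R[K];p)_{hH}$ and $\tc(R;p)_{hH}$.

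For the splitting when $p\nmid|H|$, we use the fact that $\tc(R;p)$ is $p$-complete and carries the trivial $H$-action. Then $\tc(R;p)_{hH}\to\tc(R;p)$ is an equivalence after $p$-completion, since the transfer splits it. Likewise $\tc(R;p)\to\tc(R[H];p)$ is split by the augmentation $R[H]\to R$. Hence the top map of the square is split injective, and so the bottom map in the pushout is split as well, with complement $\mathrm{cofib}(\tc(R)\to\tc(R[H]))$. The step I expect to be the main obstacle is the precise identification of the gluing along the identity component, namely that $BH\to (BK)_{hH}$ becomes an equivalence after $\tc(-;p)^\wedge_p$ or is correctly accounted for in the square. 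If that identification fails, the fallback is to verify the pushout summandwise on the level of cyclotomic spectra indexed by conjugacy classes, as described above.
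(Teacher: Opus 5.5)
\textbf{Gap: the gluing object in your pushout.} Your overall strategy matches the paper's. You split $\pi_0(LBG)$ using the Frobenius structure. You identify the non-$K$ components with $LBH\setminus BH$, using $C_G(h)=C_H(h)$ and the fact that $G$-conjugate elements of $H\setminus\{e\}$ are already $H$-conjugate. You then glue, apply $\tc(\thh(R)\otimes\mathbb{S}[-];p)$, and identify corners via Proposition~\ref{tc-normal}. Your first decomposition $LBG\simeq L_KBG\sqcup_{BG}L_{\bigcup_g H^g}BG$ is also correct.

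The gap is in the step where you rewrite this ``in terms of whole loop spaces''. The square with corner $(L_{\{e\}}BK)_{hH}\simeq BG$, mapping to $LBH$ ``through $BH$'', is wrong.
\begin{itemize}
\item The only available map $BG\to BH$ comes from the projection $G\to G/K\cong H$.
\item With that map the square does not commute. Going through $LBH$, the constant loops $BG$ land in $LBG$ via $BG\to BH\to BG$, which kills $K$, rather than via the identity.
\item Its pushout would have $BH$ in place of the identity component $BG$ of $LBG$.
\item It is inconsistent with the statement: Proposition~\ref{tc-normal} for $\{e\}\lhd G$ turns this corner into $\tc(R;p)_{hG}$, not $\tc(R;p)_{hH}$.
\end{itemize}
Your proposed repair, that $BH\to BG$ becomes an equivalence after $\tc(\thh(R)\otimes\mathbb{S}[-];p)^\wedge_p$, fails in general. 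For $G=S_3$, $p=3$, $R=\mathbb{F}_3$, this map is $\tc(\mathbb{F}_3;3)\otimes(BC_2\to BS_3)_+$, and $H_3(BS_3;\mathbb{Z}_3)\cong\mathbb{Z}/3$ while $H_3(BC_2;\mathbb{Z}_3)=0$. Your fallback, that all maps in the square are inclusions of summands indexed by conjugacy classes, cannot hold either: the left vertical map $\mathbb{S}[BH]\to\mathbb{S}[BG]$ is not an inclusion of components.

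\textbf{The fix.} No comparison between $BH$ and $BG$ is needed. Take the gluing object to be $L_{\{e\}}BH=BH$ itself, mapping to $L_KBG$ through $BH\to BG=L_{\{e\}}BG$. The map $BH\hookrightarrow LBH$ is an inclusion of components, and its complement maps by an equivalence onto $LBG\setminus L_KBG$. Hence the square with corners $BH$, $LBH$, $L_KBG$, $LBG$ is a pushout of spaces, and so of spaces with Frobenius lifts, for any $H$. Equivalently, paste your correct pushout with $L_{\bigcup_g H^g}BG\simeq BG\sqcup_{BH}LBH$. Proposition~\ref{tc-normal} applied to $\{e\}\lhd H$ and to $K\lhd G$ then yields exactly the corners $\tc(R;p)_{hH}$ and $\tc(R[K];p)_{hH}$. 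This is the paper's square.

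\textbf{The splitting.} With this correction your splitting argument is valid, and it differs from the paper's. You split the top map using two facts: $\tc(R;p)_{hH}\to\tc(R;p)$ is a $p$-adic equivalence when $p\nmid|H|$, and the augmentation retracts $\tc(R)\to\tc(R[H])$. You then push the retraction out along the square. The paper instead notes that $H\setminus\{e\}$ is closed under $p$th powers when $p\nmid|H|$. So $\mathbb{S}[BH]\to\mathbb{S}[LBH]$ is already a summand inclusion of $p$-cyclotomic spectra before applying $\tc$.
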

\begin{proof}
    By standard properties of Frobenius groups, we have that the conjugacy classes of $G$ may be partitioned into
    \begin{enumerate}
        \item conjugacy classes lying in $K$,
        \item and $[h]=\{kh' \ | \ k\in K,h' \in [h]_H\}$ for $h \in H$.
    \end{enumerate}
    Furthermore, we have that $C_G(h)=C_H(h)$ for $h\in H$, so that
    \[ LBG \simeq L_KBG \bigsqcup (LBH \ \backslash \ BH) \]
    as spaces. The above decomposition is compatible with the Frobenius lift structure if the order of $H$ is prime to $p$, by Proposition \ref{sub-frob-lift}. It follows that we have a pushout of cyclotomic spectra, with the horizontal maps split if the order of $H$ is prime to $p$:
    % https://q.uiver.app/#q=WzAsNCxbMCwwLCJcXG1hdGhiYntTfVtCSF0iXSxbMiwwLCJcXG1hdGhiYntTfVtMQkhdIl0sWzAsMiwiXFxtYXRoYmJ7U31bTF9LQkddIl0sWzIsMiwiXFxtYXRoYmJ7U31bTEJHXSJdLFswLDJdLFsyLDNdLFswLDFdLFsxLDNdXQ==
\[\begin{tikzcd}
	{\mathbb{S}[BH]} && {\mathbb{S}[LBH]} \\
	\\
	{\mathbb{S}[L_KBG]} && {\mathbb{S}[LBG]}
	\arrow[from=1-1, to=1-3]
	\arrow[from=1-1, to=3-1]
	\arrow[from=1-3, to=3-3]
	\arrow[from=3-1, to=3-3]
\end{tikzcd}\]
    Applying $\tc\big(\thh(R)\otimes -\big)$ to the upper horizontal map produces the assembly map $\tc(R)_{hH} \xrightarrow{} \tc(R[H])$, by Proposition \ref{tc-normal}. Similarly, applying $\tc\big(\thh(R)\otimes -\big)$ to the lower horizontal map produces the relative assembly map $\tc(R[K])_{hH} \xrightarrow{} \tc(R[G])$. Thus after applying $\tc$ we obtain the stated pushout.
    
\end{proof}
\begin{remark}
    For a Frobenius group $G \cong K \rtimes H$, the complex representation theory of $G$ is determined by that of $K$ and $H$. Indeed, the irreducible representations of $G$ are precisely those of $H$ (via inflation), along with those of $K$ (via induction) up to $H$-conjugation. Proposition \ref{tc-frob-groups} is an exactly analogous statement.
\end{remark}

Proposition \ref{tc-frob-groups} is particularly useful when $R=k$ is a perfect algebra of characteristic $p$ and the order of $H$ is prime to $p$. In this case, $k[H]$ is semi-simple and so both $\tc(k)$ and $\tc(k[H])$ are $0$ in positive degrees (\cite[Corollary IV.4.10]{nikolaus2018topological}). Also $\tc(k[K])$ is a module over $\tc(k)_{\geq 0} \simeq \mathbb{Z}_p$, so that taking $H$-orbits is exact by Maschke's theorem. Then Proposition \ref{tc-frob-groups} shows that
\[ \tc_i(k[G];p) \simeq \tc_i(k[K];p)_{H} \]
for $i>0$.

\begin{example}
    Let $k$ be a perfect algebra of characteristic $p$, for $p>2$. Then
    \[ \tc_i(k[D_{2p}];p) \simeq \tc_i(k[C_p];p)_{C_2} \]
    for $i>0$, where $C_2$ acts on $\tc_i(k[C_p])$ via the action on $C_p$.

    Suppose now $k$ has characteristic $2$ and contains a $p^\mathrm{th}$ root of unity. Then $k[C_p]\simeq \prod_pk$, and $\tc(k[C_p];2)\simeq\tc(k)^{\oplus p}$, where the action of $C_2$ is free away from the image of $\tc(k;2) \xrightarrow{} \tc(k[C_p];2)$. The map $\tc(k)_{hC_2} \xrightarrow{} \tc(k[C_p];2)_{hC_2}$ is thus an equivalence in positive degrees and injective on $\pi_0$, and so by Proposition \ref{tc-frob-groups},
    \[\tc_i(k[D_{2p}];2) \simeq  \tc_i(k[C_2];2)  \]
    for $i>0$.
\end{example}
\section{Elementary abelian group rings}
In this section, we compute the groups $\tc_i(k[C_p^r];p)$ for $k$ a perfect field of characteristic $p$. Our approach is to use Theorem \ref{induct-tc} to reduce to cyclic subgroups, in combination with the assembly map. The category $\cyclicorbit(C_p^r)$ is particularly simple. It contains $G/e$ and $G/H$ as $H$ varies over the $(p^r-1)/(p-1)$ cyclic subgroups of order $p$. There are no morphisms between distinct subgroups of order $p$.

We also calculate the action of $H \leq \mathrm{GL}_r(\mathbb{F}_p)$ on $\tc_i(k[C_p^r];p)$ for subgroups $H$ with the property that $H$ acts freely on $\mathbb{F}_p^r \ \backslash \ \{0\} $. A key example is the action of $C_{p^r-1}$ on $\tc_i(k[C_p^r])$ via the Singer cycle $C_{p^r-1} \cong \mathbb{F}_{p^r}^\times \leq  \mathrm{GL}_r(\mathbb{F}_p)$.

\subsection{Preliminaries}

We fix a perfect field $k$ of characteristic $p$. For ease of notation, we make the following conventions in this section.

\begin{definition}\label{tc-spc-def}
    Let $\tc(-;k)$ denote the functor $\mathcal{S} \xrightarrow{} \Sp$ given by
    \[ X \mapsto \tc\big( \thh(k) \otimes \mathbb{S}[LX];p\big)\]
    so that upon picking a basepoint we have an equivalence $\tc(X;k) \simeq \tc(k[\Omega X];p).$ Observe that $\tc(X;k)$ is $p$-complete, and that it is a module over $\tc(k)_{\geq0}\simeq \mathbb{Z}_p$ (see Remark \ref{p-comp-remark}).
\end{definition}

\begin{definition}
    For $\mathcal{C}$ a presentable stable category and $F: \mathcal{S} \xrightarrow{} \mathcal{C}$, let $\widetilde{F}$ be the functor $\widetilde{F}(X)=\mathrm{fib}( F(X) \xrightarrow{} F(*) ).$ For $M\in \mathcal{C}$, we write $C(-;M): \ \mathcal{S} \xrightarrow{} \mathcal{C}$ for $X\mapsto X\otimes M$.
\end{definition}

Recall that for any space $X$, there is an assembly map
\[ C_*(X;\tc(k)) \xrightarrow{} \tc(X;k) \]
in the convention of Definition \ref{tc-spc-def}. We will be interested in the cofibre of the assembly map.

\begin{definition}
    Let $\tc^W(-;k): \mathcal{S} \xrightarrow{} \Sp$ be the cofibre of assembly, so that there is a cofibre sequence
    \[ C_*(X;\tc(k)) \xrightarrow{} \tc(-;k) \xrightarrow{}\tc^W(-;k). \]
\end{definition}

Observe that the functors $\widetilde{F}$ and $\tc^W$ defined above both have the property that they are $0$ when evaluated on the point. The significance of such functors is that they are easily understood after pullback along $\cyclicorbit(C_p^r) \xrightarrow{} \mathcal{S}$, as we now explain.

\begin{lemma}\label{red-cyc-functor}
    For $\mathcal{C}$ a presentable stable category, let $\mathrm{Fun}^\mathrm{red}(\cyclicorbit(C_p^r), \mathcal{C})$ denote the full subcategory of functors with $F(G/e) \simeq 0$. Then for $\cyclicorbit^*(C_p^r)$ the orbit category on non-trivial cyclic subgroups, pullback along $\cyclicorbit^*(C_p^r) \hookrightarrow \cyclicorbit(C_p^r)$ induces an equivalence
    \[ \mathrm{Fun}(\cyclicorbit^*(C_p^r), \mathcal{C}) \xrightarrow{\simeq} \mathrm{Fun}^\mathrm{red}(\cyclicorbit(C_p^r), \mathcal{C}) \]
\end{lemma}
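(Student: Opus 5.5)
The plan is to exhibit an explicit inverse: extension by zero along the inclusion $i:\cyclicorbit^*(C_p^r) \hookrightarrow \cyclicorbit(C_p^r)$. First I would describe the category $\cyclicorbit(C_p^r)$ concretely. Write $G=C_p^r$. Since $G$ is abelian, for cyclic subgroups $H,H'$ we have $\mathrm{Hom}_G(G/H,G/H') \cong G/H'$ if $H\le H'$, and it is empty otherwise. So $G/e$ admits maps to every object, there are no maps from any $G/H$ with $H\neq e$ back to $G/e$, and since the non-trivial cyclic subgroups all have order $p$, the only maps between non-trivial orbits are the automorphisms of a single $G/H$. In particular $G/e$ is an initial-type object: no non-identity-orbit object maps into it, and $\cyclicorbit^*(C_p^r)$ is the full subcategory obtained by deleting $G/e$. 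Thus $\cyclicorbit^*(C_p^r)$ is a sieve-complement, i.e. a cosieve: if $X\to Y$ and $X$ lies in it, then so does $Y$.

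Next I would consider the left Kan extension $i_!$. For a cosieve inclusion, left Kan extension is extension by zero. Concretely, $(i_!F)(G/e)$ is the colimit over the comma category $\cyclicorbit^*(C_p^r)_{/G/e}$, which is empty, so this value is the initial object $0$ of the stable category $\mathcal{C}$. On the other hand, for $G/H\in\cyclicorbit^*$ the comma category $\cyclicorbit^*_{/G/H}$ has the terminal object $\mathrm{id}_{G/H}$, because $i$ is fully faithful. Hence $i^*i_!\simeq \mathrm{id}$, and $i_!$ is fully faithful with essential image contained in $\mathrm{Fun}^\mathrm{red}$.

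It then remains to show that every reduced $F$ lies in the essential image, i.e. that the counit $i_!i^*F\to F$ is an equivalence. On the objects $G/H$ with $H\neq e$ this holds by the previous step, and at $G/e$ both sides are $0$. Since equivalences in functor categories are detected pointwise, the counit is an equivalence. This yields mutually inverse equivalences $i^*$ and $i_!$ between $\mathrm{Fun}(\cyclicorbit^*(C_p^r),\mathcal{C})$ and $\mathrm{Fun}^\mathrm{red}(\cyclicorbit(C_p^r),\mathcal{C})$. Note that $\mathcal{C}$ is presentable, so $i_!$ exists.

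The main point requiring care is the verification that $G/e$ receives no maps from the other orbits, since this is what makes the comma category empty. There are no morphisms $G/H\to G/e$ for $H\neq e$, because a $G$-map from $G/H$ to $G/e$ would force $H\le e$. Beyond this check, the argument is formal Kan-extension bookkeeping.
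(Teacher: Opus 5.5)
Your proof is correct and takes essentially the same approach as the paper. The paper's one-line proof rests on the same observation you isolate, that there are no maps $G/H \to G/e$ for $H \neq e$. Your identification of $\mathrm{Fun}^{\mathrm{red}}$ with the functors left Kan extended by zero from the cosieve $\cyclicorbit^*(C_p^r)$ is the standard way of making that precise, and since you show $i^*$ and $i_!$ are inverse equivalences, it covers either reading of the direction of the arrow in the statement.
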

\begin{proof}
    This is immediate, since there are no maps $G/H \xrightarrow{} G/e$ for $H$ non-trivial.
\end{proof}

Moreover, any functor on $\cyclicorbit^*(C_p^r)$ that is restricted from a functor on spaces must in fact be constant.

\begin{lemma}\label{trivial-functor}
    The functor $\cyclicorbit^*(C_p^r) \xrightarrow{} \cyclicorbit(C_p^r) \xrightarrow{} \mathcal{S}$ factors through the point. Hence the image of the restriction $\mathrm{Fun}(\mathcal{S},\mathcal{C})\xrightarrow{}\mathrm{Fun}(\cyclicorbit^*(C_p^r),\mathcal{C})$ lies in the constant functors.
\end{lemma}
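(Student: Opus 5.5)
The plan is to describe $\cyclicorbit^*(C_p^r)$ explicitly and check that the composite functor to $\mathcal{S}$ is equivalent to the constant functor at the point. Write $G=C_p^r$. The objects of $\cyclicorbit^*(G)$ are the orbits $G/H$ for the $(p^r-1)/(p-1)$ subgroups $H$ of order $p$. There are no $G$-maps $G/H\to G/H'$ when $H\neq H'$, since such a map would require $H\le H'$ up to conjugacy and $G$ is abelian. The endomorphisms of $G/H$ are the automorphisms $\mathrm{Map}_G(G/H,G/H)\cong N_G(H)/H=G/H$. So $\cyclicorbit^*(G)$ is the disjoint union of the one-object groupoids $B(G/H)$, one for each $H$. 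A functor out of it is therefore the same as a choice, for each $H$, of an object of the target with an action of the discrete group $G/H$. The task reduces to showing that, for each $H$, the action of $G/H$ on $(G/H)_{hG}\simeq BH$ is trivial in $\mathrm{Fun}(B(G/H),\mathcal{S})$. Triviality here means as an action on the object, not merely on homotopy groups.

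To identify this action, note that the functor $\mathcal{O}(G)\to\mathcal{S}$, $X\mapsto X_{hG}$, can be modelled by the action groupoid $X/\!/G$. For $X=G/H$, the groupoid $(G/H)/\!/G$ is equivalent to $BH$ through the inclusion of the basepoint $eH$. The automorphism $r_g\colon xH\mapsto xgH$ induces a functor on $(G/H)/\!/G$ which, after transporting along this equivalence, is conjugation by $g$ on $BH$. Since $G$ is abelian, conjugation by $g$ is the identity homomorphism of $H$. However, an action of $G/H$ on the $1$-type $BH$ is not determined by the individual maps alone. One must also check the coherence data, namely a functor $B(G/H)\to\mathcal{S}$ landing in the component of $BH$, whose classifying data is a map $B(G/H)\to B\mathrm{Aut}(BH)$. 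Here $\mathrm{Aut}(BH)\simeq BH\rtimes\mathrm{Out}(H)$-type data arises: $\pi_0\mathrm{Aut}(BH)=\mathrm{Out}(H)$ and $\pi_1=Z(H)=H$.

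This coherence is the main obstacle. There is a potential extension class in $H^2(G/H;H)$ that does not obviously vanish; it is the class of the extension $H\to G\to G/H$. I would handle it directly rather than through obstruction theory. By construction the action is strictly given by $(G/H)/\!/G$ with $G/H$ acting by $r_g$, so $\colim$/homotopy orbit considerations are not needed: we only need an equivalence of functors $B(G/H)\to\mathcal{S}$.

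I would also double-check whether the stated factorization is needed strictly or only after applying the $\mathcal{C}$-valued functors. The second sentence of the lemma only uses the first: a functor $F\colon\mathcal{S}\to\mathcal{C}$ composed with a functor factoring through $*$ is constant at $F(*)$. If full coherence fails, say because the extension class is nonzero, I would fall back to proving only what later arguments need, namely that the $G/H$-actions on $\widetilde F(BH)$ are trivial. I expect this to be the one genuinely delicate point.
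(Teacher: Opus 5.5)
You set the problem up correctly. $\cyclicorbit^*(C_p^r)$ is the disjoint union of the groupoids $B(G/H)$ over the subgroups $H$ of order $p$. So the lemma amounts to showing that each action of $G/H$ on $(G/H)_{hG}\simeq BH$ is trivial in $\mathrm{Fun}(B(G/H),\mathcal{S})$. You also rightly identify that the only real content is the coherence of this trivialization, with obstruction the class of the extension $H\to G\to G/H$. But you stop there. You say this class ``does not obviously vanish'', replace the argument with a restatement (``we only need an equivalence of functors''), and propose a fallback. So the lemma is not proved.

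The missing observation is elementary. $G=C_p^r$ is an $\mathbb{F}_p$-vector space, so every subgroup $H$ of order $p$ is a direct summand: $G\cong H\times K$ with $K\cong G/H$, and the extension class is zero. This splitting is exactly the input the statement needs. For a non-split extension such as $C_p\to C_{p^2}\to C_p$, the action of $G/H$ on $BH$ is trivial on homotopy groups but not trivial as an action, since its homotopy orbits are $BC_{p^2}\not\simeq BC_p\times BC_p$.

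The paper turns the splitting into a coherent trivialization as follows. It factors $X\mapsto X_{hG}$ on $G$-sets through $X\mapsto X_{hK}$, which lands in $H$-spaces and sends $G/H$ to the terminal object $H/H$. Hence $\mathrm{Aut}_{\mathcal{O}(G)}(G/H)\to\mathrm{Aut}(BH)$ factors through $\mathrm{Aut}_{\mathcal{O}(H)}(H/H)\simeq *$.

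Equivalently, in your own action-groupoid model, the projection $G=H\times K\to H$ induces a functor $(G/H)/\!/G\to BH$. It sends a morphism $(h,k')$ to $h$. This functor is strictly equivariant for right translation by $K$ on the source and the trivial $K$-action on $BH$. It is also an equivalence of underlying groupoids, hence an equivalence in $\mathrm{Fun}(BK,\mathcal{S})$.

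Your fallback does not avoid the issue. Triviality on homotopy groups is too weak for the later uses, for example identifying the colimit over $\cyclicorbit^*(C_p^r)$ with $\bigoplus_H \mathbb{S}[BC_p^{r-1}]\otimes\tc^W(BC_p;k)$ in Proposition \ref{tcw-calc}. Coherent triviality after applying $\widetilde F$ needs the same splitting input.
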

\begin{proof}
    The square
    \[\begin{tikzcd}
	{\cyclicorbit^*(C_p^r)} && {\cyclicorbit(C_p^r)} \\
	\\
	{*} && {\mathcal{S}}
	\arrow[from=1-1, to=1-3]
	\arrow[from=1-1, to=3-1]
	\arrow[from=1-3, to=3-3]
	\arrow["{* \mapsto BC_p}", from=3-1, to=3-3]
\end{tikzcd}\]
    commutes. Indeed, for $H$ a proper cyclic subgroup of $G=C_p^r$, $(G/H)_{hG} \simeq BC_p$, so it suffices to see that the map
    \[ \text{Aut}_{\mathcal{O}(C_p^r)}(G/H) \xrightarrow{} \text{Aut}(BC_p) \]
    is trivial. But we can write $G=C_p \times C_p^{r-1}$ and $H=C_p\times *$ and factor this map as
    \[ \text{Aut}_{\mathcal{O}(C_p^r)}\big((C_p \times C_p^{r-1})/(C_p\times *)\big) \xrightarrow{(-)_{hC_p^{r-1}}} \text{Aut}_{\mathcal{O}(C_p)}(C_p/C_p) \xrightarrow{} \text{Aut}(BC_p) \]
    and $\text{Aut}_{\mathcal{O}(C_p)}(C_p/C_p) \simeq *$.
\end{proof}
\begin{remark}
    More generally, for $H \leq G$ the action of the Weyl group $W_G(H)$ on $G/H$ induces a trivial action of $W_G(H)$ on $BH$ precisely when $G=W_G(H)\times H$.
\end{remark}

Combining Lemma \ref{red-cyc-functor} and Lemma \ref{trivial-functor}, we deduce that when manipulating $\widetilde{F}$ and $\tc^W$ as functors on $\cyclicorbit(C_p^r)$, there are no equivariant considerations. 

We now recall the calculation of $\tc(k)$ for $k$ a perfect field of characteristic $p$. Let $W(k)$ be the ($p$-typical) Witt vectors of $k$ and $\phi: W(k) \xrightarrow{} W(k)$ the Frobenius. Then there is an exact sequence (\cite[Theorem B]{hesselholt1997k}, \cite{krause2018lectures}):

% https://q.uiver.app/#q=WzAsNixbMCwwLCIwIl0sWzEsMCwiXFx0Y18wKGspIl0sWzIsMCwiVyhrKSJdLFszLDAsIlcoaykiXSxbNCwwLCJcXHRjX3stMX0oaykiXSxbNSwwLCIwIl0sWzAsMV0sWzEsMl0sWzIsMywiXFxtYXRocm17SWR9LVxccGhpIl0sWzMsNF0sWzQsNV1d
\[\begin{tikzcd}
	0 & {\tc_0(k)} & {W(k)} & {W(k)} & {\tc_{-1}(k)} & 0
	\arrow[from=1-1, to=1-2]
	\arrow[from=1-2, to=1-3]
	\arrow["{\mathrm{Id}-\phi}", from=1-3, to=1-4]
	\arrow[from=1-4, to=1-5]
	\arrow[from=1-5, to=1-6]
\end{tikzcd}\]

We observe the following. For any $k$, $\tc_0(k) \simeq \mathbb{Z}_p$. For a finite field $k$, $\tc_{-1}(k)\simeq \mathbb{Z}_p$. For $k$ algebraically closed, $\tc_{-1}(k) \simeq 0$, so that $\tc(k) \simeq \mathbb{Z}_p$. Now for any $k$, the map $k \xrightarrow{} \overline{k}$ to the algebraic closure induces a map $\tc(k) \xrightarrow{} \tc(\overline{k}) \simeq \mathbb{Z}_p$ which splits the map $\mathbb{Z}_p \simeq \tc(k)_{\geq 0} \xrightarrow{} \tc(k)$. This gives a canonical splitting $\tc(k) \simeq \mathbb{Z}_p \oplus \tc_{-1}(k)$, which we will make use of.

Next, we recall several facts about the rank $1$ case $\tc(BC_p;k)$, which we will make use of. The algebra $k[C_p]$ is isomorphic to the truncated polynomial algebra $k[x]/(x^p)$, so by \cite{hesselholt1997cyclic} (see also \cite{speirs2020k}), 
\[ \tc_*(BC_p;k) \simeq \begin{cases}
    k^{n(p-1)} & *=2n-1 \\
    \mathbb{Z}_p & *=0 \\
    0 & \mathrm{otherwise}
\end{cases} \]

There is an assembly map for $\tc(BC_p;k)$, discussed in \cite{hesselholt2020topological}:
\[ C_*(BC_p;\tc(k)) \xrightarrow{} \tc(BC_p;k) \xrightarrow{} \tc^W(BC_p;k) \]
By \cite[Remark 1.4.8]{hesselholt2020topological}, the groups $\tc^W_*(BC_p;k)$ are given by
\[ \tc^W_*(BC_p;k) \simeq \begin{cases}
    k^{n(p-1)} & *=2n-1 \\
    \mathbb{Z}_p & *=0 \\
    0 & \mathrm{otherwise}
\end{cases} \]
and so are equal to those of $\tc_*(BC_p;k)$, although the natural map is not an equivalence.

We record two technical lemmas about the assembly map.
\begin{lemma}\label{BC_p-split-inj}
    The composite
    \[ C_*(BC_p; \mathbb{Z}_p) \xrightarrow{} C_*(BC_p;\tc(k)) \xrightarrow{} \tc(BC_p;k) \]
    where the first map is induced by $\mathbb{Z}_p \simeq \tc(k)_{\geq0} \xrightarrow{} \tc(k)$ is injective on homotopy. Moreover, the associated map $\widetilde{C}(BC_p;\mathbb{Z}_p) \xrightarrow{} \widetilde{\tc}(BC_p;k)$ admits a retraction.
\end{lemma}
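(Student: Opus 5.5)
We will reduce both claims to the case of the trivial group, where the assembly map is visibly split, by exploiting the retraction $BC_p \to *$ together with the special structure of $\tc(k)$. The composite in question is the assembly map $C_*(BC_p;\tc(k)) \to \tc(BC_p;k)$ precomposed with $C_*(BC_p;\mathbb{Z}_p) \to C_*(BC_p;\tc(k))$. Since assembly is natural in the space, it is a map of functors $\mathcal{S} \to \Sp$. Both sides split off their values on the point via $* \to BC_p \to *$. Hence the map decomposes as the map on the point, $\mathbb{Z}_p \to \tc(k) \to \tc(k)$, direct sum the reduced map
\[ \widetilde{C}(BC_p;\mathbb{Z}_p) \to \widetilde{\tc}(BC_p;k). \]
On the point the map is the inclusion $\mathbb{Z}_p \simeq \tc(k)_{\geq 0} \to \tc(k)$, which is split by the canonical splitting $\tc(k)\simeq \mathbb{Z}_p\oplus\tc_{-1}(k)$ recalled above. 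So both claims follow once we produce a retraction of the reduced map, since a split map is injective on homotopy.

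To construct the retraction, we would first try to use functoriality in the coefficient field. The splitting $\tc(k) \to \tc(\overline{k}) \simeq \mathbb{Z}_p$ comes from $k \to \overline{k}$, which is natural in $BC_p$. So it suffices to treat $k=\overline{k}$, provided we then compose with $\widetilde{\tc}(BC_p;k)\to\widetilde{\tc}(BC_p;\overline{k})$. In that case $\tc(\overline{k})\simeq \mathbb{Z}_p$, and the map in question is exactly the reduced assembly map
\[ \widetilde{C}(BC_p;\mathbb{Z}_p) \to \widetilde{\tc}(BC_p;\overline{k}). \]
Next we build a left inverse to this assembly map. A natural candidate is a trace-type map out of $\tc$ landing in something computing $\mathbb{Z}_p$-homology of $BC_p$. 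Concretely, we use the map $\tc(\thh(\overline k)\otimes\mathbb{S}[LBC_p]) \to (\thh(\overline{k})\otimes \mathbb{S}[LBC_p])^{hS^1}$. We compose it with the projection to the component of $LBC_p$ indexed by the trivial conjugacy class, which is $BC_p$ with trivial $S^1$-action. We then use $\thh(\overline k)^{hS^1} = \mathrm{TC}^-(\overline k)$, which has $\pi_0 = W(\overline k)$, and truncate to $\tau_{\ge 0}$ or restrict along $\mathbb{Z}_p \to W(\overline{k})$. The resulting composite restricts on $C_*(BC_p;\mathbb{Z}_p)$ to the map induced by $\mathbb{Z}_p\to \mathrm{TC}^-(\overline{k})$. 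We must then produce a map $C_*(BC_p;\mathrm{TC}^-(\overline k))\to C_*(BC_p;\mathbb{Z}_p)$ retracting it. Using $\mathbb{Z}_p$-module structures (Remark \ref{p-comp-remark}), we would try the composite $\mathrm{TC}^-(\overline k)\to \tau_{\le 0}\mathrm{TC}^-(\overline{k}) = W(\overline k)\to$ a $\mathbb{Z}_p$-linear retraction $W(\overline{k})\to\mathbb{Z}_p$. Such a retraction exists because $W(\overline{k})/\mathbb{Z}_p$ is torsion-free and $p$-complete, so the inclusion splits as $p$-complete modules.

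The main obstacle is compatibility with the $S^1$-homotopy fixed points. The component $BC_p$ of $LBC_p$ carries trivial $S^1$-action, so $(\thh(\overline{k})\otimes\mathbb{S}[BC_p])^{hS^1}$ is not simply $\mathrm{TC}^-(\overline k)\otimes BC_p$. Since $BC_p$ is not finite, we must check that the natural map from $\mathrm{TC}^-(\overline k)\otimes \mathbb{S}[BC_p]$ is an equivalence after $p$-completion in the relevant connective range. The projection to $\tau_{\le0}$ should make this manageable. If this fails, the fallback is a homotopy-group argument. Compare $\pi_*\widetilde{C}(BC_p;\mathbb{Z}_p)$, which is $\mathbb{Z}/p$ in odd degrees, with the known $\tc^W$ and $\tc$ groups. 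Show that the map on homotopy is injective with image a direct summand, which suffices for $\mathbb{F}_p$-vector-space targets. Then lift this to a retraction of spectra, using that both sides are $\mathbb{Z}_p$-modules and hence split into Eilenberg–MacLane pieces over $\mathbb{Z}_p$ in this range.
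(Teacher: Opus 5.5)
There is a gap: your main route rests on a comparison map that is not an equivalence, and your fallback leaves out the argument for injectivity, which is the content of the lemma.

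\textbf{What is fine.} Your preliminary reductions are sound. Splitting off the value on the point via $*\to BC_p\to *$ handles $\pi_0$, as in the paper. Passing to $\overline{k}$ is the same device the paper uses later in Proposition \ref{split-inj-ass}.

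\textbf{The main route fails.} It needs the comparison map $\mathrm{TC}^-(\overline{k})\otimes\mathbb{S}[BC_p]\to(\thh(\overline{k})\otimes\mathbb{S}[BC_p])^{hS^1}$ to be invertible, at least in a range. That is the only way to obtain a map into $C_*(BC_p;\mathrm{TC}^-(\overline k))$ to which a retraction $W(\overline k)\to\mathbb{Z}_p$ could be applied. The map is not invertible.
\begin{itemize}
\item Homotopy fixed points commute with products, but not with the infinite colimit that builds $\mathbb{S}[BC_p]$.
\item In each odd degree, the target therefore receives contributions from infinitely many cells of $BC_p$ assembled as a product.
\item The source $\mathrm{TC}^-(\overline{k})\otimes\mathbb{S}[BC_p]$ assembles the same contributions as a direct sum. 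Since $\pi_*\mathrm{TC}^-(\overline k)=W(\overline k)[u,v]/(uv-p)$ is nonzero in every even degree, this mismatch occurs in every odd degree, positive ones included.
\end{itemize}
Truncating does not repair this. Note also that $\tau_{\le 0}\mathrm{TC}^-(\overline k)\neq W(\overline k)$, since $\pi_{-2j}$ is $W(\overline k)$ for all $j\ge 0$. This non-commutation is exactly why Theorem \ref{induct-tc} relies on $\tc$, rather than $\mathrm{TC}^-$, commuting with colimits.

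\textbf{The fallback is the paper's argument with its key step missing.} Knowing the groups $\tc_*(BC_p;k)$ and $\tc^W_*(BC_p;k)$ abstractly does not give injectivity. You need the long exact sequence of the assembly cofibre sequence $C_*(BC_p;\tc(k))\to\tc(BC_p;k)\to\tc^W(BC_p;k)$:
\begin{itemize}
\item For $n$ odd, the kernel of $H_n(BC_p;\tc(k))\to\tc_n(BC_p;k)$ is the image of $\tc^W_{n+1}(BC_p;k)$. This vanishes because $\tc^W_*(BC_p;k)$ is zero in positive even degrees.
\item $H_n(BC_p;\mathbb{Z}_p)\to H_n(BC_p;\tc(k))$ is split injective because $\tc(k)\simeq\mathbb{Z}_p\oplus\Sigma^{-1}\tc_{-1}(k)$.
\item In positive even degrees the source is zero.
\end{itemize}
No passage to $\overline{k}$ is needed here. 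All groups involved have exponent $p$, so the injection on homotopy is split. Applying the Eilenberg--MacLane splitting of $\mathbb{Z}_p$-modules to this $\mathbb{Z}_p$-linear map (Remark \ref{p-comp-remark}) then gives the retraction, as you say.

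\textbf{A way to salvage your trace idea.} It does yield injectivity if you trace only to $\thh$, not to $\mathrm{TC}^-$. Follow the composite with $\tc(BC_p;k)\to\thh(k)\otimes\mathbb{S}[LBC_p]$. Then project onto the constant-loop summand $\thh(k)\otimes\mathbb{S}[BC_p]$, and apply $\thh(k)\to k$. By naturality of assembly, the result is the map $C_*(BC_p;\mathbb{Z}_p)\to C_*(BC_p;k)$ induced by the unit $\mathbb{Z}_p\to k$. This is injective on $\pi_{>0}$, since $\mathbb{Z}/p\to k$ is injective in each odd degree. This argument avoids $\tc^W$ entirely. You still need the splitting argument at the end, though, because $\mathbb{Z}_p\to k$ itself cannot be retracted.
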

\begin{proof}
    The first statement is clear on $\pi_0$. In even degrees, $H_*(BC_p;\mathbb{Z}_p)$ is $0$, so there is nothing to show. In odd degrees, we see from the long exact sequence of assembly that $H_*(BC_p;\tc(k)) \xrightarrow{} \tc_*(BC_p;k)$ is injective, since $\tc^W_*(k)$ is $0$ in even degrees, and $H_*(BC_p;\mathbb{Z}_p) \xrightarrow{} H_*(BC_p;\tc(k))$ is (split) injective.

    For the second statement, observe that $\widetilde{C}(BC_p;\mathbb{Z}_p) \xrightarrow{} \widetilde{\tc}(BC_p;k)$ is a map of $\mathbb{Z}_p$-modules that is split injective on homotopy (since all homotopy groups have exponent $p$), from which it follows that the map is split injective.
\end{proof}

\begin{lemma}\label{BC_p-split-surj}
    The composite
    \[ \tc^W(BC_p;k) \xrightarrow{} \Sigma C(BC_p;\tc(k)) \xrightarrow{} \Sigma C(BC_p;\tc(k)_{<0})\simeq C(BC_p;\tc_{-1}(k))  \]
    is surjective on homotopy in positive degrees. Moreover, the associated map $\tc^W(BC_p;k) \xrightarrow{} \widetilde{C}(BC_p;\tc_{-1}(k))$ admits a section.
\end{lemma}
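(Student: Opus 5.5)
The plan is to work with the long exact sequence of the assembly cofibre sequence
\[ C_*(BC_p;\tc(k)) \xrightarrow{} \tc(BC_p;k) \xrightarrow{} \tc^W(BC_p;k) \xrightarrow{} \Sigma C_*(BC_p;\tc(k)), \]
combined with the canonical splitting $\tc(k)\simeq \mathbb{Z}_p\oplus \Sigma^{-1}H\tc_{-1}(k)$. This splitting gives
\[ \Sigma C_*(BC_p;\tc(k)) \simeq \Sigma C_*(BC_p;\mathbb{Z}_p)\oplus C_*(BC_p;\tc_{-1}(k)), \]
and the composite in the statement is the connecting map $\delta$ followed by projection onto the second summand.

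First I would show that $\delta$ is surjective onto the second summand in positive degrees. By exactness, the image of $\delta$ in degree $n$ equals the kernel of the assembly map
\[ H_{n-1}(BC_p;\tc(k)) \xrightarrow{} \tc_{n-1}(BC_p;k). \]
The degree-$(n-1)$ homology $H_{n-1}(BC_p;\tc(k))$ decomposes as $H_{n-1}(BC_p;\mathbb{Z}_p)\oplus H_n(BC_p;\tc_{-1}(k))$ (the second summand is the one whose suspension lies in degree $n$). By Lemma \ref{BC_p-split-inj}, the assembly map is injective on the first summand $H_*(BC_p;\mathbb{Z}_p)$.

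So the key claim is that the assembly map kills the summand $H_*(BC_p;\tc_{-1}(k))$. I would prove this by naturality in $k$ along $k\to\overline{k}$. The map $\tc(k)\to\tc(\overline{k})\simeq\mathbb{Z}_p$ kills $\tc_{-1}(k)$, but that alone does not show the summand dies in $\tc(BC_p;k)$. A cleaner argument uses degrees and the module structure. The summand $H_m(BC_p;\tc_{-1}(k))$ sits in degree $m-1$. For $m$ even and positive, $H_m(BC_p;\tc_{-1}(k))=\tc_{-1}(k)/p$ is concentrated in odd total degree $m-1$, where $\tc_{m-1}(BC_p;k)\cong k^{n(p-1)}$ is nonzero, so degree counting alone fails. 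Instead I would use that the assembly map $C_*(BC_p;\tc(k))\to\tc(BC_p;k)$ is $\tc(k)$-linear, and that $\tc(BC_p;k)$ is concentrated in degrees $\geq 0$ (with $\tc_*(BC_p;k)$ as recalled). The restriction to $C_*(BC_p;\Sigma^{-1}H\tc_{-1}(k))\simeq \Sigma^{-1}C_*(BC_p;\mathbb{Z}_p)\otimes_{\mathbb{Z}_p}\tc_{-1}(k)$ is then determined by the assembly map on $C_*(BC_p;\mathbb{Z}_p)$ together with the action of $\tc_{-1}(k)$ on $\tc_*(BC_p;k)$. That action is zero, because multiplication by a degree $-1$ class factors through $\tc(BC_p;k)_{\geq 0}$-module maps landing in the $\pi_{*-1}$ of a $k$-vector space, where $p$ acts by zero while $\tc_{-1}(k)=\mathrm{coker}(\mathrm{Id}-\phi)$ classes... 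This step is the main obstacle. My first attempt would compare with $\overline{k}$, where $\tc_{-1}$ vanishes, and use the fact that $\tc(BC_p;k)\to\tc(BC_p;\overline{k})$ is injective on homotopy, since $k^{n(p-1)}\to\overline{k}^{n(p-1)}$ is induced by the field extension. Granting this injectivity, naturality of assembly shows the $\tc_{-1}$ summand maps to zero, giving surjectivity.

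For the section, the map $\tc^W(BC_p;k)\to\widetilde{C}(BC_p;\tc_{-1}(k))$ is a map of $\mathbb{Z}_p$-modules. Its target has homotopy groups which are $\mathbb{F}_p$-vector spaces ($\tc_{-1}(k)/p$ and $p$-torsion) concentrated in positive degrees, and on those degrees the map is surjective. As in Lemma \ref{BC_p-split-inj}, I would realize a section degreewise. Choose generators in each degree and lift them. Each target class has order $p$, so it suffices to lift to elements killed by $p$. Since the source groups $\tc^W_{2n-1}=k^{n(p-1)}$ are $\mathbb{F}_p$-vector spaces in positive degrees, this is possible. Because the target is a sum of shifted $\mathbb{F}_p$-modules (an $H\mathbb{F}_p$-module over $\mathbb{Z}_p$, hence a wedge of Moore-type pieces), maps out of it are determined by choosing order-$p$ lifts. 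This produces the section.
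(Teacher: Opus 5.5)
The reduction is set up correctly. The composite is $\delta$ followed by projection, and $\mathrm{im}\,\delta=\ker(a)$ for the assembly map $a$. So everything comes down to showing that $a$ kills the summand $H_n(BC_p;\tc_{-1}(k))\subseteq \pi_{n-1}C_*(BC_p;\tc(k))$. Injectivity of $a$ on the $H_{n-1}(BC_p;\mathbb{Z}_p)$-summand plays no role here. \textbf{The gap is that you never establish this key claim}, and the reason you give for abandoning degree counting is a parity error.

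The group $\tc_{-1}(k)=\mathrm{coker}(\mathrm{Id}-\phi)$ is torsion-free. Indeed, if $px=y-\phi(y)$, then $\bar y=\bar y^p$, so $\bar y\in\mathbb{F}_p$ and $y=c+py'$ with $c\in\mathbb{Z}_p$; hence $x=y'-\phi(y')$. So $\tc_{-1}(k)$ is flat over $\mathbb{Z}_p$, and $H_m(BC_p;\tc_{-1}(k))\cong H_m(BC_p;\mathbb{Z}_p)\otimes\tc_{-1}(k)$. This is $\tc_{-1}(k)/p$ for $m$ \emph{odd} and $0$ for $m>0$ even; the $\mathrm{Tor}$ term that would sit in even degrees vanishes. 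Hence for $n>0$ the summand is nonzero only when $n$ is odd, and then $a$ maps it into $\tc_{n-1}(BC_p;k)$ with $n-1$ even. That group is $0$ for $n-1>0$. For $n=1$ it is $\tc_0(BC_p;k)\cong\mathbb{Z}_p$, which admits no nonzero map from the $p$-torsion group $\tc_{-1}(k)/p$. So $a$ kills the summand and surjectivity follows. This degree count is exactly the paper's proof.

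Your replacement arguments do not close the gap as written. The $\tc(k)$-linearity argument breaks off mid-sentence. The comparison with $\overline{k}$ only shows that the summand dies after mapping to $\tc(BC_p;\overline{k})$. To conclude, you would need injectivity of $\tc_j(BC_p;k)\to\tc_j(BC_p;\overline{k})$ for $j\geq 0$ (only these degrees matter). You explicitly grant this rather than prove it. It would require naturality in $k$ of the Hesselholt--Madsen identification $\tc_{2n-1}(BC_p;k)\cong k^{n(p-1)}$, an input the paper does not provide. Once the parity is corrected, none of this is needed.

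Your argument for the section is fine and agrees with the paper's. The target is a map of $\mathbb{Z}_p$-modules into a sum of shifted Eilenberg--MacLane spectra of $\mathbb{F}_p$-vector spaces. The map is surjective on homotopy, and the source has exponent-$p$ homotopy in the relevant degrees. Such a map admits a section.
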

\begin{proof}
    The groups $\tc_{-1}(k)$ are torsion free $\mathbb{Z}_p$-modules, so are flat. Hence $H_*(BC_p;\tc_{-1}(k))\cong H_*(BC_p;\mathbb{Z}_p)\otimes \tc_{-1}(k)$ is concentrated in odd degrees. The first statement then follows from the long exact sequence of assembly and the fact that $\tc_*(BC_p;k)$ is $0$ in even positive degrees.

    For the second statement, note that again $\tc^W(BC_p;k) \xrightarrow{} \widetilde{C}(BC_p;\tc_{-1}(k))$ is a map of $\mathbb{Z}_p$-modules, and is split surjective on homotopy (since all homotopy groups have exponent $p$), so that the map is split.
\end{proof}

\subsection{The assembly sequence for $\tc(BC_p^r;k)$}
We now analyse the assembly sequence
\[ C_*(BC_p^r;\tc(k)) \xrightarrow{} \tc(BC_p^r;k) \xrightarrow{} \tc^W(BC_p^r;k) \]
for $k$ a perfect field of characteristic $p$. We will show that the associated long exact sequence in homotopy reduces to short exact sequences, and that a certain key map is split injective. From this, we will deduce an abstract equivalence $\tc_*(BG;k) \cong \tc_*^W(BG;k)$ for $*>0$ (although the natural map will not be an equivalence).

Recall that there is a canonical splitting $\tc(k) \simeq \mathbb{Z}_p \oplus \Sigma^{-1}\tc_{-1}(k)$. Let us first consider the composite
\[ C_*(-;\mathbb{Z}_p) = C_*(-;\tc(k)_{\geq 0}) \xrightarrow{} C_*(-;\tc(k)) \xrightarrow{} \tc(-;k) \]
applied to $BC_p^r$.

\begin{prop}\label{split-inj-ass}
    The above map induces a split injection on $H_n(BC_p^r;\mathbb{Z}_p) \xrightarrow{} \tc_n(BC_p^r;k)$ for all $n$.
\end{prop}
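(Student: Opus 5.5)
We want to show that $H_n(BC_p^r;\mathbb{Z}_p)\to\tc_n(BC_p^r;k)$ is split injective for every $n$. The plan is to reduce, via naturality in the group, to the rank one case, Lemma \ref{BC_p-split-inj}, by exploiting the many homomorphisms between $C_p^r$ and $C_p$.

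In degree $0$ the map is the inclusion $\mathbb{Z}_p\to\tc_0(BC_p^r;k)$. It is split by the augmentation induced by $BC_p^r\to *$, since $\tc_0(*;k)=\tc_0(k)\cong\mathbb{Z}_p$. For $n>0$ we work with reduced theories: the map $\widetilde{C}(BC_p^r;\mathbb{Z}_p)\to\widetilde{\tc}(BC_p^r;k)$. Both sides are $\mathbb{Z}_p$-modules whose positive-degree homotopy is killed by $p$. The group $\tc_n(BC_p^r;k)$ is a $k$-vector space, hence an $\mathbb{F}_p$-vector space, and $\widetilde{H}_n(BC_p^r;\mathbb{Z}_p)$ is elementary abelian. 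So, as in the proof of Lemma \ref{BC_p-split-inj}, it suffices to prove injectivity on homotopy: an injection of $\mathbb{F}_p$-vector spaces splits.

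For injectivity, consider for each surjective homomorphism $\chi\colon C_p^r\to C_p$ the commutative square given by naturality of assembly:
\[
\widetilde{H}_n(BC_p^r;\mathbb{Z}_p)\to\widetilde{\tc}_n(BC_p^r;k)\xrightarrow{\chi_*}\widetilde{\tc}_n(BC_p;k),
\]
which agrees with
\[
\widetilde{H}_n(BC_p^r;\mathbb{Z}_p)\xrightarrow{\chi_*}\widetilde{H}_n(BC_p;\mathbb{Z}_p)\to\widetilde{\tc}_n(BC_p;k).
\]
By Lemma \ref{BC_p-split-inj} the last map is injective. Hence the kernel of our map lies in $\bigcap_\chi\ker(\chi_*)$, and it remains to show that the maps $\chi_*$ jointly detect $\widetilde{H}_n(BC_p^r;\mathbb{Z}_p)$.

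That joint detection is the main obstacle. It fails for the integral class detected only by products, e.g.\ in the Künneth decomposition the Tor terms and the tensor products of classes from several factors are not seen by one-dimensional quotients. For instance, $H_2(BC_p^2;\mathbb{Z})\cong\mathbb{Z}/p$ comes from $H_1\otimes H_1$ and maps to zero in every $H_2(BC_p)=0$. So the naive approach must be supplemented. I would instead use the cyclic assembly of Corollary \ref{cyc-induct-tc} together with the simple shape of $\cyclicorbit(C_p^r)$. By Lemmas \ref{red-cyc-functor} and \ref{trivial-functor}, the reduced functors restricted to $\cyclicorbit^*(C_p^r)$ are constant. Therefore the colimit over $\cyclicorbit(C_p^r)$ of the cofibre sequence $\widetilde{C}(-;\mathbb{Z}_p)\to\widetilde{\tc}(-;k)\to Q$ is computed by the same pushout formula for each term: the colimit of $F$ is $F(G/e)$ glued to $\bigoplus_H F(BC_p)$ over $\bigoplus_H F(BC_p^r \text{ fibre})$. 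Equivalently, the relevant pieces are those arising from the homotopy colimit over the $(p^r-1)/(p-1)$ lines.

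Concretely, I would form the map of colimit diagrams induced by the retraction of Lemma \ref{BC_p-split-inj}. Since the functors are constant on $\cyclicorbit^*$, that retraction is automatically natural on the reduced parts. I would also check that it is compatible with the value at $G/e$, where the homology $C_*(BC_p^r;\mathbb{Z}_p)$ itself is the colimit of $C_*(BH;\mathbb{Z}_p)$ only after the nontrivial cyclic assembly for homology. I would try first to write $C_*(BC_p^r;\mathbb{Z}_p)$ as the colimit over $\cyclicorbit(C_p^r)$ of a functor $D$ with a natural transformation $D\to\tc(-;k)$ that is objectwise split injective. For example, take $D$ to be the left Kan extension of $C_*(-;\mathbb{Z}_p)$ composed with $\tc$-assembly, and obtain the splitting by taking the colimit of the retractions. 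Verifying that this colimit of $D$ recovers group homology, and not just the homology of the cyclic-isotropy classifying space, is where the real work lies.
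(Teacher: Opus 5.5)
Your proposal has a genuine gap. Your first strategy, detecting classes through quotients $\chi\colon C_p^r\to C_p$, fails, as you observe. Your second is only an outline, and its essential step is missing.

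The ingredients you list are the paper's: cyclic assembly (Corollary \ref{cyc-induct-tc}), constancy of reduced functors on $\cyclicorbit^*(C_p^r)$ (Lemmas \ref{red-cyc-functor} and \ref{trivial-functor}), and Lemma \ref{BC_p-split-inj}. Together these give a natural retraction of $\widetilde{C}(-;\mathbb{Z}_p)\to\widetilde{\tc}(-;k)$ on $\cyclicorbit(C_p^r)$. But to take colimits you need a natural retraction of the \emph{unreduced} map $C(-;\mathbb{Z}_p)\to\tc(-;k)$, and ``checking compatibility at $G/e$'' does not provide one. This step is not formal, for two reasons:
\begin{itemize}
\item The functor $G/H\mapsto BH$ does not preserve basepoints, so $C(-;\mathbb{Z}_p)$ does not split naturally as $\widetilde{C}(-;\mathbb{Z}_p)\oplus\mathbb{Z}_p$. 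For $|H|=p$, the $H$-invariant map $G/e\to G/H$ induces an equivalence $C_*(*;\mathbb{Z}_p)_{hH}\simeq C_*(BH;\mathbb{Z}_p)$, which would have to factor through the augmentation if such a splitting existed.
\item The point values $\mathbb{Z}_p\to\tc(k)$ are not equivalent when $\tc_{-1}(k)\neq 0$.
\end{itemize}

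The missing idea is to replace $k$ by $\overline{k}$ first. Since $\mathbb{Z}_p\to\tc(k)\to\tc(\overline{k})$ is an equivalence, a retraction of $C_*(BC_p^r;\mathbb{Z}_p)\to\tc(BC_p^r;\overline{k})$ yields one for $k$. For $\overline{k}$ the map $C(*;\mathbb{Z}_p)\to\tc(*;\overline{k})$ is an equivalence. Hence the square with top row $\widetilde{C}(-;\mathbb{Z}_p)\to\widetilde{\tc}(-;\overline{k})$ and bottom row $C(-;\mathbb{Z}_p)\to\tc(-;\overline{k})$ is a pushout of functors on $\cyclicorbit(C_p^r)$. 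The reduced retraction $r$ then extends to the unreduced map via the pair $(j\circ r,\mathrm{id})$, where $j\colon\widetilde{C}\to C$. Taking colimits and applying cyclic assembly to both functors finishes the proof.

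Two further corrections. First, what you call ``the real work'' is immediate. $C_*(-;\mathbb{Z}_p)$ preserves colimits, and $\colim_{\cyclicorbit(G)}BH\simeq(E\mathcal{F}_{\mathrm{cyc}})_{hG}\simeq BG$; these are homotopy orbits of an underlying contractible space, not the orbit space. So homology satisfies cyclic assembly for free. Note also that the value at $G/e$ is $F(*)$, not $F(BC_p^r)$.

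Second, your preliminary reduction to injectivity is unjustified. At this stage $\tc_n(BC_p^r;k)$ is only known to be a $\mathbb{Z}_p$-module, not a $k$-vector space; its exponent-$p$ structure is deduced later using this very proposition. An injection of an $\mathbb{F}_p$-vector space into a group of exponent $p^2$ need not split. The paper avoids this issue by constructing the retraction at the level of spectra.
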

\begin{proof}
    We show that the map $C_*(BC_p^r;\mathbb{Z}_p) \xrightarrow{} \tc_*(BC_p^r;k)$ admits a retraction. The composite
    \[ \mathbb{Z}_p \xrightarrow{} \tc(k) \xrightarrow{} \tc(\overline{k}) \]
    is an equivalence, so by considering the diagram
    % https://q.uiver.app/#q=WzAsNSxbMCwwLCJDXyooQkNfcF5yO1xcbWF0aGJie1p9X3ApIl0sWzEsMCwiQ18qKEJDX3BecjtcXHRjKGspKSJdLFsyLDAsIkNfKihCQ19wXnI7XFx0YyhcXG92ZXJsaW5le2t9KSkiXSxbMSwxLCJcXHRjKEJDX3BecjtrKSJdLFsyLDEsIlxcdGMoQkNfcF5yO1xcb3ZlcmxpbmV7a30pIl0sWzAsMV0sWzEsM10sWzEsMl0sWzMsNF0sWzIsNF1d
\[\begin{tikzcd}
	{C_*(BC_p^r;\mathbb{Z}_p)} & {C_*(BC_p^r;\tc(k))} & {C_*(BC_p^r;\tc(\overline{k}))} \\
	& {\tc(BC_p^r;k)} & {\tc(BC_p^r;\overline{k})}
	\arrow[from=1-1, to=1-2]
	\arrow[from=1-2, to=1-3]
	\arrow[from=1-2, to=2-2]
	\arrow[from=1-3, to=2-3]
	\arrow[from=2-2, to=2-3]
\end{tikzcd}\]
    
    it is enough to show that $C_*(BC_p^r;\mathbb{Z}_p) \xrightarrow{} \tc(BC_p^r;\overline{k})$ admits a retraction. We may thus assume that $k$ is algebraically closed.
    
    Now $C_*(*;\mathbb{Z}_p)\simeq \tc(*;k)$, so there is a pushout of functors on $\cyclicorbit(C_p^r)$:
    % https://q.uiver.app/#q=WzAsNCxbMCwwLCJcXHdpZGV0aWxkZXtDfSgtO1xcbWF0aGJie1p9X3ApIl0sWzAsMSwiQygtO1xcbWF0aGJie1p9X3ApIl0sWzEsMSwiXFx0YygtO2spIl0sWzEsMCwiXFx3aWRldGlsZGV7XFx0Y30oLTtrKSJdLFswLDNdLFswLDFdLFsxLDJdLFszLDJdXQ==
\[\begin{tikzcd}
	{\widetilde{C}(-;\mathbb{Z}_p)} & {\widetilde{\tc}(-;k)} \\
	{C(-;\mathbb{Z}_p)} & {\tc(-;k)}
	\arrow[from=1-1, to=1-2]
	\arrow[from=1-1, to=2-1]
	\arrow[from=1-2, to=2-2]
	\arrow[from=2-1, to=2-2]
\end{tikzcd}\]
    We first show that the map $\widetilde{C}(-;\mathbb{Z}_p) \xrightarrow{} \widetilde{\tc}(-;k)$ admits a retraction. Both of these functors on $\cyclicorbit(C_p^r)$ are restricted from functors on $\mathcal{S}$, and vanish on the point, so by Lemma \ref{red-cyc-functor} and Lemma \ref{trivial-functor}, it suffices to show that the (non-equivariant) map $\widetilde{C}(BC_p;\mathbb{Z}_p) \xrightarrow{} \widetilde{\tc}(BC_p;k)$ admits a retraction, which is precisely Lemma \ref{BC_p-split-inj}. By standard properties of pushouts, the map $C_*(-;\mathbb{Z}_p) \xrightarrow{} \tc(-;k)$ then admits a retraction as functors on $\cyclicorbit(C_p^r)$.

    We conclude by Corollary \ref{cyc-induct-tc}. Indeed, the cyclic assembly maps
    \begin{align*}
        \underset{\mathcal{O}_{\mathcal{F}_\mathrm{cyc}}(C_p^r)}{\colim} \ C_*(BH;\mathbb{Z}_p)&\xrightarrow{\simeq}  C_*(BC_p^r;\mathbb{Z}_p) \\
        \underset{\mathcal{O}_{\mathcal{F}_\mathrm{cyc}}(C_p^r)}{\colim} \ \tc(BH;k)&\xrightarrow{\simeq}  \tc(BC_p^r;k) \\
    \end{align*}
    are equivalences, so we deduce that $C_*(BC_p^r;\mathbb{Z}_p) \xrightarrow{} \tc_*(BC_p^r;k)$ admits a retraction.
\end{proof}

Next, we consider the composite
\[ \tc^W(-;k) \xrightarrow{} \Sigma C_*(BC_p^r;\tc(k)) \xrightarrow{} \Sigma C_*(-;\tc(k)_{<0}) \simeq C_*(-;\tc_{-1}(k))\]
where the first map is the boundary map for the assembly cofibre sequence.

\begin{prop}\label{bound-surj-ass}
    The above map induces surjections $\tc^W_n(BC_p^r;k) \xrightarrow{} H_n(BC_p^r;\tc_{-1}(k))$ for $n>0$.
\end{prop}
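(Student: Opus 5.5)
The plan is to mirror the proof of Proposition \ref{split-inj-ass}: first promote the statement to a splitting of functors on $\cyclicorbit(C_p^r)$, then pass to colimits using cyclic assembly. Concretely, I will show that the map
\[ \tc^W(BC_p^r;k) \xrightarrow{} \widetilde{C}(BC_p^r;\tc_{-1}(k)) \]
admits a section. Surjectivity in positive degrees follows from this, since $\widetilde{C}$ and $C$ agree in positive degrees.

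The first step is to check that both $\tc^W(-;k)$ and $\widetilde{C}(-;\tc_{-1}(k))$ are colimit-preserving in the appropriate sense over $\cyclicorbit(C_p^r)$.
\begin{itemize}
\item For $\tc^W(-;k)$, this holds because it is the cofibre of a natural transformation between two functors whose cyclic assembly maps are equivalences. For $\tc(-;k)$ this is Corollary \ref{cyc-induct-tc}, using that everything is already $p$-complete by Remark \ref{p-comp-remark}. For $C_*(-;\tc(k))$ it is Proposition \ref{induct-theta}, or directly because $\colim_{\cyclicorbit(C_p^r)} BH \simeq BC_p^r$, together with the fact that $C_*$ commutes with colimits.
\item For $\widetilde{C}(-;\tc_{-1}(k))$, take the fibre of the map to the constant functor at $C(*;\tc_{-1}(k))$. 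The colimit of this constant functor over $\cyclicorbit(C_p^r)$ is $C(*;\tc_{-1}(k))$, because the orbit category has the terminal object $G/G$ only if $G$ is cyclic. So I will instead argue with the pushout square as before. Both $\tc^W(-;k)$ and $\widetilde{C}(-;\tc_{-1}(k))$ vanish on $G/e$, i.e.\ on the point. It therefore seems cleanest to phrase everything via the cofibre of $C(-;\tc_{-1}(k))$ relative to its value on the point, and use that colimits of the reduced part are computed as in Lemma \ref{red-cyc-functor}.
\end{itemize}

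The second step uses that both functors vanish on the point, so by Lemma \ref{red-cyc-functor} they come from $\cyclicorbit^*(C_p^r)$. By Lemma \ref{trivial-functor}, they are constant there, with values $\tc^W(BC_p;k)$ and $\widetilde{C}(BC_p;\tc_{-1}(k))$, and the natural transformation is constant as well. A section of the underlying map is provided by Lemma \ref{BC_p-split-surj}. Extending this section constantly gives a section of functors on $\cyclicorbit^*(C_p^r)$, and hence on $\cyclicorbit(C_p^r)$.

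The third step takes colimits over $\cyclicorbit(C_p^r)$. The colimit of the $\tc^W$-functor is $\tc^W(BC_p^r;k)$, by the argument of the first step. The colimit of the reduced chains functor is $\mathrm{cofib}\big(C(*;\tc_{-1}(k))^{\oplus?} \to C(BC_p^r;\tc_{-1}(k))\big)$, where the extra term comes from the point. I expect the main obstacle to be this bookkeeping: checking that the colimit of the reduced functor agrees with $C_*(BC_p^r;\tc_{-1}(k))$ in positive degrees, compatibly with the boundary map.

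I would handle this by comparing with the pushout square used in Proposition \ref{split-inj-ass}. The colimit of the functor $G/H \mapsto C(*;\tc_{-1}(k))$ is $C(|\cyclicorbit(C_p^r)|;\tc_{-1}(k))$. The discrepancy is therefore concentrated in the homology of the nerve $|\cyclicorbit(C_p^r)|$, which is a wedge of circles' worth of $BC_p^r$-data modulo free parts. I will check that it only contributes in degrees $\leq 0$ after the identification. Failing that, I would compute degreewise using the homotopy colimit spectral sequence, whose $E^2$ page is concentrated in columns $0$ and $1$ because the category has length one. A split surjection of diagrams then yields surjection on the abutment in positive degrees. This gives the surjections $\tc^W_n(BC_p^r;k) \to H_n(BC_p^r;\tc_{-1}(k))$ for $n>0$.
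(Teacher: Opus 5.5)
You follow the paper's route, with one step missing. Your first two steps match it. Since $\tc^W(*;k)\simeq 0$, the map factors through $\widetilde{C}(-;\tc_{-1}(k))$. Lemmas \ref{red-cyc-functor}, \ref{trivial-functor} and \ref{BC_p-split-surj} give a section as functors on $\cyclicorbit(C_p^r)$. And $\tc^W(-;k)$ satisfies cyclic assembly because $C_*(-;\tc(k))$ and $\tc(-;k)$ do.

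\textbf{The gap.} Taking colimits only gives a section of $\tc^W(BC_p^r;k)\to\colim_{\cyclicorbit(C_p^r)}\widetilde{C}(BH;\tc_{-1}(k))$. It does not give a section of the map to $\widetilde{C}(BC_p^r;\tc_{-1}(k))$. Write $M=\tc_{-1}(k)$ and $\mathcal{C}=\cyclicorbit(C_p^r)$. Taking colimits of $\widetilde{C}(-;M)\to C(-;M)\to C(*;M)$ gives a cofibre sequence
\[ \colim_{\mathcal{C}}\widetilde{C}(BH;M)\to C(BC_p^r;M)\to C(|\mathcal{C}|;M). \]
So surjectivity in degree $n$ is equivalent to the map $H_n(BC_p^r;M)\to H_n(|\mathcal{C}|;M)$ being zero.

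Your plan to show that the nerve only contributes in degrees $\le 0$ cannot work. The nerve $|\mathcal{C}|$ is the homotopy pushout of $BC_p^r\leftarrow\bigsqcup_H BC_p^r\to\bigsqcup_H B(C_p^r/H)$. Already for $r=2$, Mayer--Vietoris gives $H_2(|\mathcal{C}|;\mathbb{Z})\cong\ker\bigl(\bigoplus_{|H|=p}H\to C_p^2\bigr)\cong(\mathbb{Z}/p)^{p-1}$. Equivalently, the assembly map for $\widetilde{C}(-;M)$ has nonzero cofibre $\widetilde{C}(|\mathcal{C}|;M)$.

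Your spectral-sequence fallback does not help either. The colimit of a functor vanishing on $G/e$ is $\bigoplus_H F(G/H)_{h(C_p^r/H)}$, so the $E^2$ page is not confined to two columns. In any case that spectral sequence computes $\colim_{\mathcal{C}}\widetilde{C}(BH;M)$, not $\widetilde{C}(BC_p^r;M)$.

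\textbf{The missing ingredient} is Lemma \ref{group-homology-surj}: the assembly map $\colim_{\mathcal{C}}\widetilde{C}(BH;M)\to\widetilde{C}(BC_p^r;M)$ is surjective on homotopy. Concretely, the kernels of $H_n(BC_p^r)\to H_n(B(C_p^r/H))$, over the subgroups $H$ of order $p$, must span $H_n(BC_p^r)$. This is a genuine fact about elementary abelian groups, not bookkeeping. The paper proves it in three steps:
\begin{enumerate}
\item Dualise, using that all groups have exponent $p$.
\item Show that the images of inflation $H^*(B(C_p^r/H);\mathbb{F}_p)\to H^*(BC_p^r;\mathbb{F}_p)$ have zero intersection in positive degrees, which follows from the ring structure.
\item Pass to arbitrary coefficients $M$ by universal coefficients.
\end{enumerate}
Once you have this lemma, your section composed with this surjection gives the proposition. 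Without it, the argument never reaches $H_n(BC_p^r;\tc_{-1}(k))$.
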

\begin{proof}
    Observe that $\tc^W(*;k)\simeq 0$, so the map $\tc^W(-;k) \xrightarrow{}  C_*(-;\tc_{-1}(k))$ factors as 
    \[ \tc^W(-;k) \xrightarrow{} \widetilde{C}_*(-;\tc_{-1}(k)) \xrightarrow{} C_*(-;\tc_{-1}(k)). \]
    
    We first show that the initial map admits a section as a functor on $\cyclicorbit(C_p^r)$. Indeed, by Lemma \ref{red-cyc-functor} and Lemma \ref{trivial-functor}, it suffices to show that $\tc^W(BC_p;k) \xrightarrow{} \widetilde{C}_*(BC_p;\tc_{-1}(k))$ admits a section, which is Lemma \ref{BC_p-split-surj}.

    By Corollary \ref{cyc-induct-tc}, the cyclic assembly maps for $C_*(-;\tc(k))$ and $\tc(-;k)$ are both equivalences, so the cyclic assembly map for $\tc^W(-;k)$ is also an equivalence, 
    \[  \underset{\mathcal{O}_{\mathcal{F}_\mathrm{cyc}}(C_p^r)}{\colim} \ \tc^W(BH;k)\xrightarrow{\simeq}  \tc^W(BC_p^r;k). \]
    Hence the map
    \[ \tc^W(BC_p^r;k) \xrightarrow{}  \underset{\mathcal{O}_{\mathcal{F}_\mathrm{cyc}}(C_p^r)}{\colim} \ \widetilde{C}_*(BH;\tc_{-1}(k)) \]
    admits a section, and in particular is surjective on homotopy. It remains to show that the assembly map
    \[ \underset{\mathcal{O}_{\mathcal{F}_\mathrm{cyc}}(C_p^r)}{\colim} \ \widetilde{C}_*(BH;\tc_{-1}(k)) \xrightarrow{} \widetilde{C}(BC_p^r;\tc_{-1}(k)) \]
    is surjective on homotopy, which we prove as Lemma \ref{group-homology-surj} below.
    
\end{proof}

\begin{lemma}\label{group-homology-surj}
    For any abelian group $M$, the assembly map
    \[ \underset{\mathcal{O}_{\mathcal{F}_\mathrm{cyc}}(C_p^r)}{\colim} \ \widetilde{C}_*(BH;M) \xrightarrow{} \widetilde{C}_*(BC_p^r;M) \]
    is surjective on homotopy.
\end{lemma}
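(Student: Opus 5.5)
The colimit has an explicit form, so I would compute it directly. By Lemma \ref{red-cyc-functor} and Lemma \ref{trivial-functor}, the functor $\widetilde{C}_*(-;M)$ on $\cyclicorbit(C_p^r)$ vanishes on $G/e$ and is constant on the subcategory $\cyclicorbit^*(C_p^r)$. That subcategory is a disjoint union of $(p^r-1)/(p-1)$ objects $G/H$, each with automorphism group $G/H\cong C_p^{r-1}$ acting trivially. The colimit is therefore
\[ \bigoplus_{H} \widetilde{C}_*(BH;M)\otimes \mathbb{S}[B(G/H)]. \]
Its homotopy surjects onto $\bigoplus_H \widetilde{H}_*(BH;M)$ via the projection $B(G/H)\to *$. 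So it suffices to show that the sum of the images $\widetilde H_*(BH;M)\to \widetilde H_*(BC_p^r;M)$, over the order $p$ subgroups $H$, is everything. In fact this is stronger than needed, but it is the natural target. If it fails, I would use the extra classes coming from $H_*(B(G/H))$, since the composite $BH\times B(G/H)\to BG$ is an equivalence.

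\textbf{Correction.} This last observation shows that the assembly map restricted to a single summand is already the map induced by $B(H\times G/H)\simeq BG$. For that summand, fix a splitting $G=H\times K$. By naturality of assembly, the map on the $H$-summand is identified with
\[ \widetilde{C}_*(BH;M)\otimes \mathbb{S}[BK]\to \widetilde C_*(BG;M), \]
which is the cofibre of $C_*(BK;M)\to C_*(BH\times BK;M)$. On homotopy this is the part of the Künneth decomposition of $H_*(BH\times BK;M)$ that is not pulled back from $BK$. The main technical point I expect is checking that the functoriality in Lemma \ref{trivial-functor} matches this product splitting.

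With that in hand, surjectivity is immediate. Take two distinct order $p$ subgroups $H,H'$ and choose $K\supseteq H'$ a complement to $H$. Then $H'\leq K$, and the $H$-summand surjects onto
\[ \ker\big(\widetilde H_*(BG;M)\to \widetilde H_*(BK;M)\big)\quad\text{(with the splitting, the cokernel of } H_*(BK)\to H_*(BG)\text{)}. \]
I would argue by induction on $r$. The case $r=1$ is trivial. For the inductive step, write $\widetilde H_*(BG)=\widetilde H_*(BK)\oplus Q$, where $Q$ is the image of the $H$-summand. By induction, $\widetilde H_*(BK;M)$ is hit by the summands for subgroups $H'\le K$, because the assembly over $\cyclicorbit(K)$ maps compatibly into that over $\cyclicorbit(G)$ by induction along $K\le G$. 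Together these give surjectivity.

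The step I expect to need the most care is the compatibility of orbit-category functoriality with induction along $K\le G$ and with the identification of the $H$-summand. This is the concrete check that the colimit formula above is natural in the subgroup inclusion.
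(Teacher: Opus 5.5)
Your corrected argument is valid, and it differs from the paper's at the key spanning step.

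Both proofs show that the $H$-summand of the colimit has image exactly $\ker\big(\widetilde H_*(BC_p^r;M)\to\widetilde H_*(B(C_p^r/H);M)\big)$. The paper gets this by taking $C_p^r/H$-homotopy orbits of the equivariant cofibre sequence $\widetilde C_*(BH)\to C_*(BH)\to C_*(*)$, which yields $\widetilde C_*(BH)_{hC_p^r/H}\to C_*(BC_p^r)\to C_*(B(C_p^r/H))$. This never chooses a trivialization, so the compatibility with Lemma \ref{trivial-functor} that you flag never has to be checked.

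To see that these kernels span, the paper first takes $M=\mathbb{Z}$. It dualizes, using that everything has exponent $p$, and reduces to the claim that the images of $H^*(B(C_p^r/H))\to H^*(BC_p^r)$ meet trivially in positive degrees, which it reads off from the mod $p$ cohomology ring. It then passes to general $M$ by universal coefficients.

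Your induction on $r$ uses instead the decomposition $\widetilde H_*(BG;M)=\mathrm{im}\,\widetilde H_*(BK;M)\oplus\ker\big(\widetilde H_*(BG;M)\to\widetilde H_*(B(G/H);M)\big)$ for a complement $K$ of $H$. This is more elementary: it handles all $M$ at once and needs neither dualization nor the cohomology ring. It also shows that the summands for the $r$ factors of a decomposition $C_p^r=H_1\times\cdots\times H_r$ already suffice. It can even be run purely on homology. Since $K\to G\to G/H'$ factors through $K/H'$, the $K$-kernel for $H'\le K$ lands in the $G$-kernel for $H'$, so no spectrum-level naturality of assembly along $K\le G$ is needed.

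Two small remarks. Your first ``natural target'' is false, not just stronger than needed: $H_2(BC_p^2;\mathbb{Z})\cong\mathbb{Z}/p$ while $H_2(BC_p;\mathbb{Z})=0$. So the classes coming from $B(G/H)$ are essential. Also, the auxiliary subgroup $H'$ in your final paragraph plays no role.
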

\begin{proof}
     We first show the case $M=\mathbb{Z}$. Observe that since $\widetilde{C}_*(*) \simeq 0$, the colimit over $\mathcal{O}_{\mathcal{F}_\mathrm{cyc}}(C_p^r)$ evaluates as
    \[ \underset{\mathcal{O}_{\mathcal{F}_\mathrm{cyc}}(C_p^r)}{\colim} \ \widetilde{C}_*(BH) \simeq \bigoplus_{|H|=p}\widetilde{C}_*(BH)_{hC_p^r/H}.\]
    Now, by definition for each $H$ there is an (equivariant) cofibre sequence
    \[ \widetilde{C}_*(BH) \xrightarrow{} C_*(BH) \xrightarrow{} C_*(*) \]
    and so cofibre sequences
    \[ \widetilde{C}_*(BH)_{hC_p^r/H} \xrightarrow{} C_*(BC_p^r) \xrightarrow{} C_*(B(C_p^r/H)). \]
    The map $C_p^r \xrightarrow{} C_p^r/H$ is split, hence $C(BC_p^r) \xrightarrow{} C(B(C_p^r/H))$ is surjective on homotopy, and so we may identify the image of $\pi_k(\widetilde{C}_*(BH)_{hC_p^r/H}) \xrightarrow{} H_k(BC_p^r)$ with the kernel of $H_k(BC_p^r) \xrightarrow{} H_k(B(C_p^r/H))$.

    We must therefore show the following elementary fact: the span of the submodules $\mathrm{ker}(H_k(BC_p^r) \xrightarrow{} H_k(B(C_p^r/H)))$, for $H$ the cyclic subgroups, generates $H_k(BC_p^r)$. To see this, first note that all groups have exponent $p$, so we may instead show the $\mathbb{F}_p$-dual statement. That is, we will show that the intersection of the subspaces $\mathrm{im}(H_k(B(C_p^r/H))^\vee \xrightarrow{} H_k(BC_p^r)^\vee)$ is $0$, where $M^\vee = \mathrm{Hom}(M, \mathbb{F}_p)$.

    We have natural equivalences $H_{k-1}(BG)^\vee\cong \mathrm{Ext}(H_{k-1}(BG), \mathbb{Z})\cong H^k(BG)$, so we must show that the intersection of $\mathrm{im}(H^k(B(C_p^r/H)) \xrightarrow{} H^k(BC_p^r))$ is $0$. The integral cohomology of these groups injects into $\mathbb{F}_p$-cohomology, so it suffices to show the corresponding statement with $\mathbb{F}_p$-coefficients. But this is clear from the well known ring structure in this case.

    Now suppose $M$ is a general abelian group, and set
    \begin{align*}
        X &= \underset{\mathcal{O}_{\mathcal{F}_\mathrm{cyc}}(C_p^r)}{\colim} \ \widetilde{C}_*(BH;\mathbb{Z}) \\
        Y &= \widetilde{C}_*(BC_p^r;M)
    \end{align*}

    We must show that $X \otimes M \xrightarrow{} Y \otimes M$ is surjective on homotopy. From the integral case, $\pi_n(X) \xrightarrow{} \pi_n(Y)$ is surjective, and necessarily split since $\pi_n(X), \pi_n(Y)$ are $\mathbb{F}_p$-modules. Then the result for $X \otimes M \xrightarrow{} Y \otimes M$ follows from comparing the universal coefficient sequences.
\end{proof}

We now have all the necessary tools to analyse the long exact sequence in homotopy of the assembly map. The long exact sequence has the form:
\[ \ldots \xrightarrow{} H_n(BC_p^r;\mathbb{Z}_p) \oplus H_{n+1}(BC_p^r;\tc_{-1}(k)) \xrightarrow{} \tc_n(BC_p^r;k) \xrightarrow{} \tc_n^W(BC_p^r;k) \xrightarrow{} \ldots \]
By Proposition \ref{split-inj-ass}, $H_n(BC_p^r;\mathbb{Z}_p) \xrightarrow{} \tc_n(BC_p^r)$ is split injective. By Proposition \ref{bound-surj-ass}, the boundary map $\tc_n^W(BC_p^r;k) \xrightarrow{} H_n(BC_p^r;\tc_{-1}(k))$ is surjective. Hence for each $n>0$, the long exact sequence decomposes into short exact sequences
\[ 0 \xrightarrow{} H_n(BC_p^r;\mathbb{Z}_p) \xrightarrow{} \tc_n(BC_p^r;k) \xrightarrow{} \tc_n^W(BC_p^r;k) \xrightarrow{} H_n(BC_p^r;\tc_{-1}(k)) \xrightarrow{} 0 \]
in which the first map is split.

\begin{prop}\label{k-fin-ass-iso}
    Suppose that $k$ is finite, and $H \leq \mathrm{GL}_r(\mathbb{F}_p)$ has order prime to $p$. Then there is an equivalence of $H$-modules
    \[ \tc_n(BC_p^r;k) \cong \tc_n^W(BC_p^r;k) \]
    for $n>0$.
\end{prop}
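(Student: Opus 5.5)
From the four-term exact sequence just established, for each $n>0$ we have
\[ 0 \to H_n(BC_p^r;\mathbb{Z}_p) \to \tc_n(BC_p^r;k) \to \tc_n^W(BC_p^r;k) \to H_n(BC_p^r;\tc_{-1}(k)) \to 0. \]
The plan is to show that the two outer terms are isomorphic as $H$-modules. Then Maschke's theorem converts this sequence into an $H$-equivariant isomorphism between the middle terms.

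First we check $H$-equivariance. The group $H\leq \mathrm{GL}_r(\mathbb{F}_p)$ acts on $C_p^r$ by automorphisms, hence on $BC_p^r$. All four functors $C_*(-;\mathbb{Z}_p)$, $C_*(-;\tc_{-1}(k))$, $\tc(-;k)$ and $\tc^W(-;k)$ are functors on $\mathcal{S}$. The assembly map, the splitting $\tc(k)\simeq \mathbb{Z}_p\oplus \Sigma^{-1}\tc_{-1}(k)$ and the boundary map are natural transformations. So the sequence above is a sequence of $\mathbb{Z}_p[H]$-modules. One point needs care: the splitting of $\tc(k)$ is induced by the natural map $k\to\overline{k}$ on coefficients, so it commutes with any map of spaces.

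Next, use that $k$ is finite. In that case $\tc_{-1}(k)\cong \mathbb{Z}_p$: it is the cokernel of $\mathrm{Id}-\phi$ on $W(k)$, and as recalled earlier it is $\mathbb{Z}_p$ for finite $k$. The action on the coefficient $\tc_{-1}(k)$ is trivial, because $H$ acts only through the space variable. Hence
\[ H_n(BC_p^r;\tc_{-1}(k))\cong H_n(BC_p^r;\mathbb{Z}_p) \]
as $H$-modules, both sides carrying the action induced from the action on $BC_p^r$. I would verify this via the universal coefficient sequence, which is natural in the space, together with the fact that $\mathbb{Z}_p$ is flat.

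Finally, apply Maschke's theorem. All the groups involved are $\mathbb{F}_p$-vector spaces in positive degrees, since they have exponent $p$: $\tc_n(BC_p^r;k)$ is a $k$-vector space by the computation, and the homology groups are $p$-torsion. Since $p\nmid |H|$, the category of $\mathbb{F}_p[H]$-modules is semisimple, so every short exact sequence of $\mathbb{F}_p[H]$-modules splits equivariantly. Splitting the four-term sequence at the image of $\tc_n(BC_p^r;k)\to \tc^W_n(BC_p^r;k)$ gives $H$-equivariant isomorphisms
\[ \tc_n(BC_p^r;k)\cong H_n(BC_p^r;\mathbb{Z}_p)\oplus I,\qquad \tc^W_n(BC_p^r;k)\cong I\oplus H_n(BC_p^r;\tc_{-1}(k)), \]
where $I$ denotes that image. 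Combining these with the isomorphism of outer terms from the previous step gives the claim.

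The main obstacle I expect is the exponent-$p$ claim. We need $\tc_n^W(BC_p^r;k)$ to be $p$-torsion of exponent $p$, or at least some substitute for semisimplicity over $\mathbb{Z}_p[H]$. If exponent $p$ is not already known at this stage, I would argue as follows. For $p\nmid|H|$, the ring $\mathbb{Z}_p[H]$ is a product of matrix rings over unramified extensions, so finitely generated $\mathbb{Z}_p[H]$-modules are classified by their isotypic torsion invariants. In particular the sequence still gives $[\tc_n]+[H_n(\tc_{-1})]=[H_n(\mathbb{Z}_p)]+[\tc^W_n]$ in the appropriate Grothendieck group of finite-length modules, and this forces $\tc_n\cong\tc^W_n$ provided the modules are semisimple. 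Exponent $p$ can be obtained from the cyclic assembly colimit: it expresses $\tc^W$ as a finite colimit built from $\tc^W(BC_p;k)$, whose homotopy groups are $k$-vector spaces. If that fails, an alternative is to compare $\mathbb{F}_p[H]$-characters directly, for instance by taking Brauer characters of both sides.
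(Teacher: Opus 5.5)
Your route is the paper's. You identify both outer terms of the four-term sequence with $H_n(BC_p^r;\mathbb{Z}_p)$, using $\tc_{-1}(k)\cong\mathbb{Z}_p$ with trivial $H$-action, and then invoke semisimplicity of $\mathbb{F}_p[H]$. The paper phrases the last step as an equality of classes in $K_0(\mathbb{F}_p[H])$; you phrase it as splitting at the image $I$, which amounts to the same thing.

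There is one step that does not hold up as written: the claim that $\tc_n(BC_p^r;k)$ has exponent $p$ ``by the computation''.

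\begin{itemize}
\item For finite $k$, the computation of $\tc_n(BC_p^r;k)$ (Theorem~\ref{tc-el-al}) is deduced from this very proposition, so the appeal is circular.
\item Without it, you only know that $\tc_n(BC_p^r;k)$ is an extension of $I$ by $H_n(BC_p^r;\mathbb{Z}_p)$. Such an extension could a priori have exponent $p^2$, and then Maschke over $\mathbb{F}_p[H]$ does not apply to it.
\item Your final paragraph addresses the exponent of $\tc^W_n$, not of $\tc_n$, so it does not close this.
\end{itemize}

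The missing input is already part of the sequence you start from. The first map $H_n(BC_p^r;\mathbb{Z}_p)\to\tc_n(BC_p^r;k)$ is split (Proposition~\ref{split-inj-ass}). Hence $\tc_n(BC_p^r;k)\cong H_n(BC_p^r;\mathbb{Z}_p)\oplus I$ as abelian groups, with $I\subseteq\tc^W_n(BC_p^r;k)$. The splitting need not be equivariant, since it is only used to bound the exponent.

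Combine this with the fact that $\tc^W_n(BC_p^r;k)$ is a finite $\mathbb{F}_p$-module. This is Proposition~\ref{tcw-calc}, proved by cyclic assembly as you suggest and independently of the present statement. Then every term is an $\mathbb{F}_p[H]$-module, and your Maschke argument goes through unchanged.
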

\begin{proof}
    In this case, $\tc_{-1}(k) \cong \mathbb{Z}_p$, so that there is an exact sequence of $H$-modules
    \[ 0 \xrightarrow{} H_n(BC_p^r;\mathbb{Z}_p) \xrightarrow{} \tc_n(BC_p^r;k) \xrightarrow{} \tc_n^W(BC_p^r;k) \xrightarrow{} H_n(BC_p^r;\mathbb{Z}_p) \xrightarrow{} 0 \]
    We show in Proposition \ref{tcw-calc} below that $\tc_n^W(BC_p^r;k)$ is a finite $\mathbb{F}_p$-module. The finite groups $H_n(BC_p^r;\mathbb{Z}_p)$ have exponent $p$, and the initial map is split, so the above sequence is a sequence of finite $\mathbb{F}_p[H]$-modules. By semi-simplicity, a $\mathbb{F}_p[H]$-module is determined by its class in $K_0$, and we have that
    \[ [\tc_n(BC_p^r;k)] = [ H_n(BC_p^r;\mathbb{Z}_p)] + [\tc_n^W(BC_p^r;k)] - [H_n(BC_p^r;\mathbb{Z}_p)] = [\tc_n^W(BC_p^r;k)] \in K_0(\mathbb{F}_p[H]) \]
    so $\tc_n(BC_p^r;k) \cong \tc_n^W(BC_p^r;k)$.
\end{proof}

\subsection{Conclusion}
We conclude by calculating the groups $\tc^W_n(BC_p^r;k)$, and describing the action of certain subgroups $H \leq \mathrm{GL}_r(\mathbb{F}_p)$.

\begin{prop}\label{tcw-calc}
    For $k$ a perfect field of characteristic $p$, 
    \[ \tc_i^W(BC_p^r;k) \cong k^{n_i} \]
    where the $n_i$ are given by the Hilbert series
    \[ (p^r-1)(1+x+x^2+\dots)^{r-1}(x+x^3+x^5+\dots). \]
\end{prop}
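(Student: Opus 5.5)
Since $\tc^W(-;k)$ vanishes on the point, I would compute it with cyclic assembly. The cyclic assembly maps for $C_*(-;\tc(k))$ and $\tc(-;k)$ are equivalences (Corollary \ref{cyc-induct-tc}, together with the evident assembly for homology), so the cofibre sequence gives an equivalence
\[ \underset{\mathcal{O}_{\mathcal{F}_\mathrm{cyc}}(C_p^r)}{\colim} \ \tc^W(BH;k)\xrightarrow{\simeq}  \tc^W(BC_p^r;k), \]
as was already observed in the proof of Proposition \ref{bound-surj-ass}.

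Next I would evaluate the colimit. Because $\tc^W(-;k)$ is reduced, Lemma \ref{red-cyc-functor} lets me discard the object $G/e$, and the remaining category $\cyclicorbit^*(C_p^r)$ is a disjoint union of the groupoids $B(C_p^r/H)$, one for each of the $(p^r-1)/(p-1)$ subgroups $H$ of order $p$. By Lemma \ref{trivial-functor}, the functor on each of these components is constant with value $\tc^W(BC_p;k)$ and trivial $C_p^r/H\cong C_p^{r-1}$-action. This gives
\[ \tc^W(BC_p^r;k)\simeq \bigoplus_{|H|=p} \tc^W(BC_p;k)_{hC_p^{r-1}} \simeq \bigoplus_{|H|=p} C_*(BC_p^{r-1};\tc^W(BC_p;k)). \]
Here I use that homotopy orbits for a trivial action are chains on the classifying space with coefficients in the module spectrum.

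For the final computation, recall that $\tc^W(BC_p;k)$ is a $\mathbb{Z}_p$-module whose positive-degree homotopy is $k^{n(p-1)}$ in degree $2n-1$, with $\mathbb{Z}_p$ in degree $0$. This degree-$0$ summand is the main obstacle: it would contribute $H_*(BC_p^{r-1};\mathbb{Z}_p)$, which is $p$-torsion but not a $k$-vector space. I would first show that this class does not actually occur in the reduced theory, that is, that the $\pi_0$ of $\tc^W(BC_p;k)$ relevant here vanishes. One route is that $\pi_0$ assembly is already an isomorphism on $\tc_0$ for $p$-groups by Corollary \ref{tc-leq0-cor}. Alternatively, I would verify this against the long exact sequence once the short exact sequences have been established. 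Granting this, the spectrum is a $\mathbb{Z}_p$-module whose homotopy consists of $k$-vector spaces (these are $\mathbb{F}_p$-modules), so it splits as a sum of shifted Eilenberg--MacLane spectra of $\mathbb{F}_p$-vector spaces. Hence
\[ \pi_*\big(C_*(BC_p^{r-1};\tc^W(BC_p;k))\big)\cong H_*(BC_p^{r-1};\mathbb{F}_p)\otimes_{\mathbb{F}_p}\pi_*\tc^W(BC_p;k). \]
The mod $p$ homology of $BC_p^{r-1}$ has Poincaré series $(1+x+x^2+\dots)^{r-1}$, and the positive part of $\pi_*\tc^W(BC_p;k)$ has series $(p-1)(x+x^3+\dots)$ in units of $k$-dimension. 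Multiplying these and summing over the $(p^r-1)/(p-1)$ subgroups gives $(p^r-1)(1+x+\dots)^{r-1}(x+x^3+\dots)$, which is the claimed formula.
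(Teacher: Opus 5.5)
Your route is the paper's: cyclic assembly for $\tc^W$, discarding $G/e$ by Lemma \ref{red-cyc-functor}, using Lemma \ref{trivial-functor} to get $\bigoplus_{|H|=p}\mathbb{S}[BC_p^{r-1}]\otimes\tc^W(BC_p;k)$, then splitting the $\mathbb{Z}_p$-module into Eilenberg--MacLane pieces.

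Your worry about degree $0$ is justified, and the paper passes over it. In fact $\tc^W_0(BC_p;k)=0$. Take $T=C_p\setminus\{e\}$; then $\tc^W(BC_p;k)\simeq\big(\Sigma(\thh(k)\otimes\mathbb{S}[L_TBC_p])_{hS^1}\big)^\wedge_p$, as in the proof of Proposition \ref{free-action}, and this is $1$-connective. The $\mathbb{Z}_p$ listed in degree $0$ belongs to $\tc_*$, not $\tc^W_*$.

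Be careful with the justification you cite, though. The assembly map is not an isomorphism on $\pi_0$ in general: its source has $\pi_0=\mathbb{Z}_p\oplus\tc_{-1}(k)/p$, while $\tc_0(BC_p;k)=\mathbb{Z}_p$. What you need is surjectivity on $\pi_0$ and injectivity on $\pi_{-1}$, and $1$-connectivity of the cofibre gives exactly that.

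The real problem is your last step. A graded $k$-vector space with $k^{n(p-1)}$ in degree $2n-1$ has series $(p-1)\sum_{n\ge1}nx^{2n-1}=(p-1)(x+2x^3+3x^5+\cdots)$, not $(p-1)(x+x^3+x^5+\cdots)$. You quote the first data and then use the second series. Carried out correctly, your argument gives
\[ (p^r-1)(1+x+x^2+\cdots)^{r-1}(x+2x^3+3x^5+\cdots), \]
which differs from the claimed series from degree $3$ on (in degree $3$ by $p^r-1$).

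The claimed series cannot be obtained from these inputs by any correct argument. At $r=1$ the statement asserts $\tc^W_3(BC_p;k)\cong k^{p-1}$. The value recalled earlier in the paper, consistent with Hesselholt--Madsen's $\tc_3(k[x]/(x^p);p)\cong k^{2(p-1)}$, is $k^{2(p-1)}$.

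The paper's own proof ends with the same unchecked ``which gives the stated formula'', so the error lies in the statement, not in anything your approach misses. Still, your final identification of power series is false as written. What your argument actually proves is the displayed formula with $x+2x^3+3x^5+\cdots$ in place of $x+x^3+x^5+\cdots$.
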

\begin{proof}
    As in the proof of Proposition \ref{bound-surj-ass}, the cyclic assembly map is an equivalence
    \[  \underset{\mathcal{O}_{\mathcal{F}_\mathrm{cyc}}(C_p^r)}{\colim} \ \tc^W(BH;k)\xrightarrow{\simeq}  \tc^W(BC_p^r;k). \]
    Now by Lemma \ref{red-cyc-functor} and Lemma \ref{trivial-functor}, $\tc^W(-;k)$ is constant as a functor on $\cyclicorbit^*(C_p^r)$, so we find that
    \[  \underset{\mathcal{O}_{\mathcal{F}_\mathrm{cyc}}(C_p^r)}{\colim} \ \tc^W(BH;k) \simeq  \underset{\mathcal{O}^*_{\mathcal{F}_\mathrm{cyc}}(C_p^r)}{\colim} \ \tc^W(BH;k) \simeq \bigoplus_{|H|=p}\mathbb{S}[BC_p^{r-1}]\otimes \tc^W(BC_p;k).  \]
    Now by observation of the homotopy groups (and using that $\mathbb{Z}_p$-modules split as the sum of their homotopy groups) $\tc^W(BC_p;k)$ is a $k$-module, so that 
    \[\tc^W(BC_p^r;k) \simeq \bigoplus_{|H|=p}k[BC_p^{r-1}]\otimes_k \tc^W(BC_p;k) \]
    which gives the stated formula.
\end{proof}

The previous proposition shows in particular that the groups $\tc_i^W(BC_p^r;k)$ are $p$-torsion, so have the structure of $\mathbb{F}_p[\mathrm{Out}(C_p^r)]\cong \mathbb{F}_p[\mathrm{GL}_r(\mathbb{F}_p)]$ modules, which are finite dimensional when $k$ is finite. While we cannot describe the structure explicitly, we show that upon restricting to certain subgroups these representations become free. The class of subgroups we consider are those $H \leq \mathrm{GL}_r(\mathbb{F}_p)$ with the property that $H$ acts freely on $\mathbb{F}_p^r \ \backslash \ \{0\}$. The most notable example is the Singer cycle $C_{p^r-1} \cong \mathbb{F}_{p^r}^\times \leq  \mathrm{GL}_r(\mathbb{F}_p)$, but there are also non-cyclic examples, such as the irreducible $2$-dimensional representation $Q_8 \leq \mathrm{GL}_2(\mathbb{F}_3)$.

\begin{prop}\label{free-action}
    Let $k$ be finite. Suppose that $H \leq \mathrm{GL}_r(\mathbb{F}_p)$ acts freely on $\mathbb{F}_p^r \ \backslash \ \{0\}$. Then the action of $H$ on $\tc_i^W(BC_p^r;k)$ is free.
\end{prop}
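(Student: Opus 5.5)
The plan is to use the cyclic assembly description of $\tc^W(BC_p^r;k)$ from the proof of Proposition \ref{tcw-calc}, and to track how $\mathrm{GL}_r(\mathbb{F}_p)$ acts on it through its action on the orbit category $\cyclicorbit(C_p^r)$. An automorphism $\alpha \in \mathrm{GL}_r(\mathbb{F}_p)$ of $G=C_p^r$ acts on $BG$, and the cyclic assembly equivalence
\[ \underset{\mathcal{O}_{\mathcal{F}_\mathrm{cyc}}(C_p^r)}{\colim} \ \tc^W(BH;k)\xrightarrow{\simeq}  \tc^W(BC_p^r;k) \]
is natural in $\alpha$, where $\alpha$ acts on the source by permuting the orbits $G/H \mapsto G/\alpha(H)$. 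By Lemma \ref{red-cyc-functor} the colimit reduces to $\cyclicorbit^*(C_p^r)$, which is a disjoint union over the lines $H \le \mathbb{F}_p^r$ of the groupoids $B(G/H)$. The colimit therefore splits as
\[ \bigoplus_{|H|=p} \tc^W(BH;k)_{h(G/H)}. \]
I will first check that $H$ (the acting subgroup, which I rename $\Gamma$ to avoid the clash) permutes these summands compatibly with this splitting. I will then reduce freeness to a statement about the stabiliser of a single summand.

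The key steps are as follows.

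\begin{enumerate}
\item Since $\Gamma$ acts freely on $\mathbb{F}_p^r\setminus\{0\}$, its order is prime to $p$ (indeed $|\Gamma|$ divides $p^r-1$). Hence Maschke applies, and $\mathbb{F}_p[\Gamma]$-modules are determined by their characters, or equivalently by their restrictions to cyclic subgroups of prime order.

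\item Let $\Gamma_L$ be the stabiliser of a line $L$. Then $\Gamma$ permutes the lines, and $\tc^W_i(BC_p^r;k)$ is, as a $\Gamma$-module,
\[ \bigoplus_{\text{orbits } [L]} \mathrm{Ind}_{\Gamma_L}^{\Gamma}\, \pi_i\big(\tc^W(BL;k)_{h(G/L)}\big). \]
Here I use the $\mathbb{Z}_p$-module splitting, or simply the fact that these are direct sums of summands permuted by $\Gamma$.

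\item Freeness of $\Gamma$ on nonzero vectors forces $\Gamma_L$ to act faithfully on $L\cong \mathbb{F}_p$ through $\mathbb{F}_p^\times$. In particular $\Gamma_L$ is cyclic of order dividing $p-1$, acting on $L$ by a faithful character.

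\item An induced module from a free $\mathbb{F}_p[\Gamma_L]$-module is free. It therefore suffices to show that $\Gamma_L$ acts freely on
\[ \pi_i\big(\tc^W(BL;k)_{h(G/L)}\big)\cong \pi_i\big(\mathbb{S}[B(G/L)]\otimes \tc^W(BL;k)\big). \]
The action of $\Gamma_L$ here is the diagonal action on $B(G/L)$ and on $\tc^W(BL;k)$, the latter through $\mathrm{Aut}(C_p)=\mathbb{F}_p^\times$.

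\item Since $\tc^W(BL;k)$ is a $k$-module with homotopy concentrated in odd degrees (with $\mathbb{Z}_p$ in degree $0$), this group is a Künneth sum of terms $H_a(B(G/L);\mathbb{F}_p)\otimes \tc^W_b(BC_p;k)$. It is then enough to show that $\tc^W_b(BC_p;k)$ for $b>0$ is a free $\mathbb{F}_p[\mathbb{F}_p^\times]$-module, hence free over any subgroup. Tensoring any module with a free module gives a free module, so the diagonal action is then free as well.

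\item Finally, I will show that $\tc^W_{2n-1}(BC_p;k)\cong k^{n(p-1)}$ is free over $\mathbb{F}_p[\mathbb{F}_p^\times]$. Via Proposition \ref{k-fin-ass-iso} for $r=1$ (which holds for the finite field $k$), it is equivalent to check this for $\tc_{2n-1}(k[x]/(x^p);p)$, where $\mathbb{F}_p^\times$ acts by $x\mapsto (1+x)^a-1$, coming from $g\mapsto g^a$.
\end{enumerate}

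The main obstacle is step 6. My plan there is to use the Hesselholt--Madsen/Speirs description of $\tc(k[x]/(x^p))$. That description has graded pieces of the form $\bigoplus_{p\nmid m} W_{s}(k)$, indexed by weights $m$, and the dimension $n(p-1)$ suggests one $k$ per residue class of $m$ modulo $p$ among the nonzero residues, in each of $n$ blocks. The unit $a\in\mathbb{F}_p^\times$ should act on the weight-$m$ summand by multiplication by the Teichmüller lift of $a^m$ on the associated graded for the $x$-adic (weight) filtration. Residues $m\not\equiv 0 \pmod p$ run over all nonzero classes, so each block of $p-1$ copies of $k$ is, up to filtration, $k\otimes_{\mathbb{F}_p}\mathbb{F}_p[\mathbb{F}_p^\times]$. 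Since $|\mathbb{F}_p^\times|$ is prime to $p$, freeness can be detected on the associated graded: an extension of free modules splits, and the module is determined by its $K_0$ class. This is the step I expect to require care in matching weights with the action.
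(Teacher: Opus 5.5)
Your steps 1--5 reduce the statement to step 6, and step 6 is not a proof. The automorphism $x\mapsto(1+x)^a-1$ is not homogeneous, so it does not preserve the weight decomposition $\thh(k[x]/(x^p))\simeq\bigoplus_m\thh(k)\otimes\mathbb{S}[B^{\mathrm{cy}}(\Pi_p;m)]$ on which the Hesselholt--Madsen/Speirs computation rests. To make ``$a$ acts on weight $m$ by $[a]^m$ on the associated graded'' precise, you would need an $\mathbb{F}_p^\times$-stable filtration on $\tc(k[x]/(x^p);p)$. You would also need to control the resulting spectral sequence, including the weight-$m$ pieces with $p\mid m$, before a character count means anything. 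None of this is carried out, so the proposal does not establish freeness even for $r=1$.

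The missing idea is to split by \emph{elements} rather than by lines. Every $g\neq e$ in $C_p^r$ satisfies $g^p=e$, so the $p$-Frobenius lift sends each component $L_{\{g\}}BC_p^r$ with $g\neq e$ into the constant loops $BC_p^r$. Identify $\tc(\thh(k)\otimes\mathbb{S}[BC_p^r];p)\to\tc(BC_p^r;k)$ with the assembly map. Then Lemma \ref{filtration-thh-rg} and Proposition \ref{tc-ss} give, with $T=C_p^r\setminus\{e\}$,
\[ \tc^W(BC_p^r;k)\simeq\big(\Sigma(\thh(k)\otimes\mathbb{S}[L_TBC_p^r])_{hS^1}\big)^\wedge_p\simeq\bigoplus_{g\neq e}\big(\Sigma(\thh(k)\otimes\mathbb{S}[L_{\{g\}}BC_p^r])_{hS^1}\big)^\wedge_p, \]
and $\phi\in H$ carries the $g$-summand to the $\phi(g)$-summand. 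Since $H$ acts freely on $T$, $\tc^W_i$ is induced from the trivial subgroup, hence free. This is the paper's proof. It needs no cyclic assembly, no line stabilisers, and no knowledge of $\tc(k[x]/(x^p))$. For $r=1$ it is exactly your step 6, since $\mathbb{F}_p^\times$ permutes the $p-1$ components simply transitively. With it your argument closes, but the detour through lines becomes superfluous.

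Two smaller points. First, in step 5 you list a $\mathbb{Z}_p$ in degree $0$ of $\tc^W(BL;k)$ but then only treat $b>0$. Were that term present, the K\"unneth summand $H_i(B(G/L);\mathbb{Z}_p)$, on which $\Gamma_L$ acts through $G/L$, would generally not be free. For example, for the Singer cycle with $r=2$, $\Gamma_L=\mathbb{F}_p^\times$ acts on $H_1=\mathbb{F}_p$ by a character. Your argument needs $\tc^W_0(BC_p;k)=0$; this does hold, because the display above shows $\tc^W$ is $1$-connective. Second, the diagonal action in step 4 relies on trivialising the $G/L$-action on $BL$ via a splitting $G=L\times C$, and you need $C$ to be $\Gamma_L$-stable; Maschke's theorem provides such a $C$.
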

\begin{proof}
    Let $T$ be the set of non-trivial elements of $C_p^r$. The free loop space $LBC_p^r$ decomposes (as a space with $S^1$-action)
    \[ LBC_p^r \simeq BC_p^r \bigsqcup L_TBC_p^r \]
    in the notation of Definition \ref{LTBG-def}. Recall that, since $\{1\}$ is closed under $x \mapsto x^p$, $BC_p^r$ is again a space with Frobenius lifts, and there is a map of cyclotomic spectra
    \[ \thh(k) \otimes \mathbb{S}[BC_p^r] \xrightarrow{}  \thh(k) \otimes \mathbb{S}[LBC_p^r] \]
    and so a map 
    \[ \tc \big(\thh(k) \otimes \mathbb{S}[BC_p^r];p \big)\xrightarrow{}  \tc \big( \thh(k) \otimes \mathbb{S}[LBC_p^r];p \big)=\tc(BC_p^r;k).\]
    Moreover, using again that by \cite[Theorem 2.7]{clausen2021}, the functor $\tc(-;p)$ commutes with colimits of connective cyclotomic spectra, the above map can be identified with the assembly map. Now by Proposition \ref{tc-ss}, we have an equivariant identification
    \[ \tc^W(BC_p^r;k) \simeq \big(\Sigma (\thh(k) \otimes \mathbb{S}[L_TBC_p^r])_{hS^1}\big)^\wedge_p.  \]
    As a space with $S^1$-action, we have that
    \[ L_TBC_p^r \simeq \bigsqcup_{g \neq e} L_{\{g\}}BC_p^r  \]
    and by assumption, the action of $H$ permutes the components freely.
    Hence there is a decomposition of $\mathbb{F}_p$-modules (finite dimensional by \ref{tcw-calc})
    \[ \tc_i^W(BC_p^r;k) \cong \bigoplus_{g \neq e}\pi_i\big(\Sigma (\thh(k) \otimes \mathbb{S}[L_{\{g\}}BC_p^r])_{hS^1}\big)^\wedge_p \]
    with the following property: for $\phi \in H$, and $g \neq e$, the summand corresponding to $g$ is mapped to the summand corresponding to $\phi(g)$. But this property implies that the $H$-representation is induced from the trivial subgroup, hence free.
    
\end{proof}

We may now finish our calculation of $\tc_i(k[C_p^r];p)$.

\begin{theorem}\label{tc-el-al}
    Let $k$ be a perfect field of characteristic $p$. For $i>0$,
    \[\tc_i(k[C_p^r];p) = \tc_i(BC_p^r;k) \cong k^{n_i} \]
    where the dimensions $n_i$ are given by the coefficients in the Hilbert series
    \[ (p^r-1)(1+x+x^2+\dots)^{r-1}(x+x^3+x^5+\dots). \]

    Moreover, if $k$ is finite and $H \leq \mathrm{GL}_r(\mathbb{F}_p)$ acts freely on $\mathbb{F}_p^r \ \backslash \ \{0\}$, then the action of $H$ on $\tc_i(BC_p^r;k)$ is free.
\end{theorem}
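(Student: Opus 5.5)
The plan is to read off the theorem from the four-term exact sequences established just before Proposition \ref{k-fin-ass-iso}, together with the computation of $\tc^W$ in Proposition \ref{tcw-calc} and the freeness result Proposition \ref{free-action}. For each $n>0$ we have
\[ 0 \to H_n(BC_p^r;\mathbb{Z}_p) \to \tc_n(BC_p^r;k) \to \tc_n^W(BC_p^r;k) \to H_n(BC_p^r;\tc_{-1}(k)) \to 0, \]
with the first map split. The statement is only an abstract isomorphism of abelian groups $\tc_n \cong k^{n_n}$. I will therefore argue at the level of $\mathbb{F}_p$-vector spaces and their dimensions, possibly infinite cardinals.

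\textbf{Step 1: $\tc_n(BC_p^r;k)$ is an $\mathbb{F}_p$-vector space for $n>0$.} Let $K_n$ be the kernel of $\tc_n^W \to H_n(BC_p^r;\tc_{-1}(k))$.
\begin{itemize}
\item By Proposition \ref{tcw-calc}, $\tc_n^W$ is a $k$-vector space, so it has exponent $p$, and hence so does $K_n$.
\item Since the first map is split injective, $\tc_n(BC_p^r;k) \cong H_n(BC_p^r;\mathbb{Z}_p) \oplus K_n$.
\item Both summands have exponent $p$ (since $n>0$), so $\tc_n$ is an $\mathbb{F}_p$-vector space.
\end{itemize}
It therefore suffices to show $\dim_{\mathbb{F}_p}\tc_n = \dim_{\mathbb{F}_p} k^{n_n}$.

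\textbf{Step 2: the dimension count.}
\begin{itemize}
\item \emph{$k$ finite.} Here $\tc_{-1}(k)\cong\mathbb{Z}_p$, and all four terms are finite. Additivity of dimension in the exact sequence gives $\dim \tc_n = \dim \tc_n^W = n_n\cdot[k:\mathbb{F}_p]$. Alternatively, one can quote Proposition \ref{k-fin-ass-iso} with $H$ trivial.
\item \emph{$k$ infinite and $n_n>0$.} Then $\dim_{\mathbb{F}_p}\tc_n^W = |k|$. The group $H_n(BC_p^r;\mathbb{Z}_p)$ is finite. The group $H_n(BC_p^r;\tc_{-1}(k))$ has cardinality at most $|W(k)| = |k|^{\aleph_0}$. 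Here lies the main obstacle, and I would handle it with care: bound it instead by $|\tc_{-1}(k)/p|$, which is a quotient of $W(k)/p\cong k$, so it has cardinality at most $|k|$. By universal coefficients, $H_n(BC_p^r;M)$ only depends on $M/p$ and $M[p]$, and $\tc_{-1}(k)$ is torsion free. The cokernel thus has cardinality at most $|k|$. The kernel $K_n$ of a surjection from a vector space of dimension $|k|$ onto a space of dimension at most $|k|$ need not a priori be large. To fix this, I would instead use the section constructed in Proposition \ref{bound-surj-ass}: the target is a direct summand of $\tc_n^W$, so $\tc_n^W\cong K_n\oplus H_n(BC_p^r;\tc_{-1}(k))$. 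The desired conclusion is $\dim K_n=|k|$. This does not follow from cardinal arithmetic alone, since $|k| = \max(\dim K_n, \dim H_n(BC_p^r;\tc_{-1}(k)))$ only shows one of the two is $|k|$. I therefore expect to need a finer comparison, for instance splitting $\tc^W$ summandwise over the $(p^r-1)/(p-1)$ lines $H$ as in Proposition \ref{tcw-calc}, where the $\tc_{-1}(k)$-part is visibly a small fraction of each summand $k[BC_p^{r-1}]\otimes_k\tc^W(BC_p;k)$. Granting $\dim K_n=|k|$, we get $\dim \tc_n = |k| = \dim k^{n_n}$.
\item \emph{$n_n=0$.} This only happens for $r=1$ and $n$ even. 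Then $\tc_n^W=0$, so $K_n=0$, and $H_n(BC_p;\mathbb{Z}_p)=0$ in positive even degrees, giving $\tc_n=0$.
\end{itemize}

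\textbf{Step 3: the freeness statement.} Let $k$ be finite and let $H\le \mathrm{GL}_r(\mathbb{F}_p)$ act freely on $\mathbb{F}_p^r\setminus\{0\}$. Then $|H|$ divides $p^r-1$, so $|H|$ is prime to $p$. Proposition \ref{k-fin-ass-iso} gives an $H$-equivariant isomorphism $\tc_n(BC_p^r;k)\cong\tc_n^W(BC_p^r;k)$, and Proposition \ref{free-action} says the right-hand side is a free $H$-module. Hence $\tc_n(BC_p^r;k)$ is free as well.
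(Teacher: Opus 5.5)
Your Step 1, the finite-field count in Step 2, and Step 3 follow the paper's argument, which uses Propositions \ref{k-fin-ass-iso} and \ref{free-action}. The gap is the infinite-field case. You correctly reduce to showing that
\[ K_n=\ker\bigl(\tc_n^W(BC_p^r;k)\to H_n(BC_p^r;\tc_{-1}(k))\bigr) \]
has dimension $|k|$. You also correctly note that cardinal arithmetic fails when $|\tc_{-1}(k)/p|=|k|$. But you then write ``granting $\dim K_n=|k|$'', and that is the whole content of this case. The heuristic that the $\tc_{-1}(k)$-part is ``a small fraction of each summand'' is not an argument. Between vector spaces of dimension $|k|$, proportions carry no information, and a surjection can have a kernel of any dimension.

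The paper treats $r>1$ (for $r=1$ it uses the known computation) by splitting on the size of $|\tc_{-1}(k)/p|$.
\begin{itemize}
\item If $|\tc_{-1}(k)/p|<|k|$, your count works, since $H_n(BC_p^r;\tc_{-1}(k))\cong H_n(BC_p^r;\mathbb{Z})\otimes\tc_{-1}(k)/p$.
\item If $|\tc_{-1}(k)/p|\ge|k|$, it uses the factorisation from the proof of Proposition \ref{bound-surj-ass},
\[ \tc_n^W(BC_p^r;k)\twoheadrightarrow\bigoplus_{|H|=p}K_H\twoheadrightarrow H_n(BC_p^r;\tc_{-1}(k)),\qquad K_H=\ker\bigl(H_n(BC_p^r;\tc_{-1}(k))\to H_n(B(C_p^r/H);\tc_{-1}(k))\bigr), \]
where the second map is the sum of the inclusions.
\end{itemize}
The missing idea is that this sum over lines is \emph{redundant}. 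It is $(-)\otimes_{\mathbb{F}_p}\tc_{-1}(k)/p$ applied to a non-injective map of finite-dimensional $\mathbb{F}_p$-spaces. For $n=1$ this map is $\bigoplus_H H\to C_p^r$, with $(p^r-1)/(p-1)>r$ summands. For $n\ge2$, already two of the $K_H$ meet nontrivially, as one checks in a rank-two retract. So its kernel has cardinality at least $|\tc_{-1}(k)/p|\ge|k|$, and $K_n$ contains the preimage of that kernel.

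Your summandwise idea can also be completed, by a different mechanism. Let $F$ be the fibre of $\tc^W(BC_p;k)\to\widetilde{C}(BC_p;\tc_{-1}(k))$. By Lemma \ref{BC_p-split-surj} and the $r=1$ sequence, $\pi_bF\cong\tc_b(BC_p;k)/H_b(BC_p;\mathbb{Z}_p)$, which has dimension $|k|$ for odd $b>0$. The kernel of the first surjection above is $\bigoplus_{|H|=p}\pi_n(\mathbb{S}[BC_p^{r-1}]\otimes F)$. This contains $\pi_nF$ for $n$ odd and $H_1(BC_p^{r-1};\pi_{n-1}F)$ for $n$ even, both of dimension $|k|$ when $r\ge2$. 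Either way, this step has to be supplied.

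Separately, and this affects the paper equally: the series you import from Proposition \ref{tcw-calc} is inconsistent with the paper's own input $\tc_{2m-1}(BC_p;k)\cong\tc^W_{2m-1}(BC_p;k)\cong k^{m(p-1)}$. For $r=1$ it predicts $k^{p-1}$ in degree $3$ rather than $k^{2(p-1)}$. Summing $k[BC_p^{r-1}]\otimes_k\tc^W(BC_p;k)$ over the lines actually gives
\[ (p^r-1)(1+x+x^2+\cdots)^{r-1}(x+2x^3+3x^5+\cdots). \]
So your finite-field identity $\dim\tc_n=\dim\tc_n^W$ is fine, but the closed formula it feeds into needs correcting.
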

\begin{proof}
    We may assume $r>1$. Recall the exact sequence
    \[ 0 \xrightarrow{} H_n(BC_p^r;\mathbb{Z}_p) \xrightarrow{} \tc_n(BC_p^r;k) \xrightarrow{} \tc_n^W(BC_p^r;k) \xrightarrow{} H_n(BC_p^r;\tc_{-1}(k)) \xrightarrow{} 0 \]
    where the map $H_n(BC_p^r;\mathbb{Z}_p) \xrightarrow{} \tc_n(BC_p^r;k)$ is split.

    Suppose first that $k$ is finite, and that $H$ is as in the statement. The condition on $H$ implies that the order of $H$ is prime to $p$, so Proposition \ref{k-fin-ass-iso} states that there is an equivalence of $\mathbb{F}_p[H]$-modules $\tc_i(BC_p^r;k) \cong \tc^W_i(BC_p^r;k)$. The latter has the required dimension formula, and is free by Proposition \ref{free-action}.

    Finally, suppose that $k$ is infinite. By the exact sequence, we have that $\tc_i(BC_p^r;k) \cong H_i(BC_p^r;\mathbb{Z}_p) \oplus M$, where $M$ is a submodule of $\tc^W_i(BC_p^r;k)$. Hence $\tc_i(BC_p^r;k)$ is a $\mathbb{F}_p$-module with cardinality at most that of $k$. We will show that the cardinality is at least that of $k$, which will prove the statement, since infinite dimensional $\mathbb{F}_p$-modules are classified by their cardinality. 
    
    If the cardinality of $\tc_{-1}(k)/p$ is strictly less than the cardinality of $k$, then $M$ must have cardinality at least that of $k$, by considering the short exact sequence
    \[ 0 \xrightarrow{} M \xrightarrow{} \tc_i^W(BC_p^r;k)\cong k^{n_i} \xrightarrow{} H_n(BC_p^r;\tc_{-1}(k)) \xrightarrow{} 0. \]
    
    Suppose that the cardinality of $\tc_{-1}(k)/p$ is at least that of $k$. By the proof of Lemma \ref{bound-surj-ass}, the map $\tc_n^W(BC_p^r;k) \xrightarrow{} H_n(BC_p^r;\tc_{-1}(k))$ factors via surjections
    \[ \tc_n^W(BC_p^r;k) \twoheadrightarrow{} \bigoplus_{|H|=p} K_H\twoheadrightarrow H_n(BC_p^r;\tc_{-1}(k)) \]
    where $K_H=\mathrm{ker}\big(H_n(BC_p^r;\tc_{-1}(k))\xrightarrow{} H_n(BC_p^r/H;\tc_{-1}(k))\big)$. The kernel of the second map has cardinality at least that of $\tc_{-1}(k)$, so $M$ (the kernel of the composite) must again have cardinality at least that of $\tc_{-1}(k)$, so at least of $k$ by assumption.
\end{proof}
\section{Groups of Lie type $A_1$}
We now apply the results of the previous sections to calculate $\tc_i(k[G];p)$ for certain finite groups of Lie type in equal characteristic. We will make the following (non-standard) definition.

\begin{definition}
    The finite groups of Lie type $A_1$ are the families $\psl_2(\mathbb{F}_q)$, $\pgl_2(\mathbb{F}_q)$, $\spl_2(\mathbb{F}_q)$, and $\gl_2(\mathbb{F}_q)$.
\end{definition}

We will consider coefficients with the same characteristic as $\mathbb{F}_q$. 

\subsection{Reduction to the Borel subgroup}
We first show that we may reduce the calculation to the calculation for $k[N]$, where $N$ is a normaliser of a Sylow $p$-subgroup. For the groups of Lie type $A_1$, such a subgroup $N$ is a Borel subgroup of upper triangular matrices (equivalently, the stabiliser of a flag). We prove this reduction using the stable module category. Note that under the further assumption that $k$ is a splitting field, this reduction can also be conceptually framed via $G$-spectra and Brown's simplicial complex; we provide a full account of this perspective in Appendix A. The use of this stable module category for $K$-theory calculations is also explored in \cite{vogeli2025derived}; see there for more details and applications.

\begin{definition}
    For a finite group $G$ and a field $k$, the stable module category of $k[G]$ is the Verdier quotient
    \[ \stMod(k[G]) = D^\mathrm{b}(k[G])/D^\mathrm{perf}(k[G]) \]
    where $D^\mathrm{b}(k[G])$ is the derived category of bounded, finite dimensional $k[G]$-complexes.
\end{definition}

By the localisation property of $K$-theory, we have a cofiber sequence
\[ K(k[G]) \xrightarrow{} K(D^\mathrm{b}(k[G])) \xrightarrow{} K(\stMod(k[G])). \]
The middle term is simple to understand. Indeed, $D^\mathrm{b}(k[G])$ is compactly generated by the simple modules $S_1,\ldots,S_n$ of $k[G]$, and so a dévissage argument (\cite[Proposition 3.2.1]{vogeli2025derived}) implies that $K(D^\mathrm{b}(k[G]))\simeq \bigoplus K(\mathrm{End}(S_i))$. The latter is equivalent to $K(k[G]/J)$, and $K_i(k[G]/J)$ is uniquely $p$-divisible for $i>0$ (see Lemma \ref{artin-wed}). In particular, $K_{i+1}(\stMod(k[G]))$ and $K_i(k[G])$ share the same $p$-torsion for $i>0$.

Now suppose that $H \xrightarrow{} G$ induces an equivalence $\stMod(k[H]) \xrightarrow{} \stMod(k[G])$. Then from the long exact sequence of the above cofibre sequence, it follows that $K_i(k[H]) \xrightarrow{} K_i(k[G])$ induces an isomorphism on the $p$-torsion subgroup for $i>0$. But (by Lemma \ref{k-split} and the surrounding discussion), this is equivalent to $\tc(k[H];p) \xrightarrow{} \tc(k[G];p)$ being an isomorphism for $i>0$.

\begin{prop}\label{red-to-borel}
    Suppose that $G$ is a finite group of Lie type $A_1$ in characteristic $p$, and $N$ is the normaliser of a Sylow $p$-subgroup. Then for $k$ a perfect field of characteristic $p$, the maps
    \[ \tc_i(k[N];p) \xrightarrow{} \tc_i(k[G];p) \]
    are isomorphisms for $i>0$.
\end{prop}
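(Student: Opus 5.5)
The paragraph preceding the statement reduces everything to showing that restriction/induction along $N \hookrightarrow G$ induces an equivalence $\stMod(k[N]) \xrightarrow{\simeq} \stMod(k[G])$. Given this, the localisation cofibre sequence $K(k[G]) \to K(D^\mathrm{b}(k[G])) \to K(\stMod(k[G]))$ and its analogue for $N$ compare by a map of cofibre sequences. The middle terms $K_i(D^\mathrm{b})\simeq K_i(k[-]/J)$ are uniquely $p$-divisible for $i>0$ (Lemma \ref{artin-wed}). Hence $K_i(k[N]) \to K_i(k[G])$ is an isomorphism on $p$-torsion for $i>0$, and by the comparison with $\tc$ (Lemma \ref{k-split}) this gives $\tc_i(k[N];p)\cong\tc_i(k[G];p)$ for $i>0$. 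So the plan is to prove the stable module equivalence. I would also check naturality: the identification of the $p$-torsion of $K_i$ with $\tc_i$ should be via the trace, so the isomorphism is induced by the map in the statement.

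For the equivalence I would use the standard criterion for a subgroup $H\le G$ containing a Sylow $p$-subgroup $P$. The functors $\mathrm{Res}^G_H$ and $\mathrm{Ind}_H^G$ induce inverse equivalences of stable module categories provided that $P\cap {}^gP$ is trivial for every $g\notin H$, i.e.\ $H$ strongly controls trivial intersections (Green correspondence in the TI case). Concretely, by Mackey's formula, for an $H$-module $M$ we have
\[ \mathrm{Res}^G_H\mathrm{Ind}_H^G M \cong M \oplus \bigoplus_{g\in H\backslash G/H,\ g\notin H} \mathrm{Ind}^H_{H\cap {}^gH}\mathrm{Res}^{{}^gH}_{H\cap {}^gH}\,{}^gM. \]
The extra summands are induced from $H\cap{}^gH$. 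If this intersection is a $p'$-group, these summands are projective, so they vanish in $\stMod(k[H])$. The symmetric argument for $\mathrm{Ind}\,\mathrm{Res}$ uses that every $G$-module is a summand of $\mathrm{Ind}\,\mathrm{Res}$ of itself (as $p\nmid [G:H]$). Together these give fully faithfulness and essential surjectivity on $\stMod$. Both sides are Verdier quotients of $D^\mathrm{b}$, and the functors are exact, so I would phrase this at the level of stable $\infty$-categories: the unit and counit are equivalences modulo perfect complexes.

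The group-theoretic input, which I expect to be the main (though elementary) step, is that for $G$ of Lie type $A_1$ with Sylow $P$ the unipotent upper triangular subgroup and $N=B$ the Borel, $P$ is a TI subgroup with $N_G(P)=B$. To see this, identify $G$ with its action on $\mathbb{P}^1(\mathbb{F}_q)$ (or on lines in $\mathbb{F}_q^2$ for $\spl_2,\gl_2$). Then $P$ fixes exactly the point $\infty$ and acts simply transitively on the remaining $q$ points, up to the centre. If $g\notin B$, then ${}^gP$ fixes $g\infty\neq\infty$, and a nontrivial element of $P\cap{}^gP$ would fix two points, which forces it to be central and semisimple, hence a $p'$-element; so $P\cap{}^gP=1$. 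Consequently $B\cap{}^gB$ is a torus, which has order prime to $p$. I would verify this uniformly by noting that every nontrivial unipotent element has a unique fixed line, handling the centres in $\spl_2$, $\gl_2$ by passing to images in $\pgl_2$. This places us in the situation of the Mackey argument above, completing the proof.
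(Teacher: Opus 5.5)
Your proposal is correct and follows the paper's proof. You reduce via the localisation sequence to the equivalence $\stMod(k[N])\simeq\stMod(k[G])$, and you get $\mathrm{Res}\,\mathrm{Ind}\simeq\mathrm{id}$ from the Mackey formula because $N\cap{}^gN$ is a $p'$-group for $g\notin N$. The differences are minor.

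For the other direction you use that every $k[G]$-module is a summand of $\mathrm{Ind}\,\mathrm{Res}$ of itself. The paper instead uses the projection formula and checks that $k[G/N]\to k$ is a stable isomorphism after restricting to $N$.

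You also justify the $p'$-intersection through the TI property of $P$. One slip there: an element fixing two points of $\mathbb{P}^1$ lies in a split torus, not the centre. This does not affect your conclusion that it is a $p'$-element.
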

\begin{proof}
    By the preceding discussion, it suffices to show that the induction map $\stMod(k[N]) \xrightarrow{} \stMod(k[G])$ is an equivalence of categories: this is standard, but we show it here for completeness. 

    Consider the composite of induction and restriction on $\stMod(k[N])$. By the Mackey formula, this sends a module $M$ to
    \[ \bigoplus_{g \in N\backslash G/ N} \mathrm{ind}^N_{N \cap^gN} (\mathrm{res}^{^gN}_{N \cap ^gN} \ ^gN). \]
    For $g \notin N$, the intersection $N \cap^gN$ has order prime to $p$ (it is conjugate to a subgroup of the diagonal torus). Hence the corresponding factors in the direct sum are projective relative to $N$, so they vanish in the stable module category, and the above expression is simply $M$.

    For the other direction, note that by the projection formula, restriction followed by induction is equivalent to tensoring with $k[G/N]$ in $\stMod(k[G])$. Hence we must show that the natural map $k[G/N] \xrightarrow{} k$ is an equivalence, or, equivalently, that the kernel in $k[G]$-modules is projective. For this it suffices to check that the kernel is projective after restricting to $N$, since $N$ contains a Sylow subgroup. But this holds by the previous paragraph.
\end{proof}

\subsection{$\tc$ of the Borel subgroup}
We now describe $\tc_i(k[N];p)$, where $N$ is again the Borel subgroup (the normaliser of a Sylow). For the families $\psl$ and $\pgl$, the proof is immediate from the work we have done, since in these cases $N$ is a Frobenius group.

\begin{prop}\label{psl-pgl-cal}
    Suppose that $G$ is either $\psl_2(\mathbb{F}_q)$ or $\pgl_2(\mathbb{F}_q)$, where $q=p^r$. Let $N$ be the normaliser of a Sylow $p$-subgroup, and $k$ a perfect field of characteristic $p$. Then for $i>0$,
    \[ \tc_i(k[N];p) \cong k^{n_i} \]
    where the $n_i$ are given by the Hilbert series:
    \[     \begin{cases}
        2(1+x+x^2+\dots)^{r-1}(x+x^3+x^5+\dots) & G= \psl_2(\mathbb{F}_q), \ q \text{ odd} \\
        (1+x+x^2+\dots)^{r-1}(x+x^3+x^5+\dots) & G= \pgl_2(\mathbb{F}_q)
    \end{cases}\]

\end{prop}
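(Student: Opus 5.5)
The plan is to identify $N$ as a Frobenius group $K\rtimes H$ with $K\cong\mathbb{F}_q\cong C_p^r$ and $H$ cyclic of order prime to $p$. Then Proposition \ref{tc-frob-groups} together with the discussion following it gives $\tc_i(k[N];p)\cong \tc_i(k[C_p^r];p)_H$ for $i>0$, and we compute these coinvariants using Theorem \ref{tc-el-al}.

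For $\pgl_2(\mathbb{F}_q)$, the Borel subgroup is $N\cong\mathbb{F}_q\rtimes\mathbb{F}_q^\times$ (affine group), where $t$ acts on $\mathbb{F}_q$ by multiplication. This is a Frobenius group, since $\mathbb{F}_q^\times$ acts freely on $\mathbb{F}_q\setminus\{0\}$. It acts on $C_p^r=\mathbb{F}_q$ through the Singer cycle in $\gl_r(\mathbb{F}_p)$, and $|H|=q-1$ is prime to $p$.

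For $\psl_2(\mathbb{F}_q)$ with $q$ odd, the image of the upper triangular matrices is $\mathbb{F}_q\rtimes(\mathbb{F}_q^\times)^2$. Here $t$ acts via $\mathrm{diag}(t,t^{-1})$, i.e.\ by multiplication by $t^2$, and the action factors through $\{\pm1\}$. So $H$ is the subgroup of squares, of order $(q-1)/2$. It is prime to $p$, acts freely on $\mathbb{F}_q\setminus\{0\}$ as a subgroup of the Singer cycle, and so $N$ is again Frobenius.

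Next I compute the coinvariants. When $k$ is finite, Theorem \ref{tc-el-al} says $\tc_i(k[C_p^r];p)$ is a free $\mathbb{F}_p[H]$-module, as $H$ acts freely on $\mathbb{F}_p^r\setminus\{0\}$. Hence the coinvariants have $\mathbb{F}_p$-dimension $\dim/|H|$. The $k$-dimension then gets divided by $|H|$: $(p^r-1)/(q-1)=1$ for $\pgl$ and $(q-1)/((q-1)/2)=2$ for $\psl$, which gives the stated Hilbert series. One should check that the $\mathbb{F}_p$-dimension of $\tc_i$ is $n_i[k:\mathbb{F}_p]$, so that the quotient is $k^{n_i/|H|}$ in the sense of an $\mathbb{F}_p$-vector space of that dimension. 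This is all the statement means, since $\tc_i$ is only a $\mathbb{Z}_p$-module.

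The main obstacle is $k$ infinite, where freeness is not available from Theorem \ref{tc-el-al}. There I argue by cardinality, as in the proof of Theorem \ref{tc-el-al}. The coinvariants are an $\mathbb{F}_p$-vector space of cardinality at most $|k|$, being a quotient of $\tc_i(k[C_p^r];p)\cong k^{n_i}$. For the lower bound I use that taking $H$-coinvariants is exact and equals taking invariants, as $|H|$ is prime to $p$; the invariants contain the image of the norm map. To see this image has cardinality $|k|$ when $n_i/|H|>0$, I would use the decomposition in the proof of Proposition \ref{free-action}, whose summands indexed by $g\neq e$ are permuted freely by $H$. This exhibits $\tc^W_i$ as induced from the trivial subgroup for any $k$. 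Combined with the exact sequence relating $\tc_i$ and $\tc^W_i$, whose outer terms are controlled as in that proof, this shows the invariants are infinite of cardinality $|k|$. Infinite-dimensional $\mathbb{F}_p$-vector spaces are classified by cardinality, so this finishes the argument.
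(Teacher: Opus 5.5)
Your outline is the paper's proof. You identify $N\cong C_p^r\rtimes H$ as a Frobenius group with $|H|$ prime to $p$, apply Proposition~\ref{tc-frob-groups} to get $\tc_i(k[N];p)\cong\tc_i(k[C_p^r];p)_H$, and for finite $k$ divide the $\mathbb{F}_p$-dimension by $|H|$ using the freeness in Theorem~\ref{tc-el-al}. Your identifications of $H$ (all of $\mathbb{F}_q^\times$, resp.\ the squares) and the resulting constants $1$ and $2$ are correct.

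For infinite $k$, the paper simply asserts that $\tc_i(k[C_p^r];p)_H$ has the same cardinality as $\tc_i(k[C_p^r];p)$. You rightly notice that this needs a lower bound, since an infinite $\mathbb{F}_p[H]$-module can have zero coinvariants. Part of your argument for it is sound:
\begin{itemize}
\item The decomposition in the proof of Proposition~\ref{free-action} is a finite sum over $g\neq e$. So it holds for every $k$, and it makes $\tc^W_i$ an induced module with $(\tc^W_i)_H$ of cardinality $|k|$ when $n_i>0$.
\item Applying the exact functor $(-)_H$ to $0\to H_i(BC_p^r;\mathbb{Z}_p)\to\tc_i\to\tc^W_i\to H_i(BC_p^r;\tc_{-1}(k))\to 0$ then gives the bound whenever $|\tc_{-1}(k)/p|<|k|$.
\item That case includes every infinite $k$ algebraic over $\mathbb{F}_p$, since then $\tc_{-1}(k)/p\cong k/\{x-x^p\}$ has order at most $p$.
\end{itemize}

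The gap is the remaining case $|\tc_{-1}(k)/p|=|k|$, for example $k$ the perfection of $\mathbb{F}_p(t)$. There, ``the outer terms are controlled as in that proof'' does not go through. The argument in Theorem~\ref{tc-el-al} shows only non-equivariantly that $Z=\ker\bigl(\bigoplus_L K_L\to H_i(BC_p^r;\tc_{-1}(k))\bigr)$ is large.

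Because $\tc_{-1}(k)$ is torsion free, $Z\cong Z_0\otimes_{\mathbb{F}_p}\tc_{-1}(k)/p$. Here $Z_0$ is a finite-dimensional $\mathbb{F}_p[H]$-module and $H$ acts trivially on the second factor. Hence $Z^H=Z_0^H\otimes\tc_{-1}(k)/p$, which is large only if $Z_0^H\neq 0$.

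You have not checked $Z_0^H\neq 0$, and it is not automatic. In general $H$ does not permute the lines $L$ freely; the stabiliser is $H\cap\mathbb{F}_p^\times$. For $r=1$ one even has $Z_0=0$ outright, since there is a single line with $K_L=H_i(BC_p;\tc_{-1}(k))$; this is why that proof begins by assuming $r>1$.

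So your lower bound is not established in this case, which is exactly the point the paper passes over. Closing it needs additional information about the $H$-invariants of $\ker\bigl(\tc^W_i\to H_i(BC_p^r;\tc_{-1}(k))\bigr)$, beyond what Theorem~\ref{tc-el-al} and Proposition~\ref{free-action} provide.
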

\begin{proof}
    In each case, the group $N$ is a Frobenius group of the form $N \cong C_p^r \rtimes H$, where $H$ has order prime to $p$. By Proposition \ref{tc-frob-groups}, we then have that 
    \[ \tc_i(k[N];p) \cong \tc_i(k[C_p^r];p)_H. \]
    We evaluate this directly from Theorem \ref{tc-el-al}. If $k$ is infinite, then the cardinality of $\tc_i(k[C_p^r];p)_H$ is equal to that of $\tc_i(k[C_p^r];p)$, which is equal to that of $k^{n_i}$. Suppose then that $k$ is finite. The action of $H$ on $C_p^r$ is of the form required in Theorem \ref{tc-el-al}, so the action on $\tc_i(k[C_p^r];p)$ is free, and we have that $\tc_i(k[C_p^r];p)_H$ is a $\mathbb{F}_p$-module with dimension
    \[ \mathrm{dim}_\mathbb{F_p}(\tc(k[C_p^r];p))/|H| \]
    which gives the required formulas.
\end{proof}

We now turn to Borel subgroups for the families $\spl$ and $\gl$. In these cases, the Borel subgroup is not Frobenius. Nevertheless, we show that one may in fact reduce the calculation to the Borel subgroups of $\psl$ and $\pgl$. 

If $k$ is a splitting field, there is a slick argument for the reduction: we sketch this for $\gl$. For $N$ the Borel subgroup of $\gl_2(\mathbb{F}_q)$, and $k$ a splitting field (equivalently, an extension of $\mathbb{F}_q$), the group algebra decomposes as
\[ k[N] \cong B_1\oplus\ldots\oplus B_{q-1} \]
(such $B_i$ are known as blocks of $k[N]$). Moreover each algebra $B_i$ is isomorphic to the group algebra of the Borel subgroup of $\pgl_2(\mathbb{F}_q)$. Thus we may deduce the groups $\tc_i(k[N];p)$ from Proposition \ref{psl-pgl-cal}. 

When $k$ is not a splitting field, the above argument cannot work, since the idempotents defining the block decomposition are not defined. We instead analyse the spaces with Frobenius lift $LBN$. It is interesting to observe that such an analysis can be used in place of block theory.

We first deal with the simpler case of $\spl$.

\begin{prop}\label{sl-cal}
    Suppose that $G$ is $\spl_2(\mathbb{F}_q)$, where $q=p^r$ for $p$ odd. Let $N$ be the normaliser of a Sylow $p$-subgroup, and $k$ a perfect field of characteristic $p$. Then for $i>0$,
    \[ \tc_i(k[N];p) \cong k^{n_i} \]
    where the $n_i$ are given by the Hilbert series:
    \[ 4(1+x+x^2+\dots)^{r-1}(x+x^3+x^5+\dots) \]
\end{prop}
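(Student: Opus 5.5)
The Borel subgroup of $\spl_2(\mathbb{F}_q)$ is $N \cong \mathbb{F}_q \rtimes \mathbb{F}_q^\times$, with $t\in\mathbb{F}_q^\times$ acting on $\mathbb{F}_q$ by multiplication by $t^2$. Its centre is $Z=\{\pm 1\}$, and $N/Z$ is the Borel subgroup $\bar N \cong \mathbb{F}_q \rtimes (\mathbb{F}_q^\times)^2$ of $\psl_2(\mathbb{F}_q)$, which is Frobenius. The plan is to decompose $LBN$ as a space with $p$-Frobenius lift according to the two cosets of $Z$. Then each piece contributes a copy of the $\psl$ answer of Proposition \ref{psl-pgl-cal}, and the total is $2\cdot 2 = 4$ times the basic series.

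\textbf{Step 1 (conjugacy classes).} Write $K=\mathbb{F}_q$ and $H=\mathbb{F}_q^\times$. Every element of $N$ is either conjugate into $H$ (if its diagonal part is not $\pm1$) or lies in $K\cup(-1)K = K\times Z$. Since $|Z|=2$ is prime to $p$, the sets $K$ and $-K$ are each closed under conjugation and under $g\mapsto g^p$, and so is the set $N\setminus(K\times Z)$. Proposition \ref{sub-frob-lift} therefore gives a decomposition of spaces with $p$-Frobenius lift
\[ LBN \simeq L_{K}BN \sqcup L_{-K}BN \sqcup L_{N\setminus (K\times Z)}BN. \]
For $h\in H\setminus Z$ one has $C_N(h)=H$, so the last piece consists of copies of $BH$. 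Its $\tc$ is a summand of $\tc$ of $\thh(k)\otimes\mathbb{S}[LBH]$, which vanishes in positive degrees because $k[H]$ is semisimple (\cite[Corollary IV.4.10]{nikolaus2018topological}).

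\textbf{Step 2 (the two $K$-pieces).} By Proposition \ref{tc-normal}, $\tc(\thh(k)\otimes \mathbb{S}[L_KBN];p) \simeq \tc(k[K];p)_{hH}$. Multiplication by the central element $-1$ gives an equivalence $L_KBN\simeq L_{-K}BN$ compatible with the $S^1$-action. It is also compatible with the $p$-Frobenius lift, because $(-g)^p=-g^p$ when $p$ is odd. Hence both pieces have the same $\tc$. Next, $H$ acts on $K$ through its quotient $H/Z\cong(\mathbb{F}_q^\times)^2$, and $Z$ acts trivially on $BK$. Because $|H|$ is prime to $p$ and $\tc_i(k[K];p)$ is an $\mathbb{F}_p$-module (Theorem \ref{tc-el-al}), taking homotopy orbits is exact, and
\[ \tc_i(k[K];p)_{H} \cong \tc_i(k[K];p)_{H/Z}\quad (i>0). \]
The group $(\mathbb{F}_q^\times)^2$ of order $(q-1)/2$ acts freely on $\mathbb{F}_q\setminus\{0\}$. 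For $k$ finite, Theorem \ref{tc-el-al} then makes the action free, so the coinvariants have dimension $2/(q-1)$ times the dimension for $k[C_p^r]$. This is exactly the $\psl$ count $2(1+x+\dots)^{r-1}(x+x^3+\dots)$. For $k$ infinite, the cardinality argument from the proof of Proposition \ref{psl-pgl-cal} applies.

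\textbf{Step 3 (assembly).} Adding the two identical summands gives $4(1+x+\dots)^{r-1}(x+x^3+\dots)$. When $k$ is infinite, the cardinality is unchanged by this, as required.

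The main obstacle I expect is Step 1's claim that the decomposition respects the Frobenius-lift structure. It must be a genuine splitting of $p$-cyclotomic spectra, not merely of $S^1$-spectra, and the translation by $-1$ in Step 2 must be an equivalence of spaces with $p$-Frobenius lift. I would check the translation by noting that it is induced by the automorphism of the groupoid $LBN$ sending $(X,\phi)$ to $(X,z\phi)$ for central $z$. This automorphism commutes with the $S^1$-action. It intertwines the $p$-power map with itself precisely because $z^p=z$, which holds since $z^2=1$ and $p$ is odd.
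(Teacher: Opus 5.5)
Your Step 1, the vanishing of the piece whose diagonal entries are $\neq\pm1$, and your computation of the $K$-piece are all fine. The vanishing argument is the paper's own argument via the diagonal torus; the $K$-piece uses Proposition \ref{tc-normal} and the free $(\mathbb{F}_q^\times)^2$-action.

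The gap is in Step 2. Translation by $z=-1$ is \emph{not} $S^1$-equivariant, so it does not give an equivalence $L_KBN\simeq L_{-K}BN$ of spaces with $S^1$-action, let alone of spaces with $p$-Frobenius lift. In the groupoid model the circle action is not an action by groupoid automorphisms. It is encoded by the tautological natural automorphism $(X,\phi)\mapsto\phi$ of the identity functor, and your functor $(X,\phi)\mapsto(X,z\phi)$ carries $\phi$ to $\phi$, not to $z\phi$.

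Concretely, the component indexed by $g$ is $BC_N(g)$ with $S^1$ acting by ``rotation by $g$''. Its homotopy orbits therefore have fundamental group $C_N(g)/\langle g\rangle$. For the component of $1\in K$ this group is $N$. For every component of $L_{-K}BN$ it has order at most $|N|/2$, since $g\neq 1$. Hence no $S^1$-equivariant equivalence $L_KBN\simeq L_{-K}BN$ exists at all. Already for $N=C_2$, the two components of $LBC_2$ have homotopy orbits $BC_2\times BS^1$ and $BS^1$.

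So the equality of the two $\tc$'s, on which your factor $4=2\cdot 2$ rests, is not established. The conclusion happens to be true. One might hope to recover it by showing that, after tensoring with $\thh(k)$ (where $2$ is invertible), the twist by rotation by $z$ trivialises compatibly with the $p$-cyclotomic Frobenius. That would be a genuine additional argument, not the groupoid-level check you propose.

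The paper avoids this by never separating $K$ from $-K$. It takes the normal subgroup $K'=Z\times\mathbb{F}_q\cong C_2\times\mathbb{F}_q$ of elements with diagonal entries $\pm1$. Proposition \ref{tc-normal} identifies the relevant piece with $\tc(k[K'];p)_{h(\mathbb{F}_q^\times/\{\pm1\})}$. The action of the quotient on the groups $\tc_i(k[K'];p)$ depends only on the outer action, so it can be computed from the algebra. The paper then uses the equivariant splitting $k[C_2\times\mathbb{F}_q]\cong k[\mathbb{F}_q]\times k[\mathbb{F}_q]$ given by the central idempotents $(1\pm z)/2$. Each factor yields the $\psl$ count, for a total of $4$.

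Note that this block splitting mixes the cosets $K$ and $-K$; it is not your decomposition $L_KBN\sqcup L_{-K}BN$. Replacing your Step 2 with this argument repairs the proof. Combined with your correct value for the $K$-piece, it shows a posteriori that the $-K$-piece contributes the other $2(1+x+\dots)^{r-1}(x+x^3+\dots)$.
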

\begin{proof}
    Let $K \leq N$ be the subgroup of elements with equal diagonal values,
    \[ K =  \left\{ \begin{pmatrix} \lambda & x\\ 0 & \lambda \end{pmatrix} : \lambda = \pm 1, \ x\in \mathbb{F}_q\right\}\]
    and $T$ the set of elements with differing diagonal values,
    \[T = \left\{ \begin{pmatrix} \lambda & x\\ 0 & \lambda^{-1} \end{pmatrix} : \lambda\in \mathbb{F}_q^\times \setminus \{1,-1\}, \ x \in \mathbb{F}_q\right\} \]
    so that $N = K \sqcup T$.

    The sets $K, T$ are closed under conjugation and taking $p^\mathrm{th}$ powers, so by Proposition \ref{sub-frob-lift}, there is a decomposition of spaces with $p$-Frobenius lifts
    \[ LBN \simeq L_{K}BN \sqcup L_{T}BN \]
    and so an associated decomposition of $p$-cyclotomic spectra
    \[ \thh(k[N])\simeq \thh(k) \otimes \mathbb{S}[LBN]\simeq \thh(k)\otimes \big( \mathbb{S}[L_KBN] \oplus \mathbb{S}[L_TBN]\big). \]

    We first show that $\tc_i\big(\thh(k)\otimes \mathbb{S}[L_TBN] ;p\big)$ is $0$ for $i>0$. For this, let $D$ denote the subgroup of diagonal matrices. Observe that the $D$-conjugacy classes of $T \cap D$ are in bijection with the $N$-conjugacy classes of $T$, and that for $g \in T \cap D$ the inclusion $C_D(g) \hookrightarrow{} C_N(g)$ is an isomorphism, so that
    \[ L_{T\cap D}BD \xrightarrow{} L_TBN \]
    is an equivalence of spaces with $p$-Frobenius lifts. Now
    \[ LBD \simeq L_{K\cap D}BD \sqcup L_{T \cap D}BD \]
    as spaces with $p$-Frobenius lifts, hence the $p$-cyclotomic spectrum
    \[ \thh(k) \otimes \mathbb{S}[L_TBN] \simeq \thh(k) \otimes \mathbb{S}[L_{T\cap D}BD] \]
    is a summand of $\thh(k) \otimes \mathbb{S}[LBD] \simeq \thh(k[D])$. But $k[D]$ is a perfect $\mathbb{F}_p$-algebra, so $\tc_i(k[D];p)\cong 0$ for $i>0$, hence $\tc_i\big(\thh(k)\otimes \mathbb{S}[L_TBN] ;p\big)\cong 0$ for $i>0$.

    It remains to calculate the groups $\tc_i\big(\thh(k)\otimes \mathbb{S}[L_KBN] ;p\big)$. For this, we use Proposition \ref{tc-normal}, which states that there is an equivalence
    \[ \tc(k[K];p)_{h\mathbb{F}_q^\times/\{\pm 1 \}} \xrightarrow{\simeq} \tc\big(\thh(k)\otimes \mathbb{S}[L_KBN] ;p\big). \]

    Now $\mathbb{F}_q^\times/\{\pm 1\}$ has order prime to $p$, so the homotopy groups of the left hand side are simply given by the orbits $\tc_i(k[K];p)_{\mathbb{F}_q^\times/\{\pm 1\}}$. Moreover, the action of $\mathbb{F}_q^\times/\{\pm 1\}$ on the abelian group $\tc_i(k[K];p)$ depends only on the map $\mathbb{F}_q^\times/\{\pm 1\} \xrightarrow{} \mathrm{Out}(K)$, hence can be computed via the action on the algebra $k[K]$ (this is not generally true on the level of spectra).

    On the group $K \cong C_2 \times \mathbb{F}_q$, $\mathbb{F}_q^\times/\{\pm 1\}$ acts trivially on the first factor, and via $z\cdot x \mapsto z^2x$ on the second. Hence we see that there is an isomorphism of algebras with $\mathbb{F}_q^\times/\{\pm 1\}$-action
    \[ k[K] \simeq k[\mathbb{F}_q] \oplus k[\mathbb{F}_q]. \]
    We then compute
    \[ \tc_i(k[K];p)_{\mathbb{F}_q^\times/\{\pm 1\}} \cong \tc_i(k[\mathbb{F}_q];p)_{\mathbb{F}_q^\times/\{\pm 1\}} \oplus \tc_i(k[\mathbb{F}_q];p)_{\mathbb{F}_q^\times/\{\pm 1\}}  \]
    which gives the stated formula via Theorem \ref{tc-el-al}.
\end{proof}

We now treat the case of $\gl$. The proof is much the same, with the key difference that the situation becomes slightly more complex when $k$ is not a splitting field.

\begin{prop}\label{gl-calc}
    Suppose that $G$ is $\gl_2(\mathbb{F}_q)$, where $q=p^r$. Let $N$ be the normaliser of a Sylow $p$-subgroup, and $k$ a perfect field of characteristic $p$. Then for $i>0$,
    \[ \tc_i(k[N];p) \cong k^{n_i} \]
    where the $n_i$ are given by the Hilbert series:
    \[ (q-1)(1+x+x^2+\dots)^{r-1}(x+x^3+x^5+\dots) \]
\end{prop}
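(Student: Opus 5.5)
The plan mirrors the proof of Proposition \ref{sl-cal}. Let $N$ be the Borel subgroup of upper triangular matrices in $\gl_2(\mathbb{F}_q)$, with $U \cong \mathbb{F}_q$ its unipotent radical and $D \cong (\mathbb{F}_q^\times)^2$ the diagonal torus. Split $N = K \sqcup T$, where
\[ K = \left\{ \begin{pmatrix} \lambda & x\\ 0 & \lambda \end{pmatrix} : \lambda \in \mathbb{F}_q^\times, \ x \in \mathbb{F}_q \right\} \]
is the normal subgroup of elements with equal diagonal entries, and $T$ is the complement. Both $K$ and $T$ are closed under conjugation and $p$-th powers, since $\lambda \mapsto \lambda^p$ is a bijection of $\mathbb{F}_q^\times$. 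So Proposition \ref{sub-frob-lift} gives a decomposition of $p$-cyclotomic spectra
\[ \thh(k[N]) \simeq \thh(k)\otimes\big(\mathbb{S}[L_KBN]\oplus\mathbb{S}[L_TBN]\big). \]

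For the $T$-part, I will argue exactly as in the $\spl$ case. An element of $T$ has distinct eigenvalues, so it is $U$-conjugate into $D$. The $N$-conjugacy classes of $T$ then correspond to the $D$-conjugacy classes of $T\cap D$, and for $g \in T\cap D$ one has $C_N(g)=C_D(g)=D$. Hence $L_{T\cap D}BD \to L_TBN$ is an equivalence of spaces with $p$-Frobenius lift. The $p$-cyclotomic spectrum $\thh(k)\otimes\mathbb{S}[L_TBN]$ is therefore a summand of $\thh(k[D])$. Since $k[D]$ is a perfect $\mathbb{F}_p$-algebra ($|D|$ is prime to $p$), its $\tc_i$ vanishes for $i>0$.

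For the $K$-part, Proposition \ref{tc-normal} gives
\[ \tc\big(\thh(k)\otimes\mathbb{S}[L_KBN];p\big) \simeq \tc(k[K];p)_{hN/K}, \qquad N/K \cong \mathbb{F}_q^\times. \]
This quotient has order prime to $p$, and $\tc(k[K];p)$ is a $\mathbb{Z}_p$-module, so the positive homotopy groups are the orbits $\tc_i(k[K];p)_{\mathbb{F}_q^\times}$. As in the $\spl$ case, the action on homotopy groups factors through $\mathrm{Out}(K)$. Now $K \cong \mathbb{F}_q^\times \times \mathbb{F}_q$ is abelian, and $\mathrm{diag}(a,b)$ acts trivially on the scalar factor and by $x\mapsto (a/b)x$ on $\mathbb{F}_q$, which is the full multiplicative action. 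Thus $N/K$ acts on $K$ through $\mathrm{Aut}(\mathbb{F}_q)$ only, freely on $\mathbb{F}_q\setminus\{0\}$. I will then apply the Künneth-type fact that the $S^1$-equivariant decomposition $LB(C\times P)\simeq LBC\times LBP$ is compatible with Frobenius lifts, where $C=\mathbb{F}_q^\times$ and $P=\mathbb{F}_q$. For $|C|$ prime to $p$, $LBC$ is a disjoint union of $BC$'s indexed by $C$, with $p$ permuting the components by $c\mapsto c^p$.

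\textbf{This is the main obstacle.} When $k$ is not a splitting field, $k[C]$ is not $\prod_{q-1}k$, and the Frobenius permutes the components. I would first try to identify $\thh(k)\otimes\mathbb{S}[LBK] \simeq \thh(k[C])\otimes_{\thh(k)}\thh(k[P])$. Here $k[C]$ is a product of finite étale extensions $k'$ of $k$ of total $k$-dimension $q-1$, which gives
\[ \tc(k[K];p)\simeq \bigoplus \tc(k'[P];p). \]
This should be compatible with the $\mathbb{F}_q^\times$-action, since the action is trivial on the $C$-factor. Theorem \ref{tc-el-al}, applied to each perfect field $k'$, together with the free action of $\mathbb{F}_q^\times$ (the Singer cycle), gives $\tc_i(k'[P];p)_{\mathbb{F}_q^\times}\cong k'^{m_i}$. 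Here $m_i$ is the coefficient in $(1+x+\dots)^{r-1}(x+x^3+\dots)$, by the same computation as in Proposition \ref{psl-pgl-cal}: for finite $k'$ divide the dimension by $q-1$, and for infinite $k'$ count cardinality.

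Finally, summing gives $\bigoplus k'^{m_i}\cong k^{(q-1)m_i}$ as abelian groups, because $\sum[k':k]=q-1$. This is the claimed Hilbert series.
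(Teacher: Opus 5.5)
Your proposal is correct and follows the paper's proof essentially step for step. You use the same splitting $N = K \sqcup T$, the same reduction of the $T$-part to a summand of $\thh(k[D])$, and the same treatment of the $K$-part via Proposition~\ref{tc-normal} and the $\mathbb{F}_q^\times$-equivariant decomposition $k[K]\cong k[C_{q-1}]\otimes_k k[\mathbb{F}_q]\cong\bigoplus_j k_j[\mathbb{F}_q]$, followed by Theorem~\ref{tc-el-al} on each factor and summing, with the K\"unneth detour through $LB(C\times P)$ being unnecessary since the paper works directly at the level of this algebra decomposition.
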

\begin{proof}
    As in the previous proof, let $K \leq N$ be the subgroup of elements with equal diagonal values,
    \[ K =  \left\{ \begin{pmatrix} \lambda & x\\ 0 & \lambda \end{pmatrix} : \lambda \in \mathbb{F}_q^\times, \ x\in \mathbb{F}_q\right\}\]
    and $T$ the set of elements with differing diagonal values,
    \[T = \left\{ \begin{pmatrix} \lambda & x\\ 0 & \mu \end{pmatrix} : \lambda,\mu\in \mathbb{F}_q^\times, \ \lambda \neq \mu, \ x \in \mathbb{F}_q\right\} \]
    so that $N = K \sqcup T$.

    We have decompositions 
    \[ LBN \simeq L_{K}BN \sqcup L_{T}BN \]
    and
    \[ \thh(k[N])\simeq \thh(k) \otimes \mathbb{S}[LBN]\simeq \thh(k)\otimes \big( \mathbb{S}[L_KBN] \oplus \mathbb{S}[L_TBN]\big). \]
    We first show that $\tc_i\big(\thh(k)\otimes \mathbb{S}[L_TBN] ;p\big)$ is $0$ for $i>0$. Indeed, letting $D$ denote the subgroup of diagonal matrices, we have by the same argument as in the previous proof that $\tc_i\big(\thh(k)\otimes \mathbb{S}[L_TBN] ;p\big)$ is a summand of $\tc_i(k[D];p)$, which is $0$ since $k[D]$ is a perfect $\mathbb{F}_p$-algebra.

    We now turn to $\tc_i\big(\thh(k)\otimes \mathbb{S}[L_KBN];p\big)$. By Proposition \ref{tc-normal}, there is an equivalence
    \[ \tc(k[K];p)_{h\mathbb{F}_q^\times} \xrightarrow{\simeq} \tc\big(\thh(k)\otimes \mathbb{S}[L_KBN] ;p\big). \]
    Now $\mathbb{F}_q^\times$ acts on $K \cong C_{q-1}\times \mathbb{F}_q$ trivially on the first factor, and by multiplication on the second. Consider the ring $k[C_{q-1}]$. By semi-simplicity, it splits as a product of perfect field extensions of $k$, 
    \[ k[C_{q-1}] \cong k_1 \oplus \ldots \oplus k_l \]
    Now we obtain an isomorphism of algebras with $\mathbb{F}_q^\times$-action:
    \[ k[K]\cong k[C_{q-1}]\otimes_k k[\mathbb{F}_q] \cong k_1[\mathbb{F}_q]\oplus \ldots \oplus k_l[\mathbb{F}_q]. \]
    As in the proof of Proposition \ref{sl-cal}, the action of $\mathbb{F}_q^\times$ on the group $\tc_i(k[K];p)$ may equivalently be described via the action on the algebra $k[K]$. Hence we find that 
    \[ \pi_i(\tc(k[K];p)_{h\mathbb{F}_q^\times}) \cong \tc_i(k[K];p)_{\mathbb{F}_q^\times} \cong \tc_i(k_1[\mathbb{F}_q];p)_{\mathbb{F}_q^\times} \oplus \ldots \oplus \tc_i(k_l[\mathbb{F}_q];p)_{\mathbb{F}_q^\times}. \]
    Now applying Theorem \ref{tc-el-al}, we evaluate the above as
    \[\tc_i(k_j[\mathbb{F}_q];p)_{\mathbb{F}_q^\times} \cong k_j^{m_i} \]
    where the $m_i$ are given by the Hilbert series
    \[(1+x+x^2+\dots)^{r-1}(x+x^3+x^5+\dots).\]
    But then
    \[ \tc_i(k[K];p)_{\mathbb{F}_q^\times}\cong k_1^{m_i}\oplus\ldots\oplus k_l^{m_i} \cong (k[C_{q-1}])^{m_i} \cong k^{n_i}\]
    where the $n_i$ are as in the statement.
\end{proof}

We may now conclude our calculation.

\begin{theorem}\label{main-thm}
    Suppose that $G$ is a finite group of Lie type $A_1$ over $\mathbb{F}_q$, where $q=p^r$, and $k$ is a perfect field of characteristic $p$. Then
    \[\tc_i(k[G];p) \cong k^{n_i} \]
    where the $n_i$ are given by a Hilbert series of the form
    \[ C(1+x+x^2+\dots)^{r-1}(x+x^3+x^5+\dots). \]
    The constant $C$ has values
    \begin{itemize}
        \renewcommand\labelitemi{--}
        \item $C=2$ for $G=\psl_2(\mathbb{F}_q)$,
        \item $C=1$ for $G=\pgl_2(\mathbb{F}_q)$,
        \item $C=4$ for $G=\spl_2(\mathbb{F}_q)$,
        \item $C=q-1$ for $G=\gl_2(\mathbb{F}_q)$.
    \end{itemize}
\end{theorem}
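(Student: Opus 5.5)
The plan is to combine the reduction to the Borel subgroup with the Borel computations already in hand. Throughout we take $i>0$, since the statement concerns the positive-degree groups.

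First we invoke Proposition \ref{red-to-borel}. For $G$ of Lie type $A_1$ over $\mathbb{F}_q$ and $N$ the normaliser of a Sylow $p$-subgroup, it says the maps $\tc_i(k[N];p)\to\tc_i(k[G];p)$ are isomorphisms for $i>0$. The theorem therefore reduces to computing $\tc_i(k[N];p)$ in each of the four families.

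Next we read off the Borel computations case by case.

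\begin{itemize}
\item For $\pgl_2(\mathbb{F}_q)$, Proposition \ref{psl-pgl-cal} gives constant $C=1$. Here $N\cong \mathbb{F}_q\rtimes\mathbb{F}_q^\times$ is Frobenius and the complement acts freely on $\mathbb{F}_q\setminus\{0\}$. So the $p^r-1$ of Theorem \ref{tc-el-al} is divided by $q-1$.
\item For $\psl_2(\mathbb{F}_q)$ with $q$ odd, the same proposition gives $C=2$. The complement is $\mathbb{F}_q^\times/\{\pm1\}$, of order $(q-1)/2$, acting freely by squares.
\item For $\spl_2(\mathbb{F}_q)$ with $p$ odd, Proposition \ref{sl-cal} gives $C=4$.
\item For $\gl_2(\mathbb{F}_q)$, Proposition \ref{gl-calc} gives $C=q-1$.
\end{itemize}

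The only point needing care is the degenerate case $p=2$, which the preceding propositions exclude for $\psl_2$ and $\spl_2$.

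\begin{itemize}
\item When $q$ is even we have $\{\pm1\}=1$ and the centre of $\spl_2(\mathbb{F}_q)$ is trivial. Hence $\psl_2(\mathbb{F}_q)=\spl_2(\mathbb{F}_q)\cong\pgl_2(\mathbb{F}_q)$, and the answer is governed by the Borel $\mathbb{F}_q\rtimes\mathbb{F}_q^\times$ with $C=1$.
\item I would therefore state the theorem's constants $C=2$ and $C=4$ for odd $q$, and note that for even $q$ all three groups coincide with $C=1$. This is consistent with $2/\gcd(2,q-1)$ and $4/\gcd(2,q-1)^2$.
\item The Frobenius argument still applies verbatim in the even case: the torus $\mathbb{F}_q^\times$ acts freely on $\mathbb{F}_q\setminus\{0\}$ and has order prime to $p$.
\end{itemize}

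Combining the reduction with these computations yields $\tc_i(k[G];p)\cong k^{n_i}$ with the stated Hilbert series.

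I expect no step to be a real obstacle, since all the substantive work sits in the earlier propositions. The only thing to check carefully is that the isomorphism in Proposition \ref{red-to-borel} holds for all perfect $k$, not just splitting fields. That proof goes through the stable module category and the $p$-torsion comparison of $K$-theory with $\tc$, and neither step uses a splitting hypothesis, so it suffices.
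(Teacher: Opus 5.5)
Your main argument is the paper's proof: Proposition~\ref{red-to-borel} reduces to the Borel subgroup $N$, and the four cases are then read off from Propositions~\ref{psl-pgl-cal}, \ref{sl-cal} and \ref{gl-calc}.

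Where you go beyond the paper is the case of even $q$. The theorem, like Theorem~\ref{intro-thm-a}, asserts $C=2$ and $C=4$ with no parity hypothesis, yet the propositions it cites assume $q$ odd, respectively $p$ odd. Your observation is correct: for even $q$ one has $\psl_2(\mathbb{F}_q)=\spl_2(\mathbb{F}_q)\cong\pgl_2(\mathbb{F}_q)$, with Frobenius Borel $\mathbb{F}_q\rtimes\mathbb{F}_q^\times$. Since squaring is bijective on $\mathbb{F}_q^\times$, the complement acts freely on $\mathbb{F}_q\setminus\{0\}$, so $C=1$ there. The statement therefore needs $q$ odd for those two families. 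The discrepancy is only detectable for finite $k$: for infinite $k$, all $k^m$ with $m\geq 1$ are abstractly isomorphic, by the cardinality argument the paper itself uses.

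One slip: your uniform formulas are inverted. The expressions $2/\gcd(2,q-1)$ and $4/\gcd(2,q-1)^2$ equal $1$ for odd $q$ and $2,4$ for even $q$. The correct constants are $C=\gcd(2,q-1)$ for $\psl_2$ and $C=\gcd(2,q-1)^2$ for $\spl_2$.
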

\begin{proof}
    We first use that in each case $\tc_i(k[G];p)\cong \tc_i(k[N];p)$ by Proposition \ref{red-to-borel}, where $N$ is a Borel subgroup (normaliser of a Sylow $p$-subgroup). Then the calculations are Proposition \ref{psl-pgl-cal}, Proposition \ref{sl-cal}, and Proposition \ref{gl-calc}.
\end{proof}

\subsection{From $\tc$ to $K$-theory}
We conclude with a brief guide on how $\tc$ calculations relate to algebraic $K$-theory for group rings, and in particular how to deduce Theorem \ref{intro-thm-a} of the introduction. 

Recall that the Jacobson radical of $k[G]$ is the nilpotent ideal $J$ of elements that act by $0$ on all simple modules. The quotient $k[G]/J$ is the semisimplification of $k[G]$. The $K$-theory of $k[G]$ splits into direct sum of that of the semisimple algebra $k[G]/J$ and a relative term.

\begin{lemma}\label{k-split}
    Let $k$ be a perfect field of characteristic $p$. Then for a finite group $G$ and $J$ the Jacobson radical, there is an equivalence 
    \[ K(k[G])\simeq K(k[G]/J) \oplus K(k[G],J). \]
\end{lemma}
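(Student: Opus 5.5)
The splitting should come from the quotient map $k[G]\to k[G]/J$ admitting a section as a map of $k$-algebras, up to Morita equivalence. Then $K(k[G]/J)$ is a retract of $K(k[G])$, and the complementary summand is the relative term $K(k[G],J)$, defined as the fibre of $K(k[G])\to K(k[G]/J)$.

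The first step is to use the Wedderburn--Malcev theorem. Since $k$ is perfect, $k[G]/J$ is a separable $k$-algebra: it is semisimple and each simple factor has centre a finite, hence separable, extension of $k$. Separability gives $H^2_{\mathrm{Hoch}}(k[G]/J, M)=0$ for all bimodules $M$. By induction on the nilpotency length of $J$, lifting through square-zero extensions $k[G]/J^{n+1}\to k[G]/J^n$, we obtain a subalgebra $S\subseteq k[G]$ such that the composite $S\to k[G]\to k[G]/J$ is an isomorphism. This gives ring maps $k[G]/J\xrightarrow{s} k[G]\xrightarrow{\pi} k[G]/J$ with $\pi s=\mathrm{id}$.

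Applying $K$-theory, which is functorial in ring maps, $K(\pi)\circ K(s)\simeq \mathrm{id}$. Hence $K(k[G]/J)$ is a retract of $K(k[G])$ in spectra. In a stable category a retract splits off: the fibre sequence $K(k[G],J)\to K(k[G])\to K(k[G]/J)$ has a section, so $K(k[G])\simeq K(k[G]/J)\oplus K(k[G],J)$.

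The only real obstacle is justifying the lifting step, namely separability of $k[G]/J$, which is exactly where perfectness of $k$ enters. If I wanted to avoid Wedderburn--Malcev, I would instead use idempotent lifting combined with Morita invariance, and reduce to the basic algebra. However, the Wedderburn--Malcev route seems cleanest, and I would cite Lemma \ref{artin-wed} for the structure of $k[G]/J$ as Morita equivalent to a product of perfect fields, which gives separability directly.
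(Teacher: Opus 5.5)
Your proposal is correct and is essentially the paper's argument: the paper also uses Wedderburn--Malcev to get an algebra section of $k[G]\to k[G]/J$, concludes that $K(k[G])\to K(k[G]/J)$ is split, and takes $K(k[G],J)$ to be the fibre. You additionally spell out why Wedderburn--Malcev applies, namely separability of $k[G]/J$ over the perfect field $k$ and lifting through the square-zero extensions $k[G]/J^{n+1}\to k[G]/J^n$, which the paper leaves to the citation; note that the section exists on the nose, so the phrase ``up to Morita equivalence'' is unnecessary.
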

\begin{proof}
    By the Wedderburn-Malcev theorem, the map $k[G] \xrightarrow{} k[G]/J$ admits an algebra splitting, so $K(k[G]) \xrightarrow{} K(k[G]/J)$ is split.
\end{proof}

The factor $K(k[G],J)$ is entirely controlled by $\tc$, as we now explain. In our investigation, this factor is of primary interest, since it relates most to the group structure.

\begin{prop}
    For $k$, $G$ as above, we have that 
    \[ K(k[G],J) \simeq \tc(k[G],J). \]
    Moreover, for $i>0$, $\tc_i(k[G],J) \cong \tc_i(k[G];p)$.
\end{prop}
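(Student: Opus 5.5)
The first claim follows from the Dundas--Goodwillie--McCarthy theorem. The ideal $J$ is nilpotent, so $k[G]\to k[G]/J$ is a surjection of connective ring spectra with nilpotent kernel. The theorem then says that the relative cyclotomic trace $K(k[G],J)\to \tc(k[G],J)$ is an equivalence, where $\tc$ means integral topological cyclic homology. This is exactly the statement $K(k[G],J)\simeq\tc(k[G],J)$.

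For the second claim I would pass to the $p$-typical theory and then remove the contribution of $k[G]/J$.

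\emph{Step 1: integral versus $p$-typical $\tc$.} Both $k[G]$ and $k[G]/J$ are $\mathbb{F}_p$-algebras, so their $\thh$ is $p$-complete. For such rings integral $\tc$ agrees with $\tc(-;p)$ up to terms that vanish here: the $\ell$-typical contributions for $\ell\neq p$ are trivial because $\thh$ is $p$-complete, and Remark \ref{p-comp-remark} handles $p$-completeness of $\tc(-;p)$. This gives $\tc(k[G],J)\simeq \tc(k[G],J;p)$. If needed I would argue this instead through the pullback square relating $\tc$, $\tc(-;p)$ and $\mathrm{TC}^-$ for $p$-complete cyclotomic spectra.

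\emph{Step 2: the quotient contributes nothing in positive degrees.} By Wedderburn--Malcev (Lemma \ref{k-split}), the map $k[G]\to k[G]/J$ is split, so the fibre sequence
\[ \tc(k[G],J;p)\to \tc(k[G];p)\to\tc(k[G]/J;p) \]
is split. This yields $\tc_i(k[G];p)\cong \tc_i(k[G],J;p)\oplus\tc_i(k[G]/J;p)$ for every $i$. Next I use that $k[G]/J$ is Morita equivalent to a finite product of perfect fields (Lemma \ref{artin-wed}; the centres of the simple factors are finite separable extensions of the perfect field $k$, hence perfect). Since $\tc$ is Morita invariant and additive, $\tc(k[G]/J;p)\simeq\prod_j\tc(k_j;p)$. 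By the computation recalled in Section 3, each $\tc(k_j;p)$ is concentrated in degrees $0$ and $-1$, using \cite[Corollary IV.4.10]{nikolaus2018topological}. Hence $\tc_i(k[G]/J;p)=0$ for $i>0$, and so $\tc_i(k[G],J)\cong\tc_i(k[G];p)$ for $i>0$.

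I expect Step 1 to be the main obstacle: one has to be careful about which version of $\tc$ appears in the DGM theorem, and about identifying its relative term with the $p$-typical one without error terms. The safest route is to observe that both relative terms agree after $p$-completion and are already $p$-complete. If completion issues arise in degree $0$ or $-1$, they do not affect the claim, which only concerns $i>0$.
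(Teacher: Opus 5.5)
Your proposal is correct and follows essentially the same route as the paper: Dundas--Goodwillie--McCarthy gives the first equivalence, and for the second you use that $k[G]/J$ is Morita equivalent to a product of perfect fields, so its $\tc$ vanishes in positive degrees. The only differences are that you spell out the identification of integral with $p$-typical $\tc$ for $\mathbb{F}_p$-algebras, and you use the Wedderburn--Malcev splitting rather than the long exact sequence; the paper leaves both points implicit, and its proof writes $\tc_i(k[G];p)=0$ where $\tc_i(k[G]/J;p)=0$ is meant.
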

\begin{proof}
    The first statement is the Dundas-Goodwillie-McCarthy theorem (\cite[Theorem 7.2.2.1]{dundas2012local}) applied to $k[G] \xrightarrow{} k[G]/J$. For the second, note that by the proof of Lemma \ref{artin-wed} below, $k[G]/J$ is Morita equivalent to a product of perfect fields, so that $\tc_i(k[G];p)=0$ for $i>0$ by \cite[Theorem B]{hesselholt1997k}.
\end{proof}
\begin{remark}
    By \cite[Theorem A]{geisser2011relative}, $K(k[G],J) \simeq \tc(k[G],J)$ has $p$-torsion homotopy groups that are of bounded exponent in each degree. In particular, these spectra are $p$-complete.
\end{remark}

Combining the previous two propositions, we see that $K_i(k[G])\cong \tc_i(k[G];p) \oplus K_i(k[G]/J)$ for $i>0$, and so we deduce from Theorem \ref{main-thm} the stated Theorem \ref{intro-thm-a} of the introduction. More generally, we observe that the $K$-theory of finite group rings over a perfect field of positive characteristic is entirely determined by $\tc$ and the $K$-theory of the semisimple quotient. The latter is well understood, as we now explain.

\begin{lemma}\label{artin-wed}
    There is an equivalence
    \[ K(k[G]/J) \simeq \bigoplus_{i=1}^rK(k_i) \]
    for perfect fields $k_i$.
\end{lemma}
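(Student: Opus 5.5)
The plan is to combine Wedderburn theory with Morita invariance of $K$-theory. The algebra $k[G]/J$ is a finite-dimensional semisimple $k$-algebra, so by Artin--Wedderburn
\[ k[G]/J \cong \prod_{i=1}^r M_{n_i}(D_i), \]
where the $D_i$ are finite-dimensional division algebras over $k$. Since $K$-theory commutes with finite products and is Morita invariant, this gives $K(k[G]/J)\simeq \bigoplus_i K(D_i)$. It then remains to show that each $D_i$ is in fact a field, and a perfect one.

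\textbf{Step 1: the centres are perfect fields.} Let $k_i = Z(D_i)$, a finite extension of $k$. Since $k$ is perfect, every finite extension of $k$ is perfect, so each $k_i$ is perfect.

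\textbf{Step 2: the division algebras are commutative.} I would argue that $\mathrm{Br}(k_i)$ has no nontrivial $p$-primary part, and also no other nontrivial part that could arise here.

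The first approach is to use that $k[G]$ is split over a finite extension. Taking $\ell$ a splitting field for $G$ (for instance, adjoining the relevant roots of unity to $k$), the simple $k[G]$-modules have endomorphism rings $D_i$ whose Schur indices over $k$ are $1$. This is the classical fact that Schur indices are trivial in positive characteristic. The standard proof writes $D_i$ as a subalgebra of a matrix algebra over a finite field extension generated by character values. Alternatively, one notes that $D_i$ is generated over $k_i$ by images of group elements, which lie in a finite subgroup of $D_i^\times$. Combining this with the fact that a finite-dimensional division algebra over $k_i$ in characteristic $p$ spanned by a finite multiplicative group is commutative (a Wedderburn-type argument: the $\mathbb{F}_p$-span of that finite group is a finite division ring, hence a field), we conclude that $D_i$ is commutative.

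This is the cleanest route. The finite group $\bar G$ given by the image of $G$ in $D_i^\times$ spans an $\mathbb{F}_p$-subalgebra $E\subseteq D_i$. This $E$ is finite with no zero divisors, so it is a finite division ring and therefore a field by Wedderburn's little theorem. Then $D_i$ is generated by $k_i$ and the commutative $E$, which commute with each other, so $D_i$ is commutative. Hence $D_i = k_i$.

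Conclusion: $K(k[G]/J)\simeq \bigoplus_i K(k_i)$, where each $k_i$ is a finite, hence perfect, extension of $k$.

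The main point to check is that the image of $k[G]$ in $\mathrm{End}(S_i)$ actually lands in $D_i$ spanned appropriately. More precisely, the relevant summand of $k[G]/J$ is $M_{n_i}(D_i)$, not $D_i$. So I would instead apply the argument to the simple algebra $A_i=M_{n_i}(D_i)$. Its centre $k_i$ is spanned over $k$ by central elements, and $A_i \otimes_{k_i}\bar k_i$ splits. To show that $D_i$ is commutative, I would use that the Brauer class of $A_i$ is killed by a finite extension of the form $k_i\otimes_{\mathbb{F}_p}\mathbb{F}_{p^m}$. Equivalently, the class lies in the image of $\mathrm{Br}$ of the finite field $\mathbb{F}_p(\chi_i)$ (the field generated by the Brauer character values), which vanishes by Wedderburn's little theorem.

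This descent step is the expected obstacle. If it is cleaner, I would instead cite the standard fact that the endomorphism rings of simple $k[G]$-modules in characteristic $p$ are fields.
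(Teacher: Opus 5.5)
Your skeleton (Artin--Wedderburn, additivity and Morita invariance of $K$, each $D_i$ a field, perfect because finite over $k$) is the same as the paper's. Your final fallback is in fact the paper's whole proof: it simply cites the standard fact (Isaacs, Exercise 9.7) that endomorphism rings of simple $k[G]$-modules in characteristic $p$ are fields.

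The self-contained arguments you sketch for that fact, however, have a genuine gap. The ``cleanest route'' fails for the reason you noticed: $G$ maps into $M_{n_i}(D_i)^\times$, not into $D_i^\times$, so unless $n_i=1$ there is no finite subgroup of $D_i^\times$ whose span you can use. The Brauer-group repair assumes what it needs. Being killed by some finite extension is true of every Brauer class. And ``equivalently, $[A_i]$ lies in the image of $\mathrm{Br}$ of a finite field'' is not a reformulation; it is the whole claim. Splitting by $L=k_i\mathbb{F}_{p^m}$ only puts $[A_i]$ in $\mathrm{Br}(L/k_i)\cong k_i^\times/N_{L/k_i}(L^\times)$, and this can be non-zero for perfect $k_i$. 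For example, take $k_i=\mathbb{F}_p((t))^{\mathrm{perf}}$ and $1<m$ prime to $p$; comparing valuations shows $t$ is not a norm from $L$. Likewise, the vanishing of the $p$-primary part of $\mathrm{Br}(k_i)$ says nothing about Schur indices prime to $p$.

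The missing idea is to use that $k[G]$ is defined over $\mathbb{F}_p$ and descend, rather than arguing inside $\mathrm{Br}(k_i)$. Write $k[G]=k\otimes_{\mathbb{F}_p}\mathbb{F}_p[G]$ and let $J_0$ be the radical of $\mathbb{F}_p[G]$. By Artin--Wedderburn and Wedderburn's little theorem, $\mathbb{F}_p[G]/J_0\cong\prod_j M_{m_j}(\mathbb{F}_{p^{a_j}})$. Hence
\[ k[G]/(k\otimes_{\mathbb{F}_p}J_0)\cong\prod_j M_{m_j}\big(k\otimes_{\mathbb{F}_p}\mathbb{F}_{p^{a_j}}\big). \]
Since $\mathbb{F}_{p^{a_j}}/\mathbb{F}_p$ is separable, each $k\otimes_{\mathbb{F}_p}\mathbb{F}_{p^{a_j}}$ is a finite product of finite field extensions of $k$, and these are perfect. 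So $k\otimes_{\mathbb{F}_p}J_0$ is a nilpotent ideal with semisimple quotient, hence equals $J$. Therefore $k[G]/J$ is a product of matrix algebras over perfect fields. With this step, or with the citation as in the paper, your proof goes through.
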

\begin{proof}
     The semisimple algebra $k[G]/J$ splits as a product of matrix algebras over division algebras by the Artin-Wedderburn theorem, where each division algebra is the endomorphism ring of a simple representation. In positive characteristic, each such division algebra is in fact a field, by \cite[Exercise 9.7]{isaacs1994character}. It follows that $k[G]/J$ is Morita equivalent to a product of perfect fields.
\end{proof}

From Lemma \ref{artin-wed}, we may observe that $K_i(k[G]/J)$ is uniquely $p$-divisible for $i>0$ (in particular, $K(k[G]/J)^\wedge_p$ is concentrated in degree $0$). Indeed, this is the case for perfect $\mathbb{F}_p$-algebras by \cite[Theorem 5.4]{hiller1981lambda}.
\appendix

\section{Induction and Spectral Mackey Functors}

We study induction theory for invariants such as $K$-theory, $\tc$, and $\thh$ applied to group rings. By giving these invariants the structure of $G$-spectra, one can apply the higher categorical analogue of the usual induction and restriction theory for Mackey functors. We recall the construction, and show how projectivity of these spectral Mackey functors relates to projectivity of the usual $G_0$ Mackey functor. The induction results give little new information for $p$-groups, but are of great use in dealing with groups of composite order. There is some overlap between our account and \cite{vogeli2025derived}; see there for another perspective and many examples. Much of this material will be known to experts, and we make no claims of originality in the constructions described.

\subsection{Spectral Mackey functors and $G$-spectra}
We begin with a brief reminder of the Mackey functor approach to stable equivariant homotopy. Spectral Mackey functors provide a model for $G$-spectra which is particularly convenient in the context of algebraic $K$-theory, and we will recall how $K(R[G])$ and $\tc(R[G])$ may be promoted to $G$-spectra. For background on these topics, see \cite{barwick2017spectral}, \cite{barwick2019spectral}, \cite{mathew2017nilpotence}. 

For a finite group $G$, we denote by $\mathrm{Span}(\mathrm{Fin}_G)$ the category of spans of finite $G$-sets. In classical equivariant algebra, Mackey functors valued in a semi-additive category $\mathcal{C}$ may be defined as functors from $\mathrm{Span}(\mathrm{Fin}_G)$ into $\mathcal{C}$ that preserve direct sums (\cite{dress1973contributions}). To extend this approach to the setting of $\infty$-categories, one must take into account the full structure of $\mathrm{Span}(\mathrm{Fin}_G)$ as a $(2,1)$-category, with higher morphisms given by equivalences of spans. We will not dwell on the precise definition of $\mathrm{Span}(\mathrm{Fin}_G)$ as a higher category, see \cite[Definition 3.6]{barwick2017spectral}. Mackey functors with values in a semi-additive $\infty$-category $\mathcal{C}$ may then be defined as functors of $\infty$-categories out of $\mathrm{Span}(\mathrm{Fin}_G)$ (\cite[Definition 6.1]{barwick2017spectral}).

\begin{definition}\label{mack-fun}
    For a semi-additive category $\mathcal{C}$, the category of $\mathcal{C}$-valued Mackey functors is the category of direct sum preserving functors $\mathrm{Span}(\mathrm{Fin}_G) \xrightarrow{} \mathcal{C}$,
    \[ \mathrm{Mack}_G(\mathcal{C}) = \mathrm{Fun}^\oplus(\mathrm{Span}(\mathrm{Fin}_G), \mathcal{C}). \]
    We will also refer to $\mathrm{Mack}_G(\Sp)$ as the category of $G$-spectra.
\end{definition}

For $M\in \mathrm{Mack}_G(\mathcal{C})$, we write $M^H$ for $M(G/H)$ and refer to it as the $H$-fixed points of $M$, just as for usual Mackey functors. The classical presentation of Mackey functors in terms of fixed points and restriction, transfer and conjugation maps does not have an exact analogue in the $\infty$-categorical setting, due to the difficulty of specifying coherences, but is nonetheless a useful guide.

The usage of the name $G$-spectra to refer to $\mathrm{Mack}_G(\Sp)$ is justified by \cite{guillou2024models}. Definition \ref{mack-fun} may be applied to $\mathcal{C}=\Mod(R)$ for any ring spectrum $R$, and more generally to any stable category $\mathcal{C}$. However, allowing $\mathcal{C}$ to be merely semi-additive (as opposed to stable) is very useful: categories of categories tend to be semi-additive but not stable, so Definition \ref{mack-fun} can be used to talk about categorical Mackey functors.

The category $\mathrm{Span}(\mathrm{Fin}_G)$ carries a symmetric monoidal structure (\cite[Theorem 2.15]{barwick2019spectral}) via the cartesian product on $\mathrm{Fin}_G$. If $\mathcal{C}$ is symmetric monoidal, then $\mathrm{Mack}_G(\mathcal{C})$ can be equipped with a presentably symmetric monoidal structure via Day convolution (see \cite[Section 3]{barwick2019spectral}, note that one must check that Day convolution is compatible with the localisation $\mathrm{Fun}(\mathrm{Span}(\mathrm{Fin}_G), \mathcal{C}) \xrightarrow{} \mathrm{Mack}_G(\mathcal{C})$). Commutative algebras for the Day convolution symmetric monoidal structure are also known as Green functors. An alternative point of view is that such commutative algebras are equivalent to lax symmetric monoidal functors out of $\mathrm{Span}(\mathrm{Fin}_G)$.

\begin{prop}
    There is an equivalence between $\mathrm{CAlg}(\mathrm{Mack}_G(\mathcal{C}))$ and the subcategory of direct sum preserving, lax symmetric monoidal functors $\mathrm{Span}(\mathrm{Fin}_G) \xrightarrow{} \mathcal{C}$.
\end{prop}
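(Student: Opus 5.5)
The plan is to deduce this from the general fact that commutative algebras for a Day convolution structure are the same as lax symmetric monoidal functors, and then to check that the Mackey localisation is compatible with this identification.

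First, I will work in the functor category $\mathrm{Fun}(\mathrm{Span}(\mathrm{Fin}_G),\mathcal{C})$ with its Day convolution structure. By Glasman's or Lurie's result on Day convolution (\cite{lurie2012}, Example 2.2.6.9 and surrounding), $\mathrm{CAlg}$ of this category is equivalent to the category of lax symmetric monoidal functors $\mathrm{Span}(\mathrm{Fin}_G)\to\mathcal{C}$. This requires $\mathcal{C}$ presentably symmetric monoidal, with the tensor product commuting with colimits in each variable. I will assume this, as is implicit in the surrounding discussion.

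Second, I will use that $\mathrm{Mack}_G(\mathcal{C})\subseteq \mathrm{Fun}(\mathrm{Span}(\mathrm{Fin}_G),\mathcal{C})$ is a reflective subcategory. The localisation $L$ is compatible with Day convolution, as cited from \cite[Section 3]{barwick2019spectral}: if $f$ is an $L$-equivalence then so is $f\otimes X$ for any $X$. Hence by \cite[Proposition 2.2.1.9]{lurie2012}, $\mathrm{Mack}_G(\mathcal{C})$ inherits a symmetric monoidal structure $L(-\otimes-)$, with $L$ symmetric monoidal. Moreover, $\mathrm{CAlg}(\mathrm{Mack}_G(\mathcal{C}))$ identifies with the full subcategory of $\mathrm{CAlg}(\mathrm{Fun}(\mathrm{Span}(\mathrm{Fin}_G),\mathcal{C}))$ whose underlying object is local. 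This uses that the inclusion of local objects is lax symmetric monoidal, and that algebras with local underlying object in the big category are exactly algebras in the localised monoidal structure.

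Combining the two steps, $\mathrm{CAlg}(\mathrm{Mack}_G(\mathcal{C}))$ is equivalent to the lax symmetric monoidal functors whose underlying functor lies in $\mathrm{Mack}_G(\mathcal{C})$, that is, preserves direct sums. This is precisely the stated subcategory: full on objects among lax monoidal functors, with morphisms the monoidal natural transformations.

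The main obstacle is the compatibility of the localisation with Day convolution, i.e.\ showing that $L$-equivalences are stable under tensoring. I would verify it on generators. The local objects are the functors sending the maps $F(U\sqcup V)\to F(U)\oplus F(V)$ to equivalences, so $L$ inverts the maps $\mathrm{Map}(U,-)\sqcup\mathrm{Map}(V,-)\to\mathrm{Map}(U\sqcup V,-)$, suitably tensored with objects of $\mathcal{C}$. Day convolution of representables is representable on the product, $y(U)\otimes y(W)\simeq y(U\times W)$. Since $\times$ distributes over $\sqcup$ in $\mathrm{Fin}_G$, tensoring a generating equivalence with a representable again gives a generating equivalence. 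Colimit-closure then finishes the argument.
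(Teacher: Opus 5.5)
Your proposal is correct and follows the same route as the paper. The paper's proof simply cites the general fact that commutative algebras for Day convolution are lax symmetric monoidal functors \cite[Proposition 2.12]{glasman2017day}, and leaves the compatibility of the localisation $\mathrm{Fun}(\mathrm{Span}(\mathrm{Fin}_G),\mathcal{C})\to\mathrm{Mack}_G(\mathcal{C})$ with Day convolution to \cite[Section 3]{barwick2019spectral}; your generator-level check (corepresentables convolve to the corepresentable on $U\times W$, and $\times$ distributes over $\sqcup$) makes that step explicit, and you should also list the nullary condition $F(\emptyset)\simeq 0$ among the local conditions, which is handled the same way since $\emptyset\times W\cong\emptyset$.
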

\begin{proof}
    This is a general property of Day convolution, see \cite[Proposition 2.12]{glasman2017day}.
\end{proof}

The category of $\mathcal{C}$-valued Mackey functors is related to the category of objects in $\mathcal{C}$ with a $G$-action, $\mathrm{Fun}(BG, \mathcal{C})$. Indeed, restriction along $BG \xrightarrow{} \mathrm{Span}(\mathrm{Fin}_G)$ produces a functor
\[ \mathrm{Mack}_G(\mathcal{C}) \xrightarrow{} \mathrm{Fun}(BG, \mathcal{C}) \]
that sends a Mackey functor to the underlying object with $G$-action. It has both a fully faithful left adjoint and a fully faithful right adjoint, given by left and right Kan extension respectively. By \cite[Section 8]{barwick2019spectral}, the above functor is symmetric monoidal, so that the right adjoint admits a lax symmetric monoidal structure.

\begin{definition}
    Let the Borelification functor
    \[ (-)_\mathrm{Bor}: \mathrm{Fun}(BG, \mathcal{C}) \xrightarrow{} \mathrm{Mack}_G(\mathcal{C})\]
    be the right adjoint to the restriction. Dually, let the coBorelification functor
    \[ (-)_\mathrm{coBor}: \mathrm{Fun}(BG, \mathcal{C}) \xrightarrow{} \mathrm{Mack}_G(\mathcal{C}) \]
    be the left adjoint.
\end{definition}

For $X \in \mathrm{Fun}(BG, \mathcal{C})$, there is a natural comparison map
\[ X_\mathrm{coBor} \xrightarrow{} X_\mathrm{Bor} \]
which on $H$ fixed points exhibits the norm map
\[ X_{hH} \xrightarrow{} X^{hH}. \]

\begin{prop}\label{cobor-mod-bor}
    Suppose $\mathcal{C}$ is a symmetric monoidal, semi-additive category. For $X \in \mathrm{CAlg}(\mathrm{Fun}(BG, \mathcal{C}))$, $X_\mathrm{coBor}$ has the structure of a $X_\mathrm{Bor}$-module and the comparison map
    \[ X_\mathrm{coBor} \xrightarrow{} X_\mathrm{Bor} \]
    is a map of $X_\mathrm{Bor}$-modules.
\end{prop}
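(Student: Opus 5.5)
The plan is to use formal properties of the adjunctions, namely
\[
\mathrm{res} \dashv (-)_\mathrm{Bor}, \qquad (-)_\mathrm{coBor} \dashv \mathrm{res},
\]
where $\mathrm{res}: \mathrm{Mack}_G(\mathcal{C}) \to \mathrm{Fun}(BG,\mathcal{C})$ is symmetric monoidal. Since $\mathrm{res}$ is symmetric monoidal, its right adjoint $(-)_\mathrm{Bor}$ is lax symmetric monoidal, so $X_\mathrm{Bor}$ is a commutative algebra in $\mathrm{Mack}_G(\mathcal{C})$.

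For the left adjoint, a symmetric monoidal functor with a left adjoint satisfies a projection formula. The canonical map
\[
L(X \otimes \mathrm{res}(M)) \to L(X) \otimes M
\]
is adjoint to
\[
X\otimes \mathrm{res}M \to \mathrm{res}L(X)\otimes \mathrm{res}M,
\]
induced by the unit. This makes $(-)_\mathrm{coBor}$ a module functor over $\mathrm{res}$: it is $\mathrm{Mack}_G(\mathcal{C})$-linear, where $\mathrm{Fun}(BG,\mathcal{C})$ is regarded as a $\mathrm{Mack}_G(\mathcal{C})$-module via $\mathrm{res}$.

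The key steps are as follows.

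\emph{Step 1.} Take $M = X_\mathrm{Bor}$. The counit $\mathrm{res}(X_\mathrm{Bor}) \to X$ is a map of commutative algebras, because the lax structure on the right adjoint is the mate of the strong structure. Hence $X$ becomes a module over $\mathrm{res}(X_\mathrm{Bor})$ in $\mathrm{Fun}(BG,\mathcal{C})$, by restriction of scalars along an algebra map.

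\emph{Step 2.} Apply the oplax-linear left adjoint. A left adjoint of a symmetric monoidal functor is oplax $\mathrm{Mack}_G(\mathcal{C})$-linear, via the projection map of the first paragraph. Such a functor sends $\mathrm{res}(A)$-modules to $A$-modules. The action map is
\[
X_\mathrm{coBor}\otimes A \leftarrow (X\otimes \mathrm{res}A)_\mathrm{coBor} \to X_\mathrm{coBor}.
\]
Here the left arrow points the wrong way unless the projection map is invertible. So I would either prove the projection formula is an equivalence, or use an alternative construction.

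The alternative is to work internally to $\mathrm{Fun}(BG,\mathcal{C})$. One shows that for any $N$, the module categories $\mathrm{LMod}_{\mathrm{res}A}(\mathrm{Fun}(BG,\mathcal{C}))$ and $\mathrm{LMod}_A(\mathrm{Mack}_G(\mathcal{C}))$ are related by an adjunction lifting $(L,\mathrm{res})$. This is Lurie's general result, HA 7.3.2, on adjoints of relative module functors: given the lax-linear right adjoint $\mathrm{res}$, its left adjoint inherits a lax-linear structure precisely when the projection map is an equivalence.

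\emph{Step 3} is therefore the main obstacle: verifying the projection formula
\[
(X\otimes \mathrm{res}M)_\mathrm{coBor}\simeq X_\mathrm{coBor}\otimes M.
\]
I would check it on generators. Both sides preserve colimits in $X$ and in $M$; for $M$, the Day convolution commutes with colimits. It therefore suffices to treat $X = \Sigma^\infty_+ G\otimes c$, which gives $X_\mathrm{coBor} = $ the representable $\mathrm{Span}(G/e,-)\otimes c$, together with representable $M$. For these the formula reduces to the Mackey double coset identity
\[
G/e\times G/H\cong \coprod G/e.
\]
In the semi-additive (non-stable) case, I would instead argue directly with spans.

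\emph{Step 4.} Finally, show that the comparison $X_\mathrm{coBor}\to X_\mathrm{Bor}$ is a module map. It is adjoint to the identity of $X$, through $\mathrm{res}X_\mathrm{Bor}\to X$. By adjunction in module categories, maps of $X_\mathrm{Bor}$-modules
\[
X_\mathrm{coBor}\to X_\mathrm{Bor}
\]
correspond to $\mathrm{res}X_\mathrm{Bor}$-module maps
\[
X\to \mathrm{res}X_\mathrm{Bor}\to\cdots
\]
More simply, maps $X_\mathrm{coBor}\to N$ correspond to maps $X\to\mathrm{res}N$. Taking $N = X_\mathrm{Bor}$, which is a module over itself, one needs a module map $X\to \mathrm{res}X_\mathrm{Bor}$. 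Since $G$ acts on $X$ and $\mathrm{res}$ is fully faithful on the image of $\mathrm{Bor}$, we have $\mathrm{res}X_\mathrm{Bor}\simeq X$. The identity is then a module map, and its adjoint is the norm comparison.
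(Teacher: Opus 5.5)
Your argument is correct in outline, but it takes a different route from the paper.

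\textbf{The paper's route.} The paper's proof is essentially one line. It identifies $X_\mathrm{coBor}\simeq EG\otimes X_\mathrm{Bor}$, using that $\mathrm{Mack}_G(\mathcal{C})$ is tensored over $G$-spaces. This exhibits $X_\mathrm{coBor}$ directly as an $X_\mathrm{Bor}$-module, and exhibits the comparison map as $X_\mathrm{Bor}\otimes(EG\to *)$, which is automatically a module map.

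\textbf{Your route.} You instead prove the projection formula $(Y\otimes\mathrm{res}\,M)_\mathrm{coBor}\simeq Y_\mathrm{coBor}\otimes M$. This makes $(-)_\mathrm{coBor}$ strongly $\mathrm{Mack}_G(\mathcal{C})$-linear, so the adjunction lifts to module categories. You then obtain the comparison map as the adjoint of the module equivalence $X\simeq\mathrm{res}\,X_\mathrm{Bor}$.

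\textbf{Relation between the two.} The non-formal input is essentially the same in both. Your projection formula with $Y=\mathbf{1}$ and $M=X_\mathrm{Bor}$ gives $X_\mathrm{coBor}\simeq\mathbf{1}_\mathrm{coBor}\otimes X_\mathrm{Bor}\simeq EG\otimes X_\mathrm{Bor}$. Conversely, the paper's formula, applied to arbitrary $Y$, yields the projection formula, because $EG\otimes-$ inverts underlying equivalences.

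\textbf{What each buys.} Your route isolates this input and indicates how to check it, on free objects and representables. The paper simply asserts both the $EG$-formula and the identification of the comparison map. The paper's route is much shorter and avoids lifting adjunctions to module categories.

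\textbf{Two small remarks.} First, the separate ``argue directly with spans'' for non-stable $\mathcal{C}$ is unnecessary. Your generator reduction works verbatim for presentable semi-additive $\mathcal{C}$, since bar resolutions by free objects exist in both $\mathrm{Fun}(BG,\mathcal{C})$ and $\mathrm{Mack}_G(\mathcal{C})$. The paper tacitly needs some presentability as well. Second, in Step 4 the correct reason for $\mathrm{res}\,X_\mathrm{Bor}\simeq X$ is that $(-)_\mathrm{Bor}$ is fully faithful, so the counit is an equivalence.
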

\begin{proof}
    There is an equivalence
    \[ X_{\mathrm{coBor}} \simeq EG \otimes X_\mathrm{Bor} \]
    where we use that $\mathrm{Mack}_G(\mathcal{C})$ is tensored over the category of $G$-spaces. Hence $X_{\mathrm{coBor}}$ has the structure of a $X_\mathrm{Bor}$-module. Moreover, the comparison map $ X_\mathrm{coBor} \xrightarrow{} X_\mathrm{Bor}$ is simply the tensor of $X_\mathrm{Bor}$ with the canonical map $EG \xrightarrow{}*$, so is a map of $X_\mathrm{Bor}$-modules.
\end{proof}

\subsection{$G$-spectra from categorical invariants}
We now explain how to obtain $G$-spectra from categorical invariants such as algebraic $K$-theory and $\thh$. The most straightforward approach is first to construct Mackey functors with values in a category of categories, then to postcompose with a categorical invariant to decategorify and obtain $G$-spectra. We will explain how such techniques can be applied in the simple case of studying representations over a ring $R$. For a more general construction, see \cite[Section 2]{clausen2020descent}.

\begin{definition}
    Let $\mathrm{Cat}^\mathrm{perf}$ denote the category of small, stable, idempotent complete categories and exact functors between them.
\end{definition}

Suppose $R$ is a commutative ring. We may view $\mathrm{Mod}(R)^\omega$ as an object in $\mathrm{Fun}(BG, \mathrm{Cat}^\mathrm{perf})$ by equipping it with the trivial $G$-action. We then obtain the Borelification (denoted $\mathrm{Mod}(R)^\omega_\mathrm{Bor}$) and the coBorelification (denoted $\mathrm{Mod}(R)^\omega_\mathrm{coBor}$). By Proposition \ref{cobor-mod-bor}, $\mathrm{Mod}(R)^\omega_\mathrm{Bor}$ is a commutative algebra in $\mathrm{Cat}^\mathrm{perf}$ and $\mathrm{Mod}(R)^\omega_\mathrm{coBor}$ is a $\mathrm{Mod}(R)^\omega_\mathrm{Bor}$-module.

The fixed points of $\mathrm{Mod}(R)^\omega_\mathrm{Bor}$ have a very simple description. Indeed, by \cite[Theorem 1.1.4.4]{lurie2012} limits in $\mathrm{Cat}^\mathrm{perf}$ are computed as the limit in $\mathrm{Cat}$, so we see that
\[ (\mathrm{Mod}(R)^\omega_\mathrm{Bor})^H\simeq (\mathrm{Mod}(R)^\omega)^{hH}\simeq \mathrm{Fun}(BH, \mathrm{Mod}(R)^\omega). \]
We write $\mathrm{Rep}_R(G)$ for $\mathrm{Fun}(BH, \mathrm{Mod}(R)^\omega)$, and refer to it as the category of $G$ representations over $R$. The category $\mathrm{Rep}_R(G)$ is symmetric monoidal, with tensor unit the trivial representation.  Note that if $R$ is a field, then $\mathrm{Rep}_R(G)$ is simply the homotopy category of bounded chain complexes of finite dimensional $R[G]$-modules. It is not hard to see that the restriction and transfer maps in the Mackey functor $\mathrm{Rep}_R(-)=\mathrm{Mod}(R)^\omega_\mathrm{Bor}$ arise from the usual restriction and induction of representations. 

Similarly, we can describe the fixed points of $\mathrm{Mod}(R)^\omega_\mathrm{coBor}$. By \cite[Example 2.17]{clausen2020descent}, for $\mathcal{C} \in \mathrm{Fun}(BG,\mathrm{Cat}^\mathrm{perf})$, the homotopy orbits $\mathcal{C}_{hG}$ are described as the full subcategory of compact objects in $\mathrm{Ind}(\mathcal{C})^{hG}$, so that 
\[ (\mathrm{Mod}(R)^\omega_\mathrm{coBor})^H\simeq (\mathrm{Mod}(R)^\omega)_{hH}\simeq \mathrm{Fun}(BH, \mathrm{Mod}(R))^\omega\simeq \mathrm{Mod}(R[G])^\omega. \]
Again, the structure of the Mackey functor comes from restriction and induction of $R[G]$-modules. Note that although $\mathrm{Mod}(R[G])$ is symmetric monoidal, $\mathrm{Mod}(R[G])^\omega$ does not inherit a unital monoidal structure, since the unit of $\mathrm{Mod}(R[G])$ is not compact.

By \cite{clausen2020descent}, the norm map
\[ (\mathrm{Mod}(R)^\omega_\mathrm{coBor})^H\simeq \mathrm{Fun}(BH, \mathrm{Mod}(R))^\omega \xrightarrow{} \mathrm{Fun}(BH, \mathrm{Mod}(R)^\omega) \simeq (\mathrm{Mod}(R)^\omega_\mathrm{Bor})^H \]
agrees with the fully faithful inclusion $\mathrm{Fun}(BH, \mathrm{Mod}(R))^\omega \subseteq \mathrm{Fun}(BH, \mathrm{Mod}(R)^\omega)$. From this perspective, the fact that $\mathrm{Mod}(R)^\omega_\mathrm{coBor}$ is a $\mathrm{Mod}(R)^\omega_\mathrm{Bor}$ algebra witnesses the fact that $\mathrm{Fun}(BH, \mathrm{Mod}(R))^\omega$ is a tensor ideal of $\mathrm{Fun}(BH, \mathrm{Mod}(R)^\omega)$.

For ease of notation, we make the following definition.
\begin{definition}
    For $R \in \mathrm{CAlg}(\mathrm{Sp})$ and $E:\mathrm{Cat}^\mathrm{perf} \xrightarrow{} \mathcal{C}$ a product preserving functor, let $E(R[-])$ denote the $\mathcal{C}$-valued Mackey functor
    \[ E(R[-]) = E(\mathrm{Mod}(R)^\omega_\mathrm{coBor}).\]
    Let $E(\mathrm{Rep}_R(-))$ denote the $\mathcal{C}$-valued Mackey functor
    \[ E(\mathrm{Rep}_R(-)) = E(\mathrm{Mod}(R)^\omega_\mathrm{Bor}).\]
\end{definition}

\begin{example}
    Consider $\thh$ as a functor $\mathrm{Cat}^\mathrm{perf} \xrightarrow{} \mathrm{CycSp}$ . Then we obtain two Mackey functors
    \[ \thh(R[-]), \thh(\mathrm{Rep}_R(-)) \]
    valued in cyclotomic spectra.
\end{example}

\subsection{Induction via $G$-spectra}
For an invariant $E$, we would like to determine the defect base of the Mackey functors $E_n(R[-])$ (in the sense of \cite{green1971axiomatic}). An upper bound for the defect base of $E_n(R[-])$ can be given by determining the defect base of the spectral Mackey functor $E(R[-])$. Moreover, we have seen that $E(R[-])$ is a module over the spectral Green functor $E(\mathrm{Rep}_R(G))$, so in fact an upper bound is given by the defect base of $E(\mathrm{Rep}_R(G))$. When $E$ receives a trace map from $K$-theory, the defect base of $E(\mathrm{Rep}_R(G))$ is related to the defect base of $K(\mathrm{Rep}_R(G))$, which is also referred to as the $G$-theory of $R[G]$.

\begin{definition}
    For $\mathcal{F}$ a family of subgroups, we say that $R$-based $G$-theory induction holds for $\mathcal{F}$ if the Green functor $K_0(\mathrm{Rep}_R(-))$ is $\mathcal{F}$-projective.
\end{definition}

\begin{prop}\label{E-inv-defect}
    Let $E:\mathrm{Cat}^\mathrm{perf}\xrightarrow{} \mathrm{Sp}$ be a lax symmetric monoidal, product preserving functor, and suppose that there is a map $K \xrightarrow{} E$ of lax symmetric monoidal functors from algebraic $K$-theory. For $R \in \mathrm{CAlg}(\Sp)$ and $\mathcal{F}$ a family of subgroups such that $R$-based $G$-theory induction holds for $\mathcal{F}$, then the Green functor $E(R[-])$ is $\mathcal{F}$-projective.
\end{prop}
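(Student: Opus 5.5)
The plan is to use the standard criterion that a module over an $\mathcal{F}$-projective Green functor is itself $\mathcal{F}$-projective, and to move projectivity along the ring maps
\[ K(\mathrm{Rep}_R(-)) \xrightarrow{} E(\mathrm{Rep}_R(-)) \quad\text{and}\quad E(\mathrm{Rep}_R(-)) \curvearrowright E(R[-]). \]
So there are three steps: establish $\mathcal{F}$-projectivity of the spectral Green functor $K(\mathrm{Rep}_R(-))$, transfer it to $E(\mathrm{Rep}_R(-))$, and then transfer it to the module $E(R[-])$.

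\textbf{Structures.} Since $E$ is lax symmetric monoidal and product preserving, postcomposition sends $\mathrm{CAlg}(\mathrm{Mack}_G(\mathrm{Cat}^\mathrm{perf}))$ to $\mathrm{CAlg}(\mathrm{Mack}_G(\Sp))$, and it sends modules to modules. By Proposition \ref{cobor-mod-bor} and the surrounding discussion, $\mathrm{Mod}(R)^\omega_\mathrm{Bor}$ is a commutative algebra and $\mathrm{Mod}(R)^\omega_\mathrm{coBor}$ is a module over it. Hence $E(\mathrm{Rep}_R(-))$ is a Green functor and $E(R[-])$ is a module over it. Naturality of the lax symmetric monoidal transformation $K \to E$ gives a map of Green functors $K(\mathrm{Rep}_R(-)) \to E(\mathrm{Rep}_R(-))$.

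\textbf{Projectivity criterion and the algebraic input.} For a Green functor $A$ in $\mathrm{Mack}_G(\Sp)$, I will use the criterion that $A$ is $\mathcal{F}$-projective (equivalently, lies in the thick ideal generated by $G/H_+$ for $H \in \mathcal{F}$) if and only if the unit $1 \in \pi_0 A^G$ lies in the sum of the images of the transfers $\pi_0 A^H \to \pi_0 A^G$ for $H \in \mathcal{F}$. This is the spectral version of the Dress/Green criterion, as in \cite{mathew2017nilpotence}, phrased via the $\mathcal{F}$-nilpotence of $A$. The criterion only involves $\pi_0$, and $\pi_0$ of the Green functor $K(\mathrm{Rep}_R(-))$ is exactly $K_0(\mathrm{Rep}_R(-))$. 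By hypothesis the latter is $\mathcal{F}$-projective, so $1 = \sum_i \mathrm{tr}_{H_i}(x_i)$ with $H_i \in \mathcal{F}$.

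\textbf{Transfer to $E$ and to the module.} The Green functor map sends $1$ to $1$ and commutes with transfers, so the same relation holds in $\pi_0 E(\mathrm{Rep}_R(-))^G$. Hence $E(\mathrm{Rep}_R(-))$ is $\mathcal{F}$-projective. Finally, for any module $M$ over an $\mathcal{F}$-projective Green functor $A$, the module is a retract of $A \otimes M$. By the projection formula, $A \otimes M$ is built from $G/H_+ \otimes M$ for $H \in \mathcal{F}$, so $M$ is $\mathcal{F}$-projective. Explicitly, multiplication by the relation $1 = \sum \mathrm{tr}(x_i)$ splits $M \to \prod_i \mathrm{Ind}_{H_i}\mathrm{Res}_{H_i} M$. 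Applied to $M = E(R[-])$, this gives the claim.

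\textbf{Main obstacle.} The delicate step is justifying the $\pi_0$ criterion in the $\infty$-categorical Mackey setting. The key point is that $\mathcal{F}$-projectivity of a ring object is detected by the unit lying in the ideal generated by transfers from $\mathcal{F}$. The "if" direction, which is the one needed here, comes from the splitting built from $\sum \mathrm{tr}(x_i)$ and a Frobenius reciprocity argument in homotopy. I would cite \cite{mathew2017nilpotence}, or construct the splitting directly using the module structure over $\mathbb{S}$ and the projection formula.
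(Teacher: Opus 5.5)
Your proposal is correct and follows essentially the paper's route. The paper invokes \cite[Proposition 4.4]{mathew2019derived}, which is the spectral form of your unit-in-transfers criterion, to pass from $K_0(\mathrm{Rep}_R(-))$ to $K(\mathrm{Rep}_R(-))$, and then notes that $E(R[-])$ is a $K(\mathrm{Rep}_R(-))$-module via $E(\mathrm{Rep}_R(-))$; pushing the relation $1=\sum_i \mathrm{tr}_{H_i}(x_i)$ to $E(\mathrm{Rep}_R(-))$ first, as you do, is an inessential reordering.

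One correction: your parenthetical identifying $\mathcal{F}$-projectivity with membership in the thick ideal generated by the $G/H_+$ is wrong. That is $\mathcal{F}$-nilpotence, which is strictly weaker and is not characterised by the unit criterion. Rely instead on the explicit retraction of $M$ off $\prod_i \mathrm{Ind}_{H_i}\mathrm{Res}_{H_i} M$ that you construct, since that retract form is what is needed later to conclude that each $\pi_n$ is an $\mathcal{F}$-projective Mackey functor.
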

\begin{proof}
    By assumption, the Mackey functor $K_0(\mathrm{Rep}_R(-))$ is $\mathcal{F}$-projective,
    which implies that the spectral Mackey functor $K(\mathrm{Rep}_R(-))$ is also $\mathcal{F}$-projective by \cite[Proposition 4.4]{mathew2019derived}. The existence of the map $K \xrightarrow{} E$ implies that $E(\mathrm{Rep}_R(-))$ is an algebra in $G$-spectra over $K(\mathrm{Rep}_R(-))$, and $E(R[-])$ is a module over $E(\mathrm{Rep}_R(-))$ by Proposition \ref{cobor-mod-bor}. Hence $E(R[-])$ is a module over $K(\mathrm{Rep}_R(-))$, so is also $\mathcal{F}$-projective.
\end{proof}

Proposition \ref{E-inv-defect} applies to algebraic $K$-theory itself, as well as the trace invariants ($\tc$, $\thh$ etc) that show up in the study of $K$-theory. When $R$ is a field (or more generally an algebra over a field), then Brauer's induction theorem and its relatives describe the defect base of $R$-based $G$-theory. Recall that a group is hyperelementary if it is a semi-direct product $C_n\rtimes Q$, where $Q$ is a $q$-group for some prime $q$ and $C_n$ is the cyclic group of order $n$, where $q \nmid n$. A hyperelementary group is elementary if it is a direct product $C_n \times Q$.

\begin{cor}\label{trace-inv-defect}
    Let $R$ be an $\mathbb{F}_p$-algebra. Then the Mackey functors
    \[ K_n(R[-]), \tc_n(R[-]), \mathrm{TR}_n(R[-]), \thh_n(R[-]) \]
    are all $\mathcal{F}$-projective for $\mathcal{F}$ the family of hyperelementary subgroups.

    If $R$ is an algebra over $\mathbb{F}_q$, where $\mathbb{F}_q$ is a splitting field for $G$, then the above are all $\mathcal{F}$-projective for $\mathcal{F}$ the family of elementary subgroups.
\end{cor}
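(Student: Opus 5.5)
The plan is to deduce this from Proposition \ref{E-inv-defect}. That reduces the corollary to two inputs: each invariant must receive a lax symmetric monoidal map from $K$-theory, and $G$-theory induction must hold for the relevant family.

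\emph{Step 1: the invariants.} Each of $K$, $\tc$, $\mathrm{TR}$, $\thh$ is a lax symmetric monoidal, product preserving functor on $\mathrm{Cat}^\mathrm{perf}$. For $\thh$ and $\mathrm{TR}$ we work with the underlying spectra (for $\thh$ we forget the cyclotomic structure). The cyclotomic trace $K \to \tc$ is a map of lax symmetric monoidal functors, since it is corepresented by the unit in the Blumberg--Gepner--Tabuada framework. Composing it with $\tc \to \mathrm{TR} \to \thh$ gives the remaining maps. Each of these arises from a lax symmetric monoidal transformation: $\tc$ and $\thh$ are related through the cyclotomic Frobenius structure, and $\mathrm{TR}$ is a limit of lax symmetric monoidal functors. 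Applying Proposition \ref{E-inv-defect} then shows that each spectral Mackey functor $E(R[-])$ is $\mathcal{F}$-projective. The homotopy Mackey functors $E_n(R[-])$ are then $\mathcal{F}$-projective as well: the projectivity is witnessed by $E(R[-])$ being a retract of $\bigoplus_{H\in\mathcal{F}}\mathrm{Ind}_H^G\mathrm{Res}^G_H E(R[-])$, and applying $\pi_n$ preserves retracts and induction.

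\emph{Step 2: $G$-theory induction.} It remains to show that $K_0(\mathrm{Rep}_R(-))$ is $\mathcal{F}$-projective for the stated families. By Dress's theory of Green functors, this holds exactly when the unit lies in the image of the sum of the induction maps from subgroups in $\mathcal{F}$, that is, when $1 = \sum_{H\in\mathcal{F}} \mathrm{ind}_H^G(x_H)$ in $K_0(\mathrm{Rep}_R(G))$. The Green functor $K_0(\mathrm{Rep}_R(-))$ receives a map of Green functors from $K_0(\mathrm{Rep}_{\mathbb{F}_p}(-))$ (respectively $K_0(\mathrm{Rep}_{\mathbb{F}_q}(-))$) by base change. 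A map of Green functors preserves units and commutes with induction, so it suffices to treat $R=\mathbb{F}_p$ (respectively $\mathbb{F}_q$).

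For $\mathbb{F}_p$ we use the modular (Brauer character) version of the induction theorems. Brauer's theorem for $G_0$ over a field of characteristic $p$ states that $1$ is a $\mathbb{Z}$-combination of modules induced from hyperelementary subgroups; this is the Swan/Conlon form of the induction theorem for $G_0(k[G])$ over an arbitrary field. When the field is a splitting field, the same theorem yields elementary subgroups, since Brauer characters of a splitting field behave as in the classical case. This is the one genuinely external input. I would cite it (for instance Serre's \emph{Linear representations}, Part III, or Curtis--Reiner) rather than reprove it. The main point to check is that the cited form really concerns $K_0$ of all finitely generated modules, i.e.\ $G_0$, over a \emph{non-split} field of characteristic $p$, and that it gives hyperelementary subgroups there.

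\emph{Step 3: the main obstacle.} I expect the main difficulty to lie in Step 1, namely the coherence that makes $\tc$, $\mathrm{TR}$ and $\thh$ algebras over $K(\mathrm{Rep}_R(-))$ as spectral Green functors. Concretely, one needs the trace to be lax symmetric monoidal as a natural transformation on $\mathrm{Cat}^\mathrm{perf}$. I would take this from the universal property of the trace, which exhibits it as a map of lax symmetric monoidal functors, and not attempt to construct it by hand.
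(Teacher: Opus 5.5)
Your proposal follows the paper's proof. You pass from the spectral Mackey functor to its homotopy Mackey functors by additivity, and you use Proposition~\ref{E-inv-defect} together with the lax symmetric monoidal trace. You then reduce $G$-theory induction to the prime field by extension of scalars: the paper phrases this as $K_0(\mathrm{Rep}_R(-))$ being a module over $K_0(\mathrm{Rep}_{\mathbb{F}_p}(-))$, and your unit-preserving map of Green functors is the same argument. The hyperelementary case is fine as you set it up. For the input you flagged, one clean route is Solomon's identity $1=\sum_H a_H\,\mathrm{Ind}_H^G 1$ in $R_{\mathbb{Q}}(G)$ over hyperelementary $H$, pushed into $G_0(\mathbb{F}_p[G])$ by reducing the permutation lattices $\mathbb{Z}_p[G/H]$.

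The genuine gap is your justification of the elementary case. Being a splitting field for $G$ does not make Brauer's elementary induction go through, because the classes $x_H$ in $1=\sum\mathrm{Ind}_H^G x_H$ must be realised over the field for the elementary subgroups $H$, not only for $G$. Concretely, $\mathbb{F}_5$ is a splitting field for $S_3$, since $\mathbb{F}_5[S_3]\cong\mathbb{F}_5\times\mathbb{F}_5\times M_2(\mathbb{F}_5)$. However, $\mathbb{F}_5[C_3]\cong\mathbb{F}_5\times\mathbb{F}_{25}$. In the basis $1,\epsilon,\chi$ of $K_0(\mathbb{F}_5[S_3])$, induction from $1$, $C_2$, $C_3$ has image spanned by $1+\epsilon$, $2\chi$, $1+\chi$, $\epsilon+\chi$ and $1+\epsilon+2\chi$. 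All of these are killed by $(a,b,c)\mapsto a+b+c \bmod 2$, while the unit is not. Since all these group algebras are semisimple, $K_0(\mathbb{F}_5[-])$ is therefore not projective relative to elementary subgroups.

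The paper's own proof only treats the first case, so this is really a defect in the hypothesis of the statement. You need $\mathbb{F}_q$ to be a splitting field for every subgroup, equivalently to contain the $m$-th roots of unity for $m$ the $p'$-part of the exponent of $G$. Under that hypothesis the argument runs as follows. Take a finite extension $K$ of $W(\mathbb{F}_q)[1/p]$ with residue field $\mathbb{F}_q$ containing all $|G|$-th roots of unity; adjoining the $p$-power roots is totally ramified, so the residue field is unchanged. Brauer's theorem gives $1=\sum\mathrm{Ind}_H^G x_H$ in $R_K(G)$ with $H$ elementary. The decomposition map $R_K(-)\to K_0(\mathrm{Rep}_{\mathbb{F}_q}(-))$ is a unit-preserving map of Green functors commuting with induction, so it transports this identity.
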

\begin{proof}
    Note that the property of being $\mathcal{F}$-projective is preserved under any product preserving functor, so for $E$ an $\mathcal{F}$-projective $G$-spectrum, the homotopy groups $\pi_n(E)$ are $\mathcal{F}$-projective Mackey functors. Hence it suffices to show that the relevant $G$-spectra are $\mathcal{F}$-projective. The functors $\tc$, $\mathrm{TR}$, and $\thh$ all receive trace maps from $K$-theory (\cite{madsen1995algebraic}), so by Proposition \ref{E-inv-defect}, it only remains to show that the relevant $R$-based $G$-theory induction holds. In the first case, $K_0(\mathrm{Rep}_R(-))$ is a module over $K_0(\mathrm{Rep}_{\mathbb{F}_p}(-))$ via extension of scalar. The Green functor $K_0(\mathrm{Rep}_{\mathbb{F}_p}(-))$ is projective with respect to the class of hyperelementary subgroups, see \cite{dress1973contributions}.
\end{proof}

\begin{example}
    Let $R$ be an $\mathbb{F}_p$-algebra. By Corollary \ref{trace-inv-defect}, we have decompositions
    \begin{align*}
        \tc_i(R[G]) &\simeq \underset{\mathcal{O}_{\mathcal{F}}(G)}{\colim} \ \tc_i(R[H]) \\
        \thh_i(R[G]) &\simeq \underset{\mathcal{O}_{\mathcal{F}}(G)}{\colim} \ \thh_i(R[H])
    \end{align*}
    where $\mathcal{F}$ is the family of hyperelementary subgroups.
\end{example}

\subsection{Brown's complex}
We now show that Corollary \ref{trace-inv-defect} implies that $K_i(R[G])$ is a $p$-local invariant for $i>0$, when $R$ is a perfect $\mathbb{F}_q$-algebra. By a $p$-local invariant, we mean an invariant that is determined by its values on subgroups that contain a normal $p$-subgroup. Via \cite[Theorem A]{webb1991split}, the theory of such invariants is related to Brown's simplicial complex of non-trivial $p$-subgroups.

\begin{definition}
    Let $\Delta_p(G)$ denote the $G$-simplicial complex given by the geometric realisation of the poset of non-trivial $p$-subgroups of $G$.
\end{definition}

The complex $\Delta_p(G)$ has been much studied (\cite{brown1975euler}, \cite{brown1976high}, \cite{quillen1978homotopy}, \cite{grodal2002higher}). A simplex in $\Delta_p(G)$ is a chain $\sigma = P_0 < P_1 < \ldots < P_n$ where the $P_i$ are non-trivial $p$-subgroups. The stabiliser of $\sigma$ is then given by
\[ G_\sigma=N(P_0 < P_1 < \ldots < P_n) = N(P_0)\cap N(P_1)\cap\ldots\cap N(P_n). \]
Any such normaliser contains the non-trivial $p$-subgroup $P_0$ as a normal subgroup, so that $O_p(G_\sigma)\neq 1$ for any vertex $\sigma$. The simplicial complex $\Delta_p(G)$ is thus built from $p$-local information in a certain sense.

\begin{prop}\label{p-local}
    Let $R$ be a discrete, perfect algebra over $\mathbb{F}_{p^n}$, where $\mathbb{F}_{p^n}$ is a splitting field for $G$. For each $i>0$, there is a split exact sequence
    \begin{align*}
    0 \to \tc_i(R[G];p)^\wedge_p \to \bigoplus_{\sigma \in (\Delta_p(G))_0/G} \tc_i(R[G_\sigma];p)^\wedge_p 
    &\to \bigoplus_{\sigma \in (\Delta_p(G))_1/G} \tc_i(R[G_\sigma];p)^\wedge_p \\
    &\to \bigoplus_{\sigma \in (\Delta_p(G))_2/G} \tc_i(R[G_\sigma];p)^\wedge_p \to \dots
    \end{align*}    
    where $\Delta_p(G)_n$ denotes the set of $n$-simplices of $\Delta_p(G)$.

    In particular, if $G$ contains a strongly $p$-embedded subgroup $H$, then $\tc_i(R[G];p)^\wedge_p \simeq \tc_i(R[H];p)^\wedge_p$.
\end{prop}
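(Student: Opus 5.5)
The plan is to apply Webb's theorem (\cite[Theorem A]{webb1991split}) to the Mackey functors $M_i := \tc_i(R[-];p)^\wedge_p$ for $i>0$. Webb's theorem says the following. Let $M$ be a Mackey functor for $G$ over $\mathbb{Z}_{(p)}$ that is projective relative to the family of $p$-hyperelementary (or more generally $p$-local type) subgroups, and that vanishes on subgroups $H$ with $O_p(H)=1$ (more precisely, $M$ is projective relative to subgroups with a non-trivial normal $p$-subgroup). Then the chain complex
\[ 0\to M(G)\to \bigoplus_{\sigma\in \Delta_p(G)_0/G} M(G_\sigma)\to \bigoplus_{\sigma\in\Delta_p(G)_1/G} M(G_\sigma)\to\cdots \]
built from restriction maps (with the usual alternating signs, using $M(G_\sigma)$ in place of $H^0(G_\sigma;-)$) is split exact. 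The proof therefore reduces to verifying Webb's hypotheses for $M_i$.

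\textbf{Step 1: relative projectivity.} Since $\mathbb{F}_{p^n}$ is a splitting field and $R$ is an $\mathbb{F}_{p^n}$-algebra, Corollary \ref{trace-inv-defect} shows that $\tc_i(R[-])$ is projective relative to the family of elementary subgroups $C_m\times Q$. After $p$-completion, and since $M_i$ is a module over $\mathbb{Z}_p$, we may discard the elementary subgroups with $Q$ a $q$-group for $q\neq p$. The standard argument is that induction from the subgroups $C_m\times Q$ with $q\ne p$ can be absorbed into induction from cyclic subgroups, because the index-prime-to-$p$ transfer–restriction composite is invertible. So $M_i$ is projective relative to $p$-elementary subgroups $E=C_m\times P$ with $p\nmid m$ and $P$ a $p$-group.

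\textbf{Step 2: vanishing when $P$ is trivial.} For $E=C_m$ with $p\nmid m$, the ring $R[C_m]$ is a product of perfect $\mathbb{F}_p$-algebras, since $m$ is invertible and the splitting field contains the relevant roots of unity. Hence $\tc_i(R[C_m];p)=0$ for $i>0$ by \cite[Corollary IV.4.10]{nikolaus2018topological}. Any $M_i$ projective relative to a family is then projective relative to the subfamily on which it does not vanish, namely the $p$-elementary groups with $P\ne 1$. Each such group has $O_p\neq 1$ and so is $p$-local. This places us exactly in the hypotheses of Webb's theorem, which gives split exactness of the displayed sequence. The maps in the sequence are restrictions, which agree with the maps induced on $\tc$ by the inclusions $G_\sigma\le G$, up to the Mackey structure identification made in the appendix.

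\textbf{Step 3: the strongly $p$-embedded case.} If $H$ is strongly $p$-embedded, then $\Delta_p(G)$ is $G$-homotopy equivalent to the discrete $G$-set $G/H$ (Quillen). Applying the split exact sequence to a complex $G$-homotopy equivalent to $\Delta_p(G)$ (Webb's theorem is invariant under such equivalences), or equivalently applying it directly to $G/H$, gives $M_i(G)\cong M_i(H)$.

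The main obstacle is Step 1's passage from hyperelementary/elementary projectivity to a form Webb's theorem accepts, in particular controlling the $q$-parts for $q\ne p$ after $p$-completion. A secondary point is checking that $p$-completion commutes with the relevant Mackey structure; it does, since each $M_i$ is a bounded-exponent $p$-group by \cite{geisser2011relative}.
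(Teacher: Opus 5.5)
Your overall strategy matches the paper's. You apply Webb's Theorem A with $\mathcal{X}$ the elementary subgroups (Corollary \ref{trace-inv-defect}) and $\mathcal{Y}$ the $p'$-subgroups, and use that $\Delta_p(G)^E$ is contractible when $O_p(E)\neq 1$. However, Step 1, which you single out as the main obstacle, rests on a false claim and is also unnecessary.

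\textbf{Step 1.} The Mackey functor $M_i$ is not cohomological. So $\mathrm{tr}^E_K\mathrm{res}^E_K$ is not multiplication by $[E:K]$, even when the index is prime to $p$. It is induced by the endofunctor $R[E/K]\otimes_R -$, i.e.\ by Frobenius reciprocity it is multiplication by $[R[E/K]]$ in the module structure over $K_0(\mathrm{Rep}_R(E))$. A counterexample: take $E=C_p\times C_q$, $K=C_p$, and $\mathbb{F}_{p^n}$ containing the $q$-th roots of unity. Then $\tc_i(R[E];p)\cong\bigoplus_{\chi\in\widehat{C_q}}\tc_i(R[C_p];p)$, and each $\chi\otimes-$ permutes the blocks. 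Hence $\mathrm{tr}\circ\mathrm{res}$ acts by the all-ones $q\times q$ matrix, which has a kernel in odd positive degrees.

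The paper never needs this reduction. An elementary group $C_m\times Q$ is nilpotent, so if $p$ divides its order it is $P\times H'$ with $P\neq 1$ a normal $p$-subgroup. Then $\Delta_p(G)^E$ is already contractible, and no $q$-elementary subgroups have to be discarded. What you need instead is vanishing of $M_i$ on every $p'$-subgroup, not only on cyclic ones. This holds because $R[H]\cong R\otimes_{\mathbb{F}_p}\mathbb{F}_p[H]$ is Morita equivalent to a product of perfect $\mathbb{F}_p$-algebras whenever $p\nmid |H|$.

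This also repairs your appeal to roots of unity. A splitting field for $G$ need not contain $m$-th roots of unity for $C_m\le G$: for example $\mathbb{F}_2$ splits $S_3$, yet $\mathbb{F}_2[C_3]\cong\mathbb{F}_2\times\mathbb{F}_4$. If you do want to reduce to $p$-elementary subgroups, the correct mechanism is Artin induction in $K_0(\mathrm{Rep}_R(-))$, whose denominators are prime to $p$, acting through Frobenius reciprocity.

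\textbf{Step 3.} The claim that $\Delta_p(G)$ is $G$-homotopy equivalent to $G/H$ is false in general. $H$ can be strictly larger than the stabiliser of a component: for $G=A_6$, $p=5$, $H=A_5$, the complex $\Delta_5(G)$ is $36$ points while $G/H$ has $6$. Moreover, components of $\Delta_p(G)$ need not be contractible.

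Your alternative, applying Webb's theorem directly to the $0$-dimensional complex $G/H$, is the right argument; the paper's proof does not spell this part out. You must, however, check that $(G/H)^E$ is a single point for each $E\in\mathcal{X}\setminus\mathcal{Y}$:
\begin{itemize}
\item Existence: $O_p(E)\neq 1$ lies in some conjugate $H^g$, and strong embedding gives $E\le N_G(O_p(E))\le H^g$.
\item Uniqueness: $p$ divides $|H^g\cap H^{g'}|$ only if $gH=g'H$.
\end{itemize}
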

\begin{proof}
    We apply \cite[Theorem A]{webb1991split} with the Mackey functor $\tc_i(R[-])$ and the complex $\Delta_p(G)$. We take $\mathcal{X}$ to be the class of Brauer elementary subgroups and $\mathcal{Y}$ the class of subgroups with order prime to $p$. By Corollary \ref{trace-inv-defect}, $\tc_i(R[-];p)^\wedge_p$ is $\mathcal{X}$-projective. For $H \in \mathcal{Y}$, the group algebra $\mathbb{F}_{p^n}[H]$ is derived Morita equivalent to a product of copies of $\mathbb{F}_{p^n}$, hence $R[H]$ is derived Morita equivalent to a product of copies of $R$. The groups $\tc_i(R)^\wedge_p$ are $0$ for $i>0$ (\cite[Example 6.8]{clausen2021}), so $\tc_i(R[H];p)^\wedge_p=0$. 
    
    It remains to show that for $H \in \mathcal{X}-\mathcal{Y}$, the space $\Delta_p(G)^H$ is contractible. But by assumption, $H \cong C_n \times Q$ and does not have order prime to $p$, which implies that $H\cong P\times H'$ for some non-trivial $p$-group $P$. But there is an equivariant homotopy $\Delta_p(G)^P\simeq *$ by the proof of \cite[Proposition 4.1]{quillen1978homotopy}, so $\Delta_p(G)^{P\times H'}\simeq *$ as required.
\end{proof}

Proposition \ref{p-local} allows us to reduce the calculation of $\tc_i(R[G];p)^\wedge_p$ to $\tc_i(R[H];p)^\wedge_p$ for subgroups $H$ which normalise a chain of $p$-subgroups. Such methods have been used extensively in group cohomology, particularly in the calculation of cohomology of simple groups (\cite{webb1987local}, \cite{adem1991geometry}). 

\begin{example}\label{ex-lie-type}
    Suppose that $G$ is a finite group of Lie type.  The proof of Proposition \ref{p-local} still holds if we replace $\Delta_p(G)$ by a $G$-homotopy equivalent simplicial complex, and by \cite[Theorem 3.1]{quillen1978homotopy} $\Delta_p(G)$ is equivalent to the building of $G$.
\end{example}

\bibliography{main.bib}{}
\bibliographystyle{alpha}

\end{document}